\numberwithin{equation}{section}
\newtheorem{theorem}{Theorem}[section]
\newtheorem{lemma}[theorem]{Lemma}
\newtheorem{prop}[theorem]{Proposition}
\newtheorem*{con7*}{Conjecture 7*}
\newtheorem{Property}[theorem]{Property}
\newtheorem{Example}[theorem]{Example}
\newtheorem*{Conjecture}{Conjecture}
\newtheorem*{Main Theorem}{Main Theorem}
\theoremstyle{definition}
\newtheorem{Definition}[theorem]{Definition}
\theoremstyle{remark}
\newtheorem*{Remark}{Remark}
\numberwithin{equation}{theorem}
\DeclareMathOperator{\wgd}{w.gl.dim}
\DeclareMathOperator{\Tor}{Tor}
\DeclareMathOperator{\Spec}{Spec}
\DeclareMathOperator{\Max}{Max}
\DeclareMathOperator{\Ker}{Ker}
\DeclareMathOperator{\Img}{Im}
\begin{document}

\title[Bazzoni-Glaz Conjecture]{Bazzoni-Glaz Conjecture}

\author{Guram Donadze \and Viji Z. Thomas}
 \address{G. Donadze\\ Department of Algebra, University of Santiago de Compostela,
15782, Spain.}
 \email {gdonad@gmail.com}
\address{V.Z. Thomas \\ School of Mathematics, Tata Institute of Fundamental Research,
Mumbai, Maharashtra 400005, India.}
\email{vthomas@math.tifr.res.in}
\subjclass[2010]{Primary 13B25,13D05,13F05,16E30 Secondary 16E65}
\keywords{Gaussian rings, Pr\"ufer rings, weak dimension, content, non-noetherian rings}
\begin{abstract} In \cite{BG}, Bazzoni and Glaz conjecture that the weak global
dimension of a Gaussian ring is $0,1$ or $\infty$. In this paper, we prove their
conjecture.
\end{abstract}

\maketitle

\section{Introduction}
In her Thesis \cite{T}, H. Tsang, a student of Kaplansky introduced Gaussian rings.
Noting that the content of a polynomial $f$ over a commutative ring $R$ is the ideal
$c(f)$ generated by the coefficients of $f$, we now define a Gaussian ring.

\begin{Definition}[Tsang, 1965 \cite{T}]
A polynomial $f\in R[x]$ is called Gaussian if $c(f)c(g)=c(fg)$ for all $g\in R[x]$.
The ring $R$ is called Gaussian if each polynomial in $R[x]$ is Gaussian.
\end{Definition}

In her thesis, Tsang determined conditions under which a polynomial is Gaussian. In particular, she proved

\begin{theorem}[Tsang 1965 \cite{T}]
Let $R$ be a commutative ring and let $f\in R[x]$ be a polynomial in one variable over $R$. If $c(f)$ is an invertible ideal, or more generally, locally a principal ideal, then $f$ is a Gaussian polynomial.
\end{theorem}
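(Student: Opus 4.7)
The plan is to prove the two inclusions $c(fg)\subseteq c(f)c(g)$ and $c(f)c(g)\subseteq c(fg)$ separately. The first holds for arbitrary $f$ and $g$ without any hypothesis: the $k$-th coefficient of $fg$ equals $\sum_{i+j=k}a_ib_j$, where the $a_i$ and $b_j$ are the coefficients of $f$ and $g$, and such a sum visibly lies in $c(f)c(g)$.

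For the reverse inclusion the hypothesis on $c(f)$ must enter, and the strategy is to reduce to a local situation. Since content commutes with localization and ideal containment in $R$ can be tested after localizing at each maximal ideal $\mathfrak{m}\subset R$, it suffices to prove the inclusion in $R_\mathfrak{m}[x]$. By hypothesis $c(f)R_\mathfrak{m}$ is a finitely generated principal ideal of the local ring $R_\mathfrak{m}$. A Nakayama-style observation then yields that whenever $(a_0,\ldots,a_n)$ is principal in a local ring, one of the $a_i$ already generates it (write $b=\sum r_ia_i$ and $a_i=s_ib$; if every $s_i\in\mathfrak{m}$ then $I=\mathfrak{m}I$ forces $I=0$, otherwise some $s_k$ is a unit and $(a_k)=(b)$). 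Applied to the coefficients of $f$, this gives $c(f)R_\mathfrak{m}=(a_k)$ for some coefficient $a_k$ of $f$.

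I would then write $a_i=s_ia_k$ with $s_k=1$ and factor $f=a_kh$ in $R_\mathfrak{m}[x]$ with $h=\sum s_ix^i$, so that $c(h)\ni 1$ and hence $c(h)=R_\mathfrak{m}$. The claim in $R_\mathfrak{m}$ now reduces to the statement that multiplication by a unit-content polynomial preserves content, that is, $c(hg)=c(g)$ whenever $c(h)=R_\mathfrak{m}$. Granting this, the computation
\[
c(fg)R_\mathfrak{m}=c(a_khg)=a_kc(hg)=a_kc(g)=c(f)c(g)R_\mathfrak{m}
\]
closes the argument after delocalizing.

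The main obstacle is therefore the content identity $c(hg)=c(g)$ for unit-content $h$. One direction is again immediate, and for the other I would invoke the Dedekind--Mertens content formula $c(h)^{m+1}c(g)=c(h)^mc(hg)$ with $m=\deg g$, which specializes to $c(g)=c(hg)$ in the unit-content case. Dedekind--Mertens itself is proved by induction on $\deg g$ through a careful tracking of coefficient ideals; this inductive step is where the real work lies, but the result is classical and may be cited directly.
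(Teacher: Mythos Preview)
The paper does not include a proof of this theorem; it is stated in the introduction as a result of Tsang and serves only as background for the discussion of Kaplansky's conjecture and the Bazzoni--Glaz conjecture. There is therefore nothing in the paper to compare your argument against.

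That said, your proof is correct and follows the classical line. Localizing at maximal ideals is legitimate because content commutes with localization and ideal equality is a local property; the Nakayama-type step showing that a principal ideal $(a_0,\dots,a_n)$ in a local ring is generated by one of the $a_i$ is correct (with the minor remark that your phrase ``$I=\mathfrak{m}I$ forces $I=0$'' is really an application of Nakayama to the finitely generated, in fact principal, ideal $I$); and the factorization $f=a_kh$ with $c(h)=R_{\mathfrak{m}}$ followed by Dedekind--Mertens is exactly the standard device. Since invertible ideals are locally principal, both hypotheses in the statement are covered simultaneously by your reduction.
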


The converse of this statement has received considerable interest in the recent past and is related to the following conjecture of Kaplansky.
\begin{Conjecture}[Kaplansky]
Let $R$ be a commutative ring and let $f\in R[x]$ be a Gaussian polynomial. Then
$c(f)$ is an invertible or, at least, a locally principal ideal.
\end{Conjecture}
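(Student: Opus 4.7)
The plan is to work locally, since both being Gaussian and being locally principal are local properties; so we may assume $R$ is local with maximal ideal $\mathfrak{m}$, and the goal reduces to showing that $c(f)$ is principal. Write $f=\sum_{i=0}^n a_i x^i$ and set $I=c(f)=(a_0,\dots,a_n)$. The most natural first tool is the Dedekind--Mertens lemma $c(f)^{\deg g+1}c(g)=c(f)^{\deg g}c(fg)$, but under the Gaussian hypothesis $c(fg)=c(f)c(g)$ this identity collapses to a tautology, so it extracts no new information by itself. The substantive leverage must therefore come from testing $c(f)c(g)=c(fg)$ against cleverly chosen $g$: reading off the coefficient of $x^k$ in $fg$ yields the Sylvester-type relation $\sum_j a_{k-j}b_j\in c(f)c(g)$, and iterating these I would hope to obtain, modulo $\mathfrak{m}I$, relations of the form $a_i\equiv \lambda_i a_{i_0}$ with $\lambda_i\in R$ and some distinguished index $i_0$; Nakayama's lemma would then give $I=(a_{i_0})$.

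A complementary strategy is proof by contradiction through rank. If $I$ is not principal, pick $a_i,a_j$ whose images in $I/\mathfrak{m}I$ are linearly independent over the residue field, and try to construct an explicit $g$ such that the Sylvester-type matrix attached to $c(fg)$ has rank strictly larger than what $c(f)c(g)$ can accommodate. This would convert the conjecture into a linear-algebra assertion over $R/\mathfrak{m}$, which in the presence of additional structure (e.g.\ when the total ring of quotients of $R$ is local, or when the nilradical vanishes) should be accessible.

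The main obstacle, and the reason this is a notorious open problem, is that the Gaussian condition equates only \emph{ideals} of coefficients and discards the finer data of individual coefficients; zero-divisors in $R$ can conspire to produce $c(fg)=c(f)c(g)$ for reasons invisible to any direct extraction of a principal generator. Accordingly I would not expect a clean proof in full generality: a realistic first target is the case in which $c(f)$ contains a non-zero-divisor, where local principality follows from Lucas-type arguments on regular finitely generated ideals in Gaussian rings, followed by the case in which the zero-divisors of $R$ form a nilpotent ideal, bootstrapping from there via faithfully flat descent toward the general statement.
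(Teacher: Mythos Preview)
The statement you are attempting to prove is not a theorem of the paper; it is recorded there as a \emph{conjecture} of Kaplansky, and the paper does not claim to prove it. On the contrary, the paper explicitly notes that ``In general, Kaplansky's conjecture is false (see \cite{GV}, \cite{GV1}).'' So there is no ``paper's own proof'' to compare against, and any unconditional proof attempt is doomed: counterexamples exist. Your own closing paragraph hints that you sensed this, since you retreat to special cases (a non-zero-divisor in $c(f)$, nilpotent zero-divisors), but the opening paragraphs are phrased as if a general argument via Nakayama and Sylvester-type relations might succeed. It cannot: the very obstacle you identify---that the Gaussian condition equates only ideals and that zero-divisors can conspire---is not merely a difficulty but the actual source of counterexamples.

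Concretely, the gap is that neither of your two strategies closes. In the first, the passage from the coefficient relations $\sum_j a_{k-j}b_j \in c(f)c(g)$ to congruences $a_i \equiv \lambda_i a_{i_0} \pmod{\mathfrak{m}I}$ is precisely what fails over rings with enough zero-divisors; you have no mechanism to produce the $\lambda_i$. In the second, the rank argument over $R/\mathfrak{m}$ ignores that $I/\mathfrak{m}I$ can have dimension $\geq 2$ while $f$ remains Gaussian, again because of zero-divisors. The paper's actual contribution is to the Bazzoni--Glaz conjecture on the weak global dimension of Gaussian \emph{rings}, a different (and true) statement; Kaplansky's conjecture on individual polynomials is mentioned only as historical context.
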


A number of authors contributed to the solution of the above conjecture of Kaplansky. D.D. Anderson and Kang (\cite{AK}) initiated the renewed interest in Kaplansky's conjecture; Glaz and Vasconcelos (\cite{GV}, \cite{GV1}) showed that the conjecture holds for integrally closed Noetherian domains; Heinzer and Huneke (\cite{HH}) proved that the conjecture holds for Noetherian domains; Loper and Roitman (\cite{LR}) solved the conjecture for all domains; Lucas (\cite{L}, \cite{L1}) extended this result to a partial answer for non-domains. In general, Kaplansky's conjecture is false (see \cite{GV}, \cite{GV1}).

  A Pr\"ufer (see Definition \ref{pru}) domain is a type of commutative ring that generalizes Dedekind domains in a non-Noetherian context, in particular, a Noetherian Pr\"ufer domain is a Dedekind domain. Among other things, H. Tsang (\cite{T}) proved that an integral domain is Gaussian if and only if it is Pr\"ufer, a result also proved independently by R. Gilmer in \cite{RG}. Thus Gaussian rings provide another class of rings extending the class of Pr\"ufer domains to rings with zero divisors. A homological generalization to rings $R$ of the Pr\"ufer property for domains, is the condition: $R$ has weak global dimension less or equal to one, $\wgd \leq 1$. A subfamily of this class of rings is the class of semihereditary rings. Recall that a ring is semihereditary if every finitely generated ideal of $R$ is projective. Semihereditary rings $R$ are precisely those rings $R$ with $\wgd\leq 1$ which are coherent. Domains of weak global dimension less or equal to 1, in particular, semihereditary domains, are Pr\"ufer domains. A lot of work and progress has been made in investigating Pr\"ufer like conditions in commutative rings in the last 15 years, this can be found in the survey article \cite{GS}. To give a homological characterization to several classes of rings has been a topic of research for several years, for example a ring $R$ is semi-simple if and only if the global dimension of $R$ is $0$ (\cite{SG1}). Similarly, $\wgd R=0$ if and only if $R$ is a Von-Nuemann regular ring (see \cite{H}). We note that it follows from Osofsky \cite{O} that arithmetical rings have weak global dimension at most one or $\infty$. In the same vein, the author of \cite{SG} is concerned with giving a homological characterization to Gaussian rings. She considers the question: For a Gaussian ring $R$, what possible values may $\wgd R$ take? In \cite{BG}, the authors consider five possible extensions of the Pr\"ufer domain notion to the case of commutative rings with zero divisors, two among which are Gaussian rings and rings with weak global dimension (see Definition \ref{weak}) at most one. They consider the problem of determining the possible values for the weak global dimension of a Gaussian ring. At the end of their paper, they make the following conjecture.
\begin{Conjecture}[Bazzoni-Glaz, 2007 \cite{BG}]
The weak global dimension of a Gaussian ring is either $0,1$ or $\infty$.
\end{Conjecture}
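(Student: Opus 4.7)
We plan to prove the conjecture by contradiction: suppose $R$ is a Gaussian ring with $2 \le \wgd R < \infty$, and derive a contradiction.

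The first step is a standard localization reduction. Since weak global dimension satisfies $\wgd R = \sup_{\mathfrak{m} \in \Max(R)} \wgd R_{\mathfrak{m}}$, and since Bazzoni-Glaz show that the Gaussian property is preserved under localization, we may assume that $R$ is a local Gaussian ring with $2 \le \wgd R < \infty$. All subsequent work is in this local setting.

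The second step is to eliminate the reduced case. Using the structural results of Bazzoni-Glaz on local Gaussian rings, combined with Tsang's theorem identifying Gaussian domains with Pr\"ufer domains, a reduced local Gaussian ring turns out to be a valuation domain and hence satisfies $\wgd R \le 1$. This contradicts $\wgd R \ge 2$, so $R$ must contain a nonzero nilpotent element.

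The crux of the argument is to extract, from the Gaussian structure on $R$ together with the existence of nilpotents, an element $a \in R$ with $a \ne 0$, $a^{2}=0$, and $\text{ann}_R(a) = aR$. With such an $a$, the sequence $\cdots \xrightarrow{a} R \xrightarrow{a} R \xrightarrow{a} R \to R/aR \to 0$ is a free resolution of $R/aR$, and tensoring with $R/aR$ gives $\Tor^R_i(R/aR,R/aR) \cong R/aR \neq 0$ for every $i \ge 0$. This forces $\wgd R = \infty$, yielding the desired contradiction.

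The main obstacle is the production of $a$. The Gaussian hypothesis $c(fg) = c(f)c(g)$ translates into subtle ideal identities such as $(x,y)^{2} = (x^{2},y^{2})$, which constrain multiplication among elements of $R$ only indirectly. Converting these constraints into the precise annihilator equality $\text{ann}_R(a) = aR$ with $a^{2}=0$ is not immediate: I expect to need an iterative selection argument, starting from an arbitrary nonzero nilpotent and successively replacing it (by suitable multiples, powers, or elements obtained from Gaussian identities) until the annihilator becomes principal and generated by $a$ itself. This delicate refinement, carried out inside the local Gaussian structure, is where the technical heart of the proof will sit.
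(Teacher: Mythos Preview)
Your reduction steps (localization to a local Gaussian ring, elimination of the reduced case) match the paper's and are fine. The gap is in the ``crux'': the element $a$ you are hoping to construct---nonzero with $a^{2}=0$ and $(0:a)=aR$---simply does not exist in general, so no amount of iterative refinement will produce it.

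A concrete witness is the paper's Example~\ref{EG}: take $R=S^{-1}\bigl(\mathbf{k}[X,Y]/(XY,Y^{2})\bigr)$, where $S$ is the set of elements with nonzero constant term. This is a non-reduced local Gaussian ring with maximal ideal $\mathfrak{m}=(x,y)$ and $D=\{r:r^{2}=0\}=yR$. One checks directly that $yR=\{\lambda y:\lambda\in\mathbf{k}\}$ and that $(0:y)=\mathfrak{m}$. Hence every nonzero square-zero element $d$ is a nonzero scalar multiple of $y$, and for each such $d$ one has $(0:d)=\mathfrak{m}\supsetneq D=dR$. So there is no $a$ with $a^{2}=0$ and $(0:a)=aR$, and the periodic two-term resolution you want to invoke is unavailable. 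This is precisely the case $\mathcal{N}^{2}=0$ that makes the conjecture delicate; the paper devotes all of Section~6 to it.

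What the paper actually does is quite different from a single periodic resolution. After the same localization/reduction, it splits according to the nilradical $\mathcal{N}$ of the local ring. If $\mathcal{N}\neq D$ one can localize further at $\mathcal{N}$ to make the maximal ideal nil and nonzero (Lemma~\ref{lemma5.1}), and then either Proposition~\ref{prop2.1} (nilpotent case) or Theorem~\ref{thm1} (non-nilpotent case) gives $\wgd=\infty$. The hard remaining case $\mathcal{N}=D\neq 0$ is handled by an intricate chain of Tor computations (Lemmas~\ref{adlemma1}--\ref{adlemma8}): assuming $\wgd R=n<\infty$, one repeatedly passes to suitable localizations to force conditions (C1)--(C3), then shows that $\Tor_{n-1}(R/(a\mathfrak{m}+D),R/D)=0$ for some $a\in\mathfrak{m}\setminus D$ while simultaneously $\Tor_{n-1}(R/(aR+D),R/D)\neq 0$, and a short exact sequence comparison yields the contradiction. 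None of this is captured by the existence of a single self-annihilating element.
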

Our aim in this article is to give a proof of the above conjecture. The above conjecture is also listed as an open question in the recent survey article \cite{GS}. In a recent paper \cite{AJK}, the authors have validated the Bazzoni-Glaz conjecture
for the class of rings called fqp-rings. The class of fqp-rings fall strictly between the classes of arithmetical rings and Gaussian rings. In \cite{SG}, the author shows that the weak global dimension of a coherent Gaussian ring is either $\infty$ or at most one. She also shows that the weak global dimension of a Gaussian ring is at most one if and only if it is reduced. So to prove the conjecture it is enough to show that $\wgd R=\infty$ for all non-reduced Gaussian rings $R$. Since $\wgd R=\sup\{\wgd R_{\mathfrak{p}}\mid \mathfrak{p}\in \Spec (R)\}$, it is enough to prove the conjecture for non-reduced local Gaussian rings. For any non reduced local Gaussian ring $R$ with nilradical $\mathcal{N}$, either $(i)$ $\mathcal{N}$ is nilpotent or $(ii)$ $\mathcal{N}$ is not nilpotent. Except when $\mathcal{N}^2=0$, the authors of \cite{BG} prove that if $R$ satisfies $(i)$, then $\wgd R=\infty$. In this paper we prove that if $R$ satisfies $(ii)$, then $\wgd R=\infty$ (cf. Theorem \ref{main}). We also give a complete proof of $(i)$. Now we briefly describe the strategy of the proof. After localization at minimal prime ideal, $\mathcal{N}$, the maximal ideal and the nilradical of $R_{\mathcal{N}}$ coincide, let us denote it by $\mathcal{N'}$. If $\mathcal{N}^2\neq 0$, then $\mathcal{N'}\neq 0$. Hence to prove the Bazzoni-Glaz conjecture in this case, it suffices to show that the weak global dimension of a local Gaussian ring with the maximal ideal coinciding with the nilradical is infinite. If $\mathcal{N}^2=0$, then $\mathcal{N'}$ can be zero (see Example \ref{EG}). Hence we discuss this case separately in Section 6.

In Section 3, we consider some homological properties of local Gaussian rings. In
particular we consider local Gaussian rings $(R,\mathfrak{m})$ which are not fields,
with the property that each element of $\mathfrak{m}$ is a zero divisor. In this
case we prove that $\wgd R\geq 3$.

In  \cite[Section 6]{BG}, the authors consider local Gaussian rings $(R,
\mathfrak{m})$ such that the maximal ideal $\mathfrak{m}$ coincides with the
nilradical of $R$. With this set up in Section 4, we prove that if $\mathcal{N}$ is
not nilpotent, then $\wgd R=\infty$.

In Section 5, we prove the conjecture except the case $\mathcal{N}^2=0$, and in the last
section we prove it completely.

Throughout this paper, $R$ is a commutative ring with unit, $(R,\mathfrak{m})$ is a
local ring(not necessarily Noetherian) with unique maximal ideal $\mathfrak{m}$. For $R$-modules $M$ and $N$, we write $M\otimes N$ and $\Tor_*(M,N)$ instead of $M\otimes_R N$, and $\Tor_*^R(M, N)$, respectively. We denote the set of all prime ideals of $R$ by $\Spec(R)$ and the set of all maximal ideals by $\Max(R)$.

\section{Preliminary Results}

In this section we will recall some definitions and results that we will need in
later sections.

\begin{Definition}[\cite{SG1}]
Let $M$ be a module over $R$. The weak global dimension of $M$ (denoted by $\wgd_R(M)$)
is the minimum integer (if it exists) such that there is a resolution of $M$ by flat $R$
modules $0\to F_n\to \cdots\to F_1\to F_0\to M\to 0$. If no finite resolution by flat $R$
modules exists for $M$, then we set $\wgd_R(M)=\infty$.
\end{Definition}

Now we define the weak global dimension of a ring $R$ denoted as $\wgd(R)$. It is
also sometimes suggestively called as the $\Tor$-dimension.

\begin{Definition}[\cite{SG1}]\label{weak}
$\wgd(R)$=$\sup$\{$\wgd_R(M) \mid M$ is an $R$-module\}.
\end{Definition}

Recall that $\wgd(R)$=$\sup$\{$d \mid \Tor_d(M,N)\neq 0$ for some $R$-modules
$M,N$\}. The $\wgd(R)\leq 1$ if and only if every ideal of $R$ is flat, or
equivalently, if and only if every finitely generated ideal of $R$ is flat.

We now define a Pr\"ufer domain.

\begin{Definition}[Pr\"ufer 1932 \cite{P}]\label{pru}
An integral domain $R$ is called a Pr\"ufer domain if every finitely generated non-zero ideal of $R$ is invertible.
\end{Definition}

L. Fuchs introduced the class of arithmetical rings in \cite{F}.

\begin{Definition}[Fuchs 1949 \cite{F}]
A ring $R$ is arithmetical if the lattice of the ideals of $R$ is distributive.
\end{Definition}

 In \cite{J}, the author characterized arithmetical rings by the property that in every localization at a maximal ideal, the lattice of the ideals is linearly ordered by inclusion. Hence in a local arithmetical ring, the lattice of the ideals is linearly ordered by inclusion. Thus local arithmetical rings provide another class of rings extending the class of valuation domains to rings with zero-divisors.

The next theorem appears in Tsang's (see \cite{T}) unpublished thesis.

\begin{theorem}[Tsang 1965 \cite{T}]\label{thm2.1}
Let $R$ be a Gaussian ring. If $R$ is local, then
\begin{itemize}
  \item[(i)] $R$ is Gaussian if and only if $R_{\mathfrak{m}}$ is Gaussian for all
$\mathfrak{m}\in \Max(R)$;
   \item[(ii)] $R$ is Gaussian if and only if $R_{\mathfrak{p}}$ is Gaussian for all
$\mathfrak{p}\in \Spec(R)$;
   \item[(iii)] the prime ideals of $R$ are linearly ordered under inclusion; and
 \item[(iv)] the nilradical of $R$ is the unique minimal prime ideal of $R$.
\end{itemize}
\end{theorem}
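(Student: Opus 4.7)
The plan is to prove the four items in the order (i), (ii), (iv), (iii), because (iv) is what makes the passage to $R/\mathcal{N}$ legitimate and drives (iii). Two engines do essentially all the work: the formal fact that content commutes with localization, $c_{S^{-1}R}(S^{-1}f)=S^{-1}c_{R}(f)$ for any multiplicative set $S\subseteq R$, and a single clever choice of polynomials that forces zero-divisors to be nilpotent.

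For (i), the forward direction is immediate from the localization formula: given $f',g'\in R_{\mathfrak{m}}[x]$, clear denominators to realize them as images of some $f,g\in R[x]$, apply $c(fg)=c(f)c(g)$ in $R$, and localize. For the converse, the inclusion $c(fg)\subseteq c(f)c(g)$ always holds, and the opposite containment is an equality of finitely generated ideals, which can be checked after localization at each maximal ideal. Part (ii) follows by the same argument in the forward direction, while for the converse one combines (i) with $\Max(R)\subseteq \Spec(R)$.

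For (iv) the substantial step is the lemma: in a local Gaussian ring, $ab=0$ with $a,b\neq 0$ forces $a$ or $b$ to be nilpotent. Set $f=a+bx$ and $g=b+ax$; expansion gives $fg=ab+(a^2+b^2)x+abx^2$, which under $ab=0$ reduces to $(a^2+b^2)x$, so $c(fg)=(a^2+b^2)$, while $c(f)c(g)=(a,b)^2=(a^2,ab,b^2)=(a^2,b^2)$. The Gaussian identity therefore yields $(a^2,b^2)=(a^2+b^2)$, and writing $a^2=\alpha(a^2+b^2)$ gives $(1-\alpha)a^2=\alpha b^2$. Since $R$ is local, either $\alpha$ or $1-\alpha$ is a unit, so either $b^2\in(a^2)$ or $a^2\in(b^2)$; in the first case $b^2=ca^2$ gives $b^3=c(ba)a=0$, and symmetrically in the second case $a^3=0$. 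The upgrade to ``$ab$ nilpotent $\Rightarrow$ $a$ or $b$ nilpotent'' is then automatic by applying the lemma to $a^n,b^n$ whenever $(ab)^n=0$. Hence $\mathcal{N}$ is prime, and since every prime of $R$ contains $\mathcal{N}$, $\mathcal{N}$ is the unique minimal prime.

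For (iii), by (iv) the quotient $R/\mathcal{N}$ is a local Gaussian integral domain. By Tsang's equivalence between Gaussian and Pr\"ufer for domains, quoted in the introduction, $R/\mathcal{N}$ is a local Pr\"ufer domain, hence a valuation domain, whose ideals and in particular prime ideals are linearly ordered under inclusion. Since every prime of $R$ contains $\mathcal{N}$, the primes of $R$ correspond order-preservingly to the primes of $R/\mathcal{N}$, so they too are linearly ordered. The main obstacle is isolating the right polynomial pair in (iv): one has to arrange $c(fg)$ and $c(f)c(g)$ so that comparing them exposes enough relations among $a^2,ab,b^2$ to pin down nilpotency. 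Once that single identity is in hand, the remaining work is routine localization and quotient bookkeeping.
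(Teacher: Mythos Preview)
The paper does not supply its own proof of this theorem: it is stated in Section~2 as a preliminary result from Tsang's thesis \cite{T} and is simply quoted, so there is no in-paper argument to compare yours against. That said, your proof is correct and self-contained. Parts (i) and (ii) are handled by the standard fact that content commutes with localization together with the local nature of equality for finitely generated ideals. For (iv), your choice $f=a+bx$, $g=b+ax$ cleanly forces $(a^2,b^2)=(a^2+b^2)$ when $ab=0$, and the local hypothesis then gives $a^3=0$ or $b^3=0$; the lift from ``$ab=0$'' to ``$ab$ nilpotent'' via $a^n,b^n$ is fine (the edge cases $a^n=0$ or $b^n=0$ already give nilpotency). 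For (iii), you pass to $R/\mathcal{N}$, invoke the Gaussian--Pr\"ufer equivalence for domains from the introduction, and use that a local Pr\"ufer domain is a valuation domain; you are implicitly using that the Gaussian property descends to quotient rings, which is immediate since content in $R/I$ is the image of content in $R$. One small stylistic remark: the paper's Theorem~\ref{thm2.2}(iv) would give the conclusion $(a,b)^2=(a^2)$ or $(b^2)$ directly, bypassing the $f,g$ computation, but since Theorem~\ref{thm2.2} is also quoted without proof this is not an independent shortcut.
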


We will need several equivalent characterizations of local Gaussian rings, which we
now state.

\begin{theorem}[\cite{T}, \cite{L}]\label{thm2.2}
Let $(R,\mathfrak{m})$ be a local ring with maximal ideal $\mathfrak{m}$. The
following conditions are equivalent.
\begin{itemize}
  \item[(i)] $R$ is a Gaussian ring;
   \item[(ii)] If $I$ is a finitely generated ideal of $R$ and $(0:I)$ is the
annihilator of $I$, then $I/{I\cap(0:I)}$ is a cyclic $R$-module;
   \item[(iii)] Condition $(ii)$ for two generated ideals;
 \item[(iv)] For any two elements $a,b\in R$, the following two properties hold:
 \begin{itemize}
   \item[(a)] $(a,b)^2=(a^2)$ or $(b^2)$;
  \item[(b)] If $(a,b)^2=(a^2)$ and $ab=0$, then $b^2=0$.
 \end{itemize}
 \item[(v)] If $I=(a_1,a_2,\ldots,a_n)$ is a finitely generated ideal of $R$, then
$I^2=(a_{i}^2)$ for some $1\leq i\leq n$.
\end{itemize}
\end{theorem}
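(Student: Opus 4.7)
The strategy is to prove these five conditions equivalent via a cycle $\mathrm{(i)} \Rightarrow \mathrm{(iv)} \Rightarrow \mathrm{(iii)} \Rightarrow \mathrm{(ii)} \Rightarrow \mathrm{(i)}$, supplemented by the equivalence $\mathrm{(iv)} \Leftrightarrow \mathrm{(v)}$. Throughout, the locality of $R$ will be exploited through the fact that a sum of finitely many elements of $R$ is a unit if and only if at least one of them is a unit.

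For $\mathrm{(i)} \Rightarrow \mathrm{(iv)}$, I will apply the defining identity $c(h_1 h_2) = c(h_1) c(h_2)$ to well-chosen linear polynomials in $a$ and $b$. Taking $h_1 = a + bx$ and $h_2 = b + ax$ yields $(a,b)^2 = (ab,\, a^2 + b^2)$, and a careful comparison of generators inside the local ring $R$ produces $\mathrm{(iv)(a)}$. Part $\mathrm{(iv)(b)}$ requires a further polynomial identity that exploits the hypothesis $ab = 0$ together with the fact that under the assumption $(a,b)^2 = (a^2)$ the ideal $(a,b)^2$ is principal.

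For $\mathrm{(iv)} \Rightarrow \mathrm{(iii)}$, given $I = (a, b)$ with $(a,b)^2 = (a^2)$ (swapping $a, b$ if needed by $\mathrm{(iv)(a)}$), write $ab = sa^2$ and $b^2 = ra^2$, and set $c = b - sa$. A short computation shows $ac = ab - sa^2 = 0$, and since $(a, c) = (a, b)$, we have $(a, c)^2 = (a^2)$; applying $\mathrm{(iv)(b)}$ to the pair $(a, c)$ then forces $c^2 = 0$, whence $bc = sac + c^2 = 0$ and $c \in (0:I)$. Therefore $b \equiv sa \pmod{(0:I)}$, showing that $I/(I \cap (0:I))$ is cyclic on the image of $a$. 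The step $\mathrm{(iii)} \Rightarrow \mathrm{(ii)}$ is an induction on the number of generators of $I$, using $\mathrm{(iii)}$ applied to pairs of generators to successively absorb the discrepancy into the annihilator of the ambient ideal.

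For $\mathrm{(ii)} \Rightarrow \mathrm{(i)}$, the key observation is that writing any finitely generated $I = (a) + J$ with $J \subseteq (0:I)$ yields $aJ = 0$ and $J^2 \subseteq J \cdot I = 0$, so $I^2 = (a^2)$; applied to $I = c(f)$, this shows that $c(f)^2$ is principal for every $f \in R[x]$. The Dedekind--Mertens formula $c(f)^{\deg g + 1} c(g) = c(f)^{\deg g} c(fg)$ then permits cancellation and yields the Gaussian identity $c(fg) = c(f) c(g)$. Finally, the equivalence $\mathrm{(iv)} \Leftrightarrow \mathrm{(v)}$ is established by noting that $\mathrm{(v)}$ with $n = 2$ is $\mathrm{(iv)(a)}$, while $\mathrm{(iv)}$ implies $\mathrm{(v)}$ by induction on the number of generators, using $\mathrm{(iv)(b)}$ to kill the cross terms at each stage. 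The anticipated main obstacles are the precise polynomial identity needed to derive $\mathrm{(iv)(b)}$ from the Gaussian property, and the Dedekind--Mertens cancellation step in the presence of zero divisors, where one must carefully leverage the principality of $c(f)^2$ rather than of $c(f)$ itself.
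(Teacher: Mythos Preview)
The paper does not prove Theorem~\ref{thm2.2}; it merely states it and attributes it to Tsang~\cite{T} and Lucas~\cite{L}, noting explicitly that ``the implication $(iv)\Rightarrow (i)$ was noted by Lucas in~\cite{L} and the rest of Theorem~\ref{thm2.2} was proved by Tsang in~\cite{T}.'' There is therefore no proof in the paper against which to compare your proposal.

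That said, your plan is broadly in line with how these results are proved in the cited sources. A few remarks. Your route $(\mathrm{ii})\Rightarrow(\mathrm{i})$ via Dedekind--Mertens is the delicate step, as you anticipate: from $(\mathrm{ii})$ you correctly extract $c(f)^2=(a^2)$, but cancellation in $c(f)^{m+1}c(g)=c(f)^m c(fg)$ requires care since $a$ may be a zero-divisor. One clean way around this is to observe that your argument for $(\mathrm{ii})\Rightarrow I^2=(a^2)$ already gives $(\mathrm{v})$ directly, and then to prove $(\mathrm{v})\Rightarrow(\mathrm{i})$ (or $(\mathrm{iv})\Rightarrow(\mathrm{i})$) by a direct coefficient computation rather than by Dedekind--Mertens cancellation; this is essentially Lucas's contribution. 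Also note that $(\mathrm{v})$ for $n=2$ gives only $(\mathrm{iv})(\mathrm{a})$, so the direction $(\mathrm{v})\Rightarrow(\mathrm{iv})$ in your final paragraph still needs a separate argument for $(\mathrm{iv})(\mathrm{b})$; this is easy (if $(a,b)^2=(a^2)$ and $ab=0$, write $b^2=ra^2$ and square to get $b^4=r^2a^4$, but also $b^2\in (a,b)^2=(a^2)$ combined with $ab=0$ forces $b^3=rab\cdot a=0$, etc.), but should be mentioned.
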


The implication $(iv)\Rightarrow (i)$ was noted by Lucas in \cite{L} and the rest of
Theorem \ref{thm2.2} was proved by Tsang in \cite{T}. The next two results can be
found in \cite{BG}.

\begin{theorem}[Bazzoni-Glaz 2007 \cite{BG}]\label{thm2.3}
Let $(R,\mathfrak{m})$ be a local Gaussian ring and let $D=\{x\in R \mid x^2=0\}$.
The following hold:
\begin{itemize}
  \item[(i)] $D$ is an ideal of $R$, $D^2=0$, and $R/D$ is an arithmetical ring;
   \item[(ii)] For every $a\in R$, $(0:a)$ and $D$ are comparable and $D\subseteq
Ra+(0:a)$;
   \item[(iii)] If $a\in \mathfrak{m}\setminus D$, then $(0:a)\subseteq D$;
 \item[(iv)] Let $\mathfrak{m}$ be the nilradical of $R$. If $\mathfrak{m}$ is not
nilpotent, then $\mathfrak{m}=\mathfrak{m^2}+D$ and
$\mathfrak{m^2}=\mathfrak{m^3}$.
 \end{itemize}
\end{theorem}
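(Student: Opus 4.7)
The plan is to prove parts (i), (iii), (ii), and (iv) in that order, since (iii) feeds directly into (ii) and (iv) is by far the most delicate. The whole argument rests on repeated application of the two identities in Theorem \ref{thm2.2}(iv).

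For (i), closure of $D$ under scalars is immediate, while additive closure and $D^2=0$ both follow at once from Theorem \ref{thm2.2}(iv)(a): for $x,y\in D$, $(x,y)^2$ equals $(x^2)=0$ or $(y^2)=0$, so $x^2$, $xy$, $y^2$ all vanish and $(x+y)^2=0$. To see that $R/D$ is arithmetical, equivalently that its principal ideals form a chain (since $R/D$ is local), pick $a,b\in R$ with, say, $(a,b)^2=(a^2)$; write $ab=\mu a^2$, so that $a(b-\mu a)=0$. Since $(a,b-\mu a)=(a,b)$, Theorem \ref{thm2.2}(iv)(b) applies and gives $(b-\mu a)^2=0$, i.e., $b\equiv\mu a\pmod D$. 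For (iii), let $a\in\mathfrak{m}\setminus D$ and $b\in(0:a)$; Theorem \ref{thm2.2}(iv)(a) offers $(a,b)^2=(a^2)$ or $(b^2)$, but the second case (applied with roles swapped) would force $a^2=0$, contradicting $a\notin D$, so $(a,b)^2=(a^2)$ and Theorem \ref{thm2.2}(iv)(b) yields $b^2=0$. Part (ii) is then a short deduction: if $a\in D$, Theorem \ref{thm2.2}(iv)(a) applied to $a$ and any $y\in D$ gives $ay=0$, whence $D\subseteq(0:a)$; if $a\notin D$, (iii) gives $(0:a)\subseteq D$. For $D\subseteq Ra+(0:a)$, the case $a\in D$ is trivial, while for $a\notin D$ and $y\in D$ the calculation from (i) produces $\mu$ with $y-\mu a\in(0:a)$.

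The main obstacle is (iv), where the hypothesis that $\mathfrak{m}$ is not nilpotent must be invoked. I would pass to the local arithmetical ring $R/D$, in which $\bar{\mathfrak{m}}$ is the nilradical, and first prove $\bar{\mathfrak{m}}=\bar{\mathfrak{m}}^2$ by contradiction. Picking $\bar a\in\bar{\mathfrak{m}}\setminus\bar{\mathfrak{m}}^2$, the chain-ring property forces $(\bar b)\subseteq(\bar a)$ for every $\bar b\in\bar{\mathfrak{m}}$, for otherwise $\bar a=\bar s\bar b$ with $\bar s\in\bar{\mathfrak{m}}$ puts $\bar a\in\bar{\mathfrak{m}}^2$. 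Hence $\bar{\mathfrak{m}}=(\bar a)$ is principal, and nilpotence of $\bar a$ yields $\bar{\mathfrak{m}}^n=0$, i.e., $\mathfrak{m}^n\subseteq D$; combined with $D^2=0$ this produces $\mathfrak{m}^{2n}=0$, against the hypothesis. Hence $\mathfrak{m}=\mathfrak{m}^2+D$.

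Finally, multiplying this identity by $\mathfrak{m}$ gives $\mathfrak{m}^2=\mathfrak{m}^3+D\mathfrak{m}$, so it suffices to absorb $D\mathfrak{m}$ into $\mathfrak{m}^3$. For $d\in D$ and $a\in\mathfrak{m}$, write $a=a'+d'$ with $a'\in\mathfrak{m}^2$ and $d'\in D$; since $D^2=0$, one has $da=da'$. Expressing $a'=\sum u_jv_j$ with $u_j,v_j\in\mathfrak{m}$ and applying Theorem \ref{thm2.2}(iv)(a) to each pair $(d,u_j)$, one gets $du_j=0$ or $du_j\in(u_j^2)\subseteq\mathfrak{m}^2$, so every $du_jv_j$ lies in $\mathfrak{m}^3$, finishing the plan.
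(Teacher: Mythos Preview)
Your argument is correct. The paper does not supply a proof of Theorem~\ref{thm2.3}; it records the result as a citation from \cite{BG}, so there is nothing to compare against beyond verifying that your proof is sound. Each step checks out: parts (i)--(iii) are direct consequences of Theorem~\ref{thm2.2}(iv) exactly as you describe, and your treatment of (iv) --- reducing to $\bar{\mathfrak m}=\bar{\mathfrak m}^{2}$ in the chain ring $R/D$ via a principality-forces-nilpotence contradiction, then absorbing $D\mathfrak m$ into $\mathfrak m^{3}$ by applying Theorem~\ref{thm2.2}(iv)(a) termwise --- is clean and complete. One small point you leave implicit in (ii): when $a\notin D$ and $y\in D$, the alternative $(a,y)^{2}=(y^{2})=0$ is ruled out because it would force $a^{2}=0$; this is what guarantees you are in the case $(a,y)^{2}=(a^{2})$ so that the manipulation from (i) applies.
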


\begin{prop}[Bazzoni-Glaz 2007 \cite{BG}]\label{prop2.1}
Let $(R,\mathfrak{m})$ be a local Gaussian ring. If $\mathfrak{m}$ is non-zero and
nilpotent, then $\wgd_R  \mathfrak{m}=\infty$.
\end{prop}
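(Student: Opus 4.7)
The plan is to show $\Tor_i^R(R/\mathfrak{m},R/\mathfrak{m})\neq 0$ for every $i\geq 0$, which via the long exact sequence associated with $0\to\mathfrak{m}\to R\to R/\mathfrak{m}\to 0$ gives $\Tor_i^R(R/\mathfrak{m},\mathfrak{m})\neq 0$ for all $i\geq 0$, and therefore $\wgd_R\mathfrak{m}=\infty$. Let $n\geq 2$ be the least integer with $\mathfrak{m}^n=0$; I will proceed by induction on $n$.

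For the base case $n=2$, the key observation is that $\mathfrak{m}\cdot\mathfrak{m}=0$ makes $\mathfrak{m}$ an $R/\mathfrak{m}$-vector space, so after choosing a basis we obtain an $R$-module isomorphism $\mathfrak{m}\cong\bigoplus_\alpha R/\mathfrak{m}$. The dimension shift then reads
\[
\Tor_{i+1}^R(R/\mathfrak{m},R/\mathfrak{m})\cong\Tor_i^R(R/\mathfrak{m},\mathfrak{m})\cong\bigoplus_\alpha\Tor_i^R(R/\mathfrak{m},R/\mathfrak{m})\qquad(i\geq 1),
\]
and combined with $\Tor_1^R(R/\mathfrak{m},R/\mathfrak{m})=\mathfrak{m}/\mathfrak{m}^2=\mathfrak{m}\neq 0$, induction on $i$ propagates non-vanishing to every degree.

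For the inductive step $n\geq 3$, I would exploit the Gaussian structure summarised in Theorem~\ref{thm2.3}. The socle $S=(0:\mathfrak{m})$ contains $\mathfrak{m}^{n-1}\neq 0$ and, being annihilated by $\mathfrak{m}$, is itself a direct sum $\bigoplus R/\mathfrak{m}$ as an $R$-module. The quotient $\overline R=R/S$ is again local Gaussian (since the Gaussian property passes to quotients), with maximal ideal $\overline{\mathfrak{m}}=\mathfrak{m}/S$ nilpotent of index at most $n-1$ because $\mathfrak{m}^{n-1}\subseteq S$; the inductive hypothesis applied to $\overline R$ then yields $\Tor_i^{\overline R}(\overline R/\overline{\mathfrak{m}},\overline R/\overline{\mathfrak{m}})\neq 0$ for every $i\geq 0$.

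The main obstacle is transferring the non-vanishing from $\overline R$-Tor back to $R$-Tor. My plan is to apply the Cartan--Eilenberg change-of-rings spectral sequence
\[
E^2_{p,q}=\Tor_p^{\overline R}\!\bigl(R/\mathfrak{m},\,\Tor_q^R(\overline R,R/\mathfrak{m})\bigr)\Longrightarrow\Tor_{p+q}^R(R/\mathfrak{m},R/\mathfrak{m}),
\]
together with the computation $\Tor_q^R(\overline R,R/\mathfrak{m})\cong\bigoplus\Tor_{q-1}^R(R/\mathfrak{m},R/\mathfrak{m})$ for $q\geq 1$, obtained from $0\to S\to R\to\overline R\to 0$ and the decomposition of $S$. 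The bottom row $E^2_{p,0}=\Tor_p^{\overline R}(R/\mathfrak{m},R/\mathfrak{m})$ is non-zero for every $p$ by induction and receives no incoming differentials, so a finite flat dimension hypothesis $\Tor_i^R(R/\mathfrak{m},R/\mathfrak{m})=0$ for $i\gg 0$ would force every $E^2_{p,0}$ in large degree to be killed by outgoing differentials into the higher rows; tracking those cancellations, while keeping in mind that the higher rows are themselves built out of the very $R$-Tor's hypothesised to vanish, should yield the desired contradiction.
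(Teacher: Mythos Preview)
The paper does not prove Proposition~\ref{prop2.1}; it merely quotes it from Bazzoni--Glaz~\cite{BG}. So I evaluate your argument on its own merits.

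Your base case $n=2$ is correct. The inductive step, however, has a genuine gap: you set up the change-of-rings spectral sequence and then assert that ``tracking those cancellations \ldots should yield the desired contradiction,'' but you never carry this out. The difficulty is real. Under the hypothesis $\Tor_i^R(k,k)=0$ for $i>N$, the $E^2$-page has nonzero rows $q=0,1,\dots,N+1$, each of the form $\bigoplus \Tor_p^{\overline R}(k,k)$; these can be infinite-dimensional $k$-vector spaces, so there is no dimension count forcing the bottom row to survive. One would need to control the differentials explicitly, and nothing you have written does this.

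The induction on the nilpotency index is in fact unnecessary, and the Gaussian hypothesis plays no role. Because $\mathfrak{m}$ is nilpotent, Nakayama's lemma holds for \emph{all} $R$-modules: $\mathfrak{m}M=M$ forces $M=\mathfrak{m}^nM=0$. Hence every module $M$ admits a minimal free resolution $F_\bullet\to M$ with $\partial(F_i)\subseteq\mathfrak{m}F_{i-1}$, obtained by repeatedly lifting a $k$-basis of $M/\mathfrak{m}M$. Tensoring with $k$ kills every differential, so $\Tor_i^R(M,k)\cong F_i/\mathfrak{m}F_i$. Applying this to $M=k$: if $\Tor_i^R(k,k)=0$ for some minimal $i\geq 2$, then $F_i=0$, hence $F_{i-1}$ embeds as a nonzero free submodule of $\mathfrak{m}F_{i-2}$; but $\mathfrak{m}^{n-1}\cdot\mathfrak{m}F_{i-2}=0$ with $\mathfrak{m}^{n-1}\neq 0$, contradicting freeness of $F_{i-1}$. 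Thus $\Tor_i^R(k,k)\neq 0$ for every $i$, and $\wgd_R\mathfrak{m}=\infty$ follows immediately. This direct argument replaces your entire inductive step.
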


\section{Some results on local Gaussian rings}

Throughout this section (as well as in the sequel) $D$ is supposed to be as in Theorem \ref{thm2.3}.
It is well known that if the $\wgd_R(M)=n$, then there exists a cyclic $R$-module, say
$R/I$ such that $\Tor_n(R/I, M)\neq 0$. In the next lemma, we show that this cyclic module can
be chosen with some additional properties.

\begin{lemma}\label{lemma2}
Let $R$ be local Gaussian ring and $M$ be a module over $R$ with $\wgd_R(M)=n$. Let $I$ be an
ideal of $R$. If $\Tor_n(R/I, M)\neq 0$, then there exists an ideal $J\subset R$ such that
$\Tor_n(R/J, M)\neq 0$ and $J\supseteq I+D$.
\end{lemma}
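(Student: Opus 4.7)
The plan is to enlarge $I$ one element of $D$ at a time by transfinite recursion, exploiting the fact that $D^2=0$ in a local Gaussian ring to preserve $\Tor_n$-nonvanishing throughout the process.

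First I would record two ingredients. Since $\wgd_R(M)=n$, the functor $\Tor_n^R(-,M)$ is left exact in the first variable: from any short exact sequence $0\to A\to B\to C\to 0$ the long exact sequence opens with $\Tor_{n+1}(C,M)=0$, giving
\[
0\to \Tor_n(A,M)\to \Tor_n(B,M)\to \Tor_n(C,M)\to \cdots.
\]
Second, by Theorem~\ref{thm2.3}(i) we have $D^2=0$, hence $D\subseteq (0:d)$ for every $d\in D$, and consequently $(K:d)\supseteq K+D$ for every ideal $K$ of $R$.

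Next, well-order $D\setminus I=\{d_\alpha:\alpha<\lambda^\ast\}$ and construct, by transfinite recursion, an increasing chain of ideals $I=I_0\subseteq I_1\subseteq\cdots$ with $\Tor_n(R/I_\alpha,M)\neq 0$ and $d_\beta\in I_\alpha$ for all $\beta<\alpha$. At a successor stage $\alpha\mapsto\alpha+1$, I would apply left-exactness to the sequence
\[
0\to R/(I_\alpha:d_\alpha)\cong (I_\alpha+(d_\alpha))/I_\alpha\to R/I_\alpha\to R/(I_\alpha+(d_\alpha))\to 0,
\]
obtaining a dichotomy: either (a) $\Tor_n(R/(I_\alpha:d_\alpha),M)\neq 0$, in which case $J:=(I_\alpha:d_\alpha)\supseteq I_\alpha+D\supseteq I+D$ is already the desired ideal and the recursion terminates; or (b) $\Tor_n(R/(I_\alpha:d_\alpha),M)=0$, in which case the induced map $\Tor_n(R/I_\alpha,M)\hookrightarrow \Tor_n(R/(I_\alpha+(d_\alpha)),M)$ is injective, and I set $I_{\alpha+1}=I_\alpha+(d_\alpha)$.

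The main obstacle is the limit stage, and this is where the left-exactness pays off. At a limit $\lambda$ put $I_\lambda=\bigcup_{\alpha<\lambda}I_\alpha$, so $R/I_\lambda=\varinjlim R/I_\alpha$; since $\Tor$ commutes with filtered colimits, $\Tor_n(R/I_\lambda,M)=\varinjlim \Tor_n(R/I_\alpha,M)$. By transfinite induction every transition map in this directed system is injective---at successor stages by case (b), at limit stages because filtered colimits of injections of $R$-modules remain injections---so $\Tor_n(R/I,M)\hookrightarrow \Tor_n(R/I_\lambda,M)$, and in particular $\Tor_n(R/I_\lambda,M)\neq 0$. Thus if case (a) never occurs, the recursion runs through all of $\lambda^\ast$ and produces $I_{\lambda^\ast}\supseteq I+D$ with $\Tor_n(R/I_{\lambda^\ast},M)\neq 0$; either way we obtain an ideal $J$ with the required properties.
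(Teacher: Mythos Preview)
Your proof is correct and follows essentially the same strategy as the paper: at each step use the short exact sequence $0\to R/(I_\alpha:d_\alpha)\to R/I_\alpha\to R/(I_\alpha+(d_\alpha))\to 0$ together with $(I_\alpha:d_\alpha)\supseteq I_\alpha+D$ (from $D^2=0$) to obtain either the desired $J$ or an injection on $\Tor_n$, and then pass to a colimit. The only difference is organizational: the paper adds finitely many elements of $D$ at a time and then takes the filtered colimit over finite subsets $J\subseteq D$, which yields $\Tor_n(R/I,M)\hookrightarrow\Tor_n(R/(I+D),M)$ directly and avoids the transfinite recursion.
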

\begin{proof} Suppose the lemma is not true. Then we will prove that the natural projection $R/I \to
R/(I+x_1R+\cdots +x_mR)$ induces an inclusion
\begin{equation}\label{tor1}
 \Tor_n (R/I, M)\hookrightarrow \Tor_n (R/(I+x_1R+\cdots +x_mR), M)
\end{equation}
 for any finite subset $\{x_1, \dots, x_m\}\subset D$. Towards that end, set $I_0=I$ and define
 $I_p$ inductively as $I_p=I_{p-1}+x_p R$ for all $1\leq p\leq m$.
 We have the following short exact sequence
\[
 0\to (x_pR+I_{p-1})/I_{p-1} \to R/I_{p-1} \to R/I_p \to 0
\]
 for all $1\leq p\leq m$. The homomorphism $f: R \to (x_pR+I_{p-1})/I_{p-1}$ defined
by $f(r)=rx_p+I_{p-1}$ for all $r\in R$ induces an isomorphism $R/\Ker f \cong (x_p
R+I_{p-1})/I_{p-1}$. Furthermore we have that $\Ker f\supset I_{p-1}+ (0:x_p)
\supset I+D$. If $\Tor_n ((x_pR+I_{p-1})/I_{p-1}, M)\neq 0$, then the lemma is true
with $J=\Ker f$. So assume that $\Tor_n ((x_pR+I_{p-1})/I_{p-1}, M)= 0$. In this
case the natural projection $R/I_{p-1} \to R/I_p$ induces an inclusion
$\Tor_n (R/I_{p-1}, M)\hookrightarrow \Tor_n (R/I_p, M)$ for all $1\leq p\leq m$,
proving (\ref{tor1}).

 Now let $\mathcal{X}$ denote the following class of ideals: $J\in \mathcal{X}$ iff
$J\subset D$ and $J$ is finitely generated. Then
\begin{equation}\label{directlim}
\varinjlim_{J\in \mathcal{X}} \Tor_n(R/(I+J),M)=\Tor_n(R/(I+D),M)
\end{equation}

Using (\ref{tor1}) and (\ref{directlim}), we obtain an inclusion $\Tor_n (R/I,
M)\hookrightarrow \Tor_n (R/(I+D), M)$. Thus $\Tor_n (R/(I+D), M)\neq 0$ and the
lemma is proved.
\end{proof}

The next lemma is an immediate consequence of the long exact sequence of $\Tor$
groups applied to the given short exact sequence. We note it here for the readers
convenience.

\begin{lemma}\label{lemma3}
Let $R$ be a commutative (not necessarily local Gaussian) ring. Let $M, M_1$ and $M_2$
be $R$-modules and $f:M_1\to M_2$
be an injective homomorphism. If the $\wgd_R(M)=n$, then $f_*:\Tor_n(M_1, M) \to
\Tor_n(M_2, M)$ is also injective.
\end{lemma}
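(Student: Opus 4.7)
The plan is to reduce the claim to a single application of the long exact sequence of $\Tor$. Since $f\colon M_1\to M_2$ is injective, I view $M_1$ as a submodule of $M_2$ via $f$ and form the short exact sequence
\[
0\longrightarrow M_1 \xrightarrow{\;f\;} M_2 \longrightarrow M_2/f(M_1)\longrightarrow 0.
\]
Applying $-\otimes_R M$ and using the standard derived long exact sequence, I get a fragment
\[
\Tor_{n+1}\bigl(M_2/f(M_1),\,M\bigr)\longrightarrow \Tor_n(M_1,M)\xrightarrow{\,f_*\,} \Tor_n(M_2,M).
\]

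Next I exploit the hypothesis $\wgd_R(M)=n$. By Definition \ref{weak} (together with the reformulation noted immediately after it), this means that $\Tor_k(N,M)=0$ for every $R$-module $N$ and every $k\ge n+1$; equivalently, $M$ admits a flat resolution of length $n$. Applied to $N=M_2/f(M_1)$ and $k=n+1$, this forces
\[
\Tor_{n+1}\bigl(M_2/f(M_1),\,M\bigr)=0.
\]
Exactness of the fragment above then gives $\ker(f_*)=0$, which is the desired injectivity.

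There is no real obstacle: the statement is purely a bookkeeping consequence of the fact that vanishing of $\Tor$ one degree above the weak dimension turns the connecting homomorphism in the long exact sequence into the zero map, so that $f_*$ inherits injectivity from $f$. The only minor point to note is that one must invoke the global vanishing $\Tor_{n+1}(-,M)=0$, and not merely the existence of some module witnessing weak dimension $n$; this is built into Definition \ref{weak}, so no additional work is required.
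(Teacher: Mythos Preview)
Your proof is correct and is essentially identical to the paper's own argument: form the short exact sequence $0\to M_1\to M_2\to M_2/M_1\to 0$, apply the long exact sequence of $\Tor$, and use $\Tor_{n+1}(M_2/M_1,M)=0$ from $\wgd_R(M)=n$.
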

\begin{proof} The proof is a direct consequence of the long exact sequence of $\Tor$
groups applied to the given short exact sequence and the fact that
$\Tor_{n+1} (M_2/M_1, M)=0$.
\end{proof}

The next lemma is very useful for us in the sequel.

\begin{lemma}\label{lemma4}
Let $(R,\mathfrak{m})$ be a local Gaussian ring and $M$ be a module over $R$. If \newline
$\wgd_R(M)=n\geq 1$, then $\Tor_n (R/D, M )=0$.
\end{lemma}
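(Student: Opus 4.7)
The plan is to combine reductions from the short exact sequence $0 \to D \to R \to R/D \to 0$ with a principal-ideal argument via Lemma~\ref{lemma3}, exploiting the annihilator structure of $D$ from Theorem~\ref{thm2.3}.

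First, I would apply the long exact sequence of $\Tor$ to $0 \to D \to R \to R/D \to 0$. Since $\wgd_R(M)=n$ forces $\Tor_{n+1}(R/D, M)=0$ and $\Tor_n(R, M)=0$ (the latter because $n\geq 1$), this yields $\Tor_n(D, M)=0$, and for $n\geq 2$ the isomorphism $\Tor_n(R/D, M)\cong \Tor_{n-1}(D, M)$. For $n=1$ the claim reduces to the injectivity of the multiplication map $D\otimes M \to M$.

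Next comes the key observation: for every $x\in R$, the inclusion $Rx\hookrightarrow R$ combined with Lemma~\ref{lemma3} gives $\Tor_n(Rx, M)\hookrightarrow \Tor_n(R, M)=0$, so $\Tor_n(R/(0:x), M)=0$. The heart of the argument is to locate $y\in R$ with $(0:y)=D$, because then $R/D\cong Ry$ embeds in $R$ and Lemma~\ref{lemma3} applied to that embedding immediately produces $\Tor_n(R/D, M)=0$. Such a $y$ exists whenever $(0:D)\supsetneq D$: any $y\in (0:D)\setminus D$ satisfies $(0:y)\subseteq D$ by Theorem~\ref{thm2.3}(iii), while $yD=0$ forces $D\subseteq (0:y)$, hence $(0:y)=D$.

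When $(0:D)=D$, no such $y$ lies outside $D$ and the argument becomes more delicate. By Theorem~\ref{thm2.3}(i) the ring $R/D$ is arithmetical, so the ideals of $R$ containing $D$ form a chain; in particular the family $\{(0:x):x\in D\}$ is a chain of ideals whose infimum equals $(0:D)=D$. If some $(0:x)=D$ for $x\in D$, the principal-ideal argument applies again. Otherwise the chain descends strictly to $D$, and I would use Lemma~\ref{lemma2} (which lets one enlarge ideals by elements of $D$ while preserving non-vanishing of $\Tor_n$) together with the description $D=\varinjlim J$ over finitely generated $J\subseteq D$, each of which satisfies $J^2=0$ by Theorem~\ref{thm2.2}(v), to propagate the vanishing statements $\Tor_n(R/(0:x), M)=0$ through the chain and down onto $R/D$.

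The main obstacle is precisely this residual case: $(0:D)=D$ with no $x\in D$ satisfying $(0:x)=D$. Here a single embedding $R/D\hookrightarrow Rx$ is unavailable, and a filtered colimit argument is required, combining the Gaussian control of annihilators from Theorem~\ref{thm2.3} with the homological tools of Lemmas~\ref{lemma2} and~\ref{lemma3} to force the direct-limit of the groups $\Tor_n(R/J, M)$ over finitely generated $J\subseteq D$ to vanish.
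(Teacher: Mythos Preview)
Your approach is genuinely different from the paper's, and the easy cases are handled correctly: when some $y$ (in or out of $D$) satisfies $(0:y)=D$, the embedding $R/D\cong Ry\hookrightarrow R$ plus Lemma~\ref{lemma3} finishes immediately. The residual case, however, is not closed in your write-up, and the invocation of Lemma~\ref{lemma2} is a red herring: applied with any $I\subseteq D$ it only produces some $J\supseteq D$, which tells you nothing new. What actually completes your argument is this: in the residual case every nonzero $x\in D$ is annihilated by some $a_x\in\mathfrak m\setminus D$, and since the ideals $(0:x)\supseteq D$ are linearly ordered (as $R/D$ is arithmetical), any finitely generated $J\subseteq D$ satisfies $J\subseteq(0:a)\subseteq D$ for a single $a\in\mathfrak m\setminus D$. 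Then the transition map $\Tor_n(R/J,M)\to\Tor_n(R/D,M)$ factors through $\Tor_n(R/(0:a),M)\cong\Tor_n(aR,M)=0$, so the colimit $\Tor_n(R/D,M)=\varinjlim_J\Tor_n(R/J,M)$ vanishes. You gestured at this but did not supply the interpolating ideal $(0:a)$, which is the whole point.

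The paper bypasses any global case analysis on $(0:D)$. It works element by element: assuming a nonzero class $\overline w\in\Tor_n(R/D,M)$, it lifts $\overline w$ to a chain $w$ in a free resolution of $M$ and looks at the finitely many nonzero entries $\lambda_1,\dots,\lambda_m\in D$ of $\partial_n(w)$. Either some $a\in\mathfrak m\setminus D$ kills all the $\lambda_j$, in which case multiplication by $a$ embeds $R/D\hookrightarrow R/aD$ while sending $\overline w$ to the class of the genuine cycle $aw$, hence to zero; or no such $a$ exists, in which case $1\mapsto(\lambda_1,\dots,\lambda_m)$ embeds $R/D\hookrightarrow R^m$ and Lemma~\ref{lemma3} again forces $\overline w=0$. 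The paper's dichotomy is tailored to the specific cycle and never asks whether a global embedding $R/D\hookrightarrow R^m$ exists; your structural route is cleaner when such a $y$ is available but requires the extra interpolation step precisely when it is not.
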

\begin{proof} Suppose the lemma is not true. Consider a free resolution of $M$:
\newline $\cdots \xrightarrow{\partial_{n+2}} R^{X_{n+1}}\xrightarrow{\partial_{n+1}}
R^{X_n}\xrightarrow{\partial_{n}}
\cdots \xrightarrow{\partial_1} R^{X_0}\xrightarrow{\partial_0} M$, where $X_i$ are
sets. By assumption
$\Tor_n (R/D, M)=\Ker(\overline{\partial_n})/ \Img(\overline{\partial_{n+1}})\neq
0$, where $\overline{\partial_i}$ is the natural homomorphism
$\overline{\partial_i}:(R/D)^{X_i}\to (R/D)^{X_{i-1}}$ obtained after tensoring the
above resolution by $R/D$ for all $i\in \mathbb{N}$. Since $\Tor_n (R/D, M)\neq 0$,
there exists a $\overline{w}\in \Ker(\overline{\partial_n})$ such that
$\overline{w}\notin \Img(\overline{\partial_{n+1}})$. Let $w$ be the representative
of $\overline{w}$ in $R^{X_n}$. Hence $\partial_n(w)\in D^{X_{n-1}}$. Let
$\lambda_1, \dots,\lambda_m\in D$ be the finitely many non-zero entries of
$\partial_n(w)$. Now we consider two cases.

{\bf Case 1}. There exists an $a\in \mathfrak{m}\setminus D$ such that
$a\lambda_j=0$ for all $1\leq j\leq m$.\\
Define a homomorphism $f:R/D \to R/a D$ which is multiplication by $a$. Using
Theorem \ref{thm2.3}$(iii)$, it follows that $(0:a)\subset D$. This gives the
injectivity of $f$. Therefore by Lemma \ref{lemma3}, $f_*:\Tor_n (R/D, M) \to \Tor_n
(R/(aD), M)$ is injective and hence $f_*(\overline{w})\neq 0$. It is easy to verify
that $aw$ is a representative of $f_*(\overline{w})$ in $R^{X_n}$.
Since $a\in (0:\lambda_j)$ for all $1\leq j\leq m$, we obtain that
$\partial_n(aw)=a\partial_n(w)=0$. This would imply that $f_*(\overline{w})=0$, a
contradiction.

{\bf Case 2}. For all $a\in R\setminus D$ at least one $a\lambda_j\neq 0$.\\
We have an injective homomorphism $g: R/D\to R^m$ defined by $g(1)=(\lambda_1, \dots
, \lambda_m)$. By Lemma \ref{lemma3}, the induced homomorphism
$g_*:\Tor_n (R/D, M) \to \Tor_n (R^m,M)$ is injective. This is a contradiction as
$\Tor_n (R^m,M)=0$.
\end{proof}

\begin{lemma}\label{llemma}
Let $(R,\mathfrak{m})$ be a local Gaussian ring and $M$ be a module over $R$. If \newline
$\wgd_R(M)=n\geq 1$, then there exists an $a\in \mathfrak{m}\setminus D$ such that $\Tor_n(R/(aR+D), M)\neq 0$.
\end{lemma}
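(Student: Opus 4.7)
The plan is to combine Lemma \ref{lemma2} (which enlarges a witnessing ideal so that it contains $D$) with the observation that $R/D$ is a local arithmetical ring by Theorem \ref{thm2.3}(i), hence has a totally ordered lattice of ideals. A direct-limit argument then picks out a witness of the form $R/(aR+D)$, and Lemma \ref{lemma4} rules out the case $a\in D$.

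First I invoke the standard fact (recorded just before Lemma \ref{lemma2}) that $\wgd_R(M)=n$ forces $\Tor_n(R/I,M)\neq 0$ for some ideal $I$ of $R$. Applying Lemma \ref{lemma2} to this $I$, I may replace $I$ by a possibly larger ideal and assume without loss of generality that $I\supseteq D$.

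Next I exploit the arithmetical structure of $R/D$. Since $R$ is local and $R/D$ is arithmetical (Theorem \ref{thm2.3}(i)), $R/D$ is local arithmetical, and its ideals are linearly ordered by inclusion, as recalled in Section 2. Correspondingly, the ideals of $R$ containing $D$ form a chain, and in particular the family $\{\,aR+D : a\in I\,\}$ is a totally ordered collection of sub-ideals of $I$ whose union is $I$ (each element $a\in I$ lies in $aR+D$, and each $aR+D$ lies in $I$). Therefore
\[
R/I \;=\; \varinjlim_{a\in I} R/(aR+D),
\]
and since $\Tor$ commutes with direct limits,
\[
0 \;\neq\; \Tor_n(R/I,M) \;=\; \varinjlim_{a\in I}\Tor_n\bigl(R/(aR+D),M\bigr),
\]
so $\Tor_n(R/(aR+D),M)\neq 0$ for some $a\in I$.

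It remains to place $a$ in $\mathfrak{m}\setminus D$. If $a\in D$, then $aR+D=D$ and the nonvanishing would contradict Lemma \ref{lemma4}; if $a$ were a unit, then $aR+D=R$ and $\Tor_n(0,M)=0$, again a contradiction. Hence $a\in\mathfrak{m}\setminus D$, as required. I do not foresee a serious obstacle: the conceptual content is the observation that passing from $R$ to $R/D$ converts the Gaussian hypothesis into the chain condition of a valuation-like (local arithmetical) ring, after which the proof reduces to the routine fact that $\Tor_n$ commutes with direct limits and two sanity checks on the chosen $a$.
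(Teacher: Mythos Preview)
Your proof is correct and follows essentially the same route as the paper: invoke Lemma~\ref{lemma2} to enlarge the witnessing ideal so that it contains $D$, pass to a direct limit, use that $R/D$ is local arithmetical (Theorem~\ref{thm2.3}(i)) to reduce to an ideal of the form $aR+D$, and then apply Lemma~\ref{lemma4} to force $a\notin D$. The only cosmetic difference is that the paper indexes the direct limit by the finitely generated subideals of $I$ and then observes that modulo $D$ such an ideal is principal, whereas you index directly by the chain $\{aR+D:a\in I\}$; the content is the same.
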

\begin{proof} There exists an ideal $I\subset R$ such that $\Tor_n(R/I, M)\neq 0$.
Without loss of generality one can assume that $D\subset I$
(see Lemma \ref{lemma2}). Using Lemma \ref{lemma4}, we obtain that $I\neq D$. Let $\mathcal{X}$
denote the following class of ideals: $J\in \mathcal{X}$ iff $J\subset I$ and $J$ is finitely
generated. Since $\Tor_n(R/I, D)= \varinjlim_{J\in \mathcal{X}}\Tor_n(R/ J, M)\neq 0$,
there exist $a_1,\dots, a_m\in I$ such that $\Tor_n (R/ (a_1R+\cdots +a_mR+D), M)\neq 0$.
By Theorem \ref{thm2.3}$(i)$, $R/D$ is a local arithmetical ring. Hence there exists an $i$
such that $a_1R+\cdots +a_mR+D=a_iR+D$ for some $1\leq i\leq m$.
\end{proof}

Let $(R,\mathfrak{m})$ be a local ring such that each element of
$\mathfrak{m}$ is a zero divisor. If $b\in \mathfrak{m}$, then there exists an element
$r_b\in \mathfrak{m}$ such that $br_b=0$. In general, the element $r_b$ depends on $b$.
In the next lemma, we show that if $(R,\mathfrak{m})$ is a local Gaussian ring, then a
slightly stronger result is true. In particular, we show that for finitely many elements
in $\mathfrak{m}$, there exists a single element in $\mathfrak{m}$ that annihilates all of them.
\begin{lemma}\label{singleann}
Let $(R,\mathfrak{m})$ be a local Gaussian ring such that each element of
$\mathfrak{m}$ is a zero divisor. If $\lambda_1, \dots, \lambda_n\in \mathfrak{m}$,
then there exists a non-trivial element $a\in \mathfrak{m}$ such that $a\lambda_j=0$
for all $1\leq j\leq n$.
\end{lemma}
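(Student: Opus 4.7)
The plan is to bundle $\lambda_1,\dots,\lambda_n$ into a single finitely generated ideal $I=(\lambda_1,\dots,\lambda_n)\subseteq\mathfrak{m}$ and to exploit the characterization of local Gaussian rings in Theorem \ref{thm2.2}(ii): $I/(I\cap(0:I))$ is a cyclic $R$-module. A natural dichotomy then arises according to whether the submodule $I\cap(0:I)$ vanishes.

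In the easy branch, $I\cap(0:I)\neq 0$, I would simply pick any nonzero $x$ in this intersection. Then $x\in I\subseteq\mathfrak{m}$, and $x\in(0:I)$ forces $x\lambda_j=0$ for every $j$, so $x$ serves as the desired $a$.

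In the remaining branch, $I\cap(0:I)=0$, the cyclicity reduces to $I=Rc$ for some $c\in I\subseteq\mathfrak{m}$. If $c=0$, then $I=0$ and any nonzero element of $\mathfrak{m}$ works. Otherwise $c$ is a nonzero zero divisor by hypothesis, so there exists some nonzero $a\in R$ with $ac=0$; this $a$ annihilates the whole of $I=Rc$ and hence every $\lambda_j$, and it must lie in $\mathfrak{m}$ because a unit in a local ring cannot annihilate a nonzero element.

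The whole argument is only a couple of lines once Theorem \ref{thm2.2}(ii) is available, so I do not expect a genuine obstacle. The only delicate moment is the cyclic branch, where one must verify that the annihilator of $c$ falls inside $\mathfrak{m}$, but this is immediate from locality. The main conceptual insight is really the initial packaging: trading the family $\{\lambda_j\}$ for the single ideal $I$, which is what converts the problem into something the Gaussian hypothesis can act on directly.
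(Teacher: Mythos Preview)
Your proof is correct and in fact cleaner than the paper's. Both arguments rest on Theorem~\ref{thm2.2}(ii), but the case splits differ: you branch on whether $I\cap(0:I)$ vanishes, while the paper branches on whether all $\lambda_j$ lie in $D$. In the paper's nontrivial case (some $\lambda_j\notin D$), the authors write $\lambda_i=r_i\lambda+d_i$ with $d_i\in I\cap(0:I)\subseteq D$, pick a nonzero $d\in(0:\lambda)$, invoke Theorem~\ref{thm2.3}(iii) to place $d$ in $D$, and then use $D^2=0$ to kill the cross terms $dd_i$. Your dichotomy absorbs all of this automatically: either $I\cap(0:I)$ already supplies a nonzero annihilator, or it vanishes and $I$ is genuinely principal, so an annihilator of the single generator does the job. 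The payoff is that your argument never touches $D$ or Theorem~\ref{thm2.3}, making it shorter and more self-contained; the paper's route, by contrast, makes the role of $D$ explicit, which fits the surrounding narrative but is logically unnecessary here. (A trivial caveat shared by both proofs: the degenerate case $I=0$ tacitly assumes $\mathfrak{m}\neq 0$, which is harmless in the intended applications.)
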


\begin{proof}
We divide the proof into two cases.

{\bf Case 1}: $\lambda_1,\dots,\lambda_n\in D$.

 For $1\leq i\leq n$, choose any $\lambda_i\in D\setminus 0$. Using Theorem \ref{thm2.3}$(i)$, the desired result follows.

{\bf Case 2}:  There exist $j\in \{1,2,\ldots,n\}$ such that $\lambda_j\notin D$.

By Theorem \ref{thm2.3}$(iii)$, it follows that $(0:\lambda_j)\subseteq D$. Set
$I=(\lambda_1,\ldots, \lambda_n)$. So $(0:I)\subset (0:\lambda_j)\subseteq D$. Using
Theorem \ref{thm2.2}$(ii)$, we obtain that $I/{I\cap (0:I)}$ is a cyclic $R$-module,
say its generator is $\lambda$. Hence we can write $\lambda_i=r_i\lambda+d_i$ for
all $1\leq i\leq n$, where $d_i\in I\cap (0:I)$ and $r_i\in R$. Observe that
$\lambda\in \mathfrak{m}\setminus D$. Choose any $d\in (0:\lambda)\setminus 0$.
Using Theorem \ref{thm2.3}$(iii)$, it follows that $d\in D$. Multiplying the
equation expressing $\lambda_i$ in terms of $\lambda$ with $d$, we obtain
$d\lambda_i=dr_i\lambda + dd_i$ for all $1\leq i\leq n$. Using  Theorem
\ref{thm2.3}$(i)$, we obtain that $dd_i=0$. Thus $d\lambda_i=0$ for all $1\leq i\leq
n$.
\end{proof}

\begin{lemma}\label{lemma5}
Let $(R,\mathfrak{m})$ be a local Gaussian ring and $M$ be a module over $R$. If each element of
$\mathfrak{m}$ is a zero divisor and $\wgd_R(M)=n\geq 1$, then $\Tor_n (R/\mathfrak{m}, M )=0$.
\end{lemma}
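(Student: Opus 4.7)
The plan is to mirror the proof of Lemma \ref{lemma4} almost verbatim, with the maximal ideal $\mathfrak{m}$ playing the role of the square-zero ideal $D$. The essential new ingredient is the zero-divisor hypothesis: via Lemma \ref{singleann}, any finite list of elements of $\mathfrak{m}$ admits a single nonzero annihilator inside $\mathfrak{m}$, which removes the need for the two-case dichotomy that appears in Lemma \ref{lemma4}.

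Arguing by contradiction, I would suppose $\Tor_n(R/\mathfrak{m}, M)\neq 0$ and fix a free resolution $\cdots \to R^{X_{n+1}}\xrightarrow{\partial_{n+1}} R^{X_n}\xrightarrow{\partial_n}\cdots \to M\to 0$ of $M$. Picking a cycle $\overline{w}\in \Ker(\overline{\partial_n})$ with nonzero class and a lift $w\in R^{X_n}$, the element $\partial_n(w)$ has finitely many nonzero entries $\lambda_1,\dots,\lambda_m\in \mathfrak{m}$. Since every element of $\mathfrak{m}$ is a zero divisor, Lemma \ref{singleann} produces a nonzero $a\in \mathfrak{m}$ satisfying $a\lambda_j=0$ for all $j$.

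Next I would introduce the multiplication-by-$a$ map $f:R/\mathfrak{m}\to R/a\mathfrak{m}$, $r+\mathfrak{m}\mapsto ar+a\mathfrak{m}$, and verify its injectivity: if $ar\in a\mathfrak{m}$, then $r$ differs from an element of $\mathfrak{m}$ by an element of $(0:a)$, and since $a\neq 0$ in the local ring $R$ we have $(0:a)\subseteq \mathfrak{m}$, forcing $r\in\mathfrak{m}$. Lemma \ref{lemma3} then yields an injection $f_*:\Tor_n(R/\mathfrak{m},M)\hookrightarrow \Tor_n(R/a\mathfrak{m},M)$, so in particular $f_*(\overline{w})\neq 0$. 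On the other hand, $aw$ is a representative of $f_*(\overline{w})$, and since $\partial_n(aw)=a\partial_n(w)=0$ in $R^{X_{n-1}}$ (as $a\lambda_j=0$ for each $j$), and $n\geq 1$ forces $\Ker(\partial_n)=\Img(\partial_{n+1})$ at that spot of the resolution, one has $aw=\partial_{n+1}(v)$ for some $v$. Reducing modulo $a\mathfrak{m}$ then exhibits $f_*(\overline{w})$ as a boundary, contradicting $f_*(\overline{w})\neq 0$.

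I do not foresee a genuine obstacle: the only step beyond a line-by-line translation of Lemma \ref{lemma4} is the injectivity of $f$, and that reduces to the trivial local-ring observation $(0:a)\subseteq \mathfrak{m}$ for $a\neq 0$. The argument in fact simplifies compared to Lemma \ref{lemma4}, because the analog of its Case 2, in which no common annihilator exists, is precluded by the zero-divisor hypothesis.
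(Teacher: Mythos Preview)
Your proposal is correct and matches the paper's own proof essentially line for line: the paper simply says to substitute $\mathfrak{m}$ for $D$ in the proof of Lemma~\ref{lemma4} and observes that, thanks to Lemma~\ref{singleann}, one is automatically in Case~1. Your explicit verification of the injectivity of $f:R/\mathfrak{m}\to R/a\mathfrak{m}$ via $(0:a)\subseteq\mathfrak{m}$ is the right replacement for the appeal to Theorem~\ref{thm2.3}(iii) used in Lemma~\ref{lemma4}.
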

\begin{proof} The proof of this Lemma follows by substituting $\mathfrak{m}$ for $D$
in Lemma \ref{lemma4}. As a result of Lemma \ref{singleann}, the proof of lemma
\ref{lemma5} falls under Case 1 of Lemma \ref{lemma4}.
\end{proof}

The next result seems to be restricted but it is useful for our further purposes.

\begin{prop}\label{prop1.1}
Let $(R,\mathfrak{m})$ be a local Gaussian ring. If $\mathfrak{m}\neq 0$ and each
element of $\mathfrak{m}$ is a zero divisor, then $\wgd (R)\geq 3$.
\end{prop}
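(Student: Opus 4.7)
The plan is to exhibit an element $a \in \mathfrak{m}\setminus D$ whose annihilator $(0:a)$ is nonzero and non-flat, and then to propagate this non-flatness through two short exact sequences to force $\wgd_R(R/aR)\geq 3$.

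First, I would handle the edge case $\mathfrak{m}\subseteq D$ separately. Since $D^2=0$ by Theorem \ref{thm2.3}(i), the inclusion $\mathfrak{m}\subseteq D$ forces $\mathfrak{m}^2=0$, so $\mathfrak{m}$ is nonzero and nilpotent and Proposition \ref{prop2.1} already gives $\wgd(R)=\infty\geq 3$. Thus I may assume $\mathfrak{m}\not\subseteq D$ and pick $a\in\mathfrak{m}\setminus D$. By Theorem \ref{thm2.3}(iii), $(0:a)\subseteq D$, and since every element of $\mathfrak{m}$ is a zero-divisor by hypothesis, $(0:a)\neq 0$; fix $b\in (0:a)\setminus\{0\}$.

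The main step, and the only one that really uses the Gaussian hypothesis, is the claim that $(0:a)$ is not flat. Assume for contradiction that it is. The relation $a\cdot b=0$ holds inside the module $(0:a)$, so applying the equational (Lazard) criterion for flatness yields $s_1,\ldots,s_k\in R$ and $y_1,\ldots,y_k\in(0:a)$ with
\[
b=\sum_{j=1}^k s_j y_j \quad\text{and}\quad as_j=0 \text{ for every } j.
\]
The second condition forces $s_j\in(0:a)\subseteq D$, and $y_j\in(0:a)\subseteq D$ is automatic; since $D^2=0$ (Theorem \ref{thm2.3}(i)), each product $s_jy_j$ vanishes, so $b=0$, a contradiction. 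This step is the heart of the proof and the one I expect to be the main obstacle to find; everything downstream is bookkeeping.

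With $\wgd_R((0:a))\geq 1$ in hand, I would finish by chasing the short exact sequences $0\to(0:a)\to R\to aR\to 0$ and $0\to aR\to R\to R/aR\to 0$. Because $R$ is flat, the long exact sequences of $\Tor$ give natural isomorphisms $\Tor_{n+2}(R/aR,N)\cong\Tor_{n+1}(aR,N)\cong\Tor_n((0:a),N)$ for every $R$-module $N$ and every $n\geq 1$. Choosing $N$ so that $\Tor_1((0:a),N)\neq 0$ then produces $\Tor_3(R/aR,N)\neq 0$, and hence $\wgd(R)\geq \wgd_R(R/aR)\geq 3$, as required.
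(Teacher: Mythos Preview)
Your proof is correct and follows the same overall architecture as the paper's: dispose of the case $\mathfrak{m}=D$ via Proposition~\ref{prop2.1}, pick $a\in\mathfrak{m}\setminus D$, show $(0:a)$ is not flat, and then read off $\wgd(R)\geq 3$ from the two short exact sequences. The only real difference is in the non-flatness step. The paper argues directly with tensor products: it sets $I=aR$ and observes that $I\cdot(0:a)=0$, while an explicit isomorphism $I\otimes(0:a)\cong(0:a)$ (sending $ar\otimes b\mapsto rb$, well-defined because $(0:a)\subseteq D$ and $D^2=0$) shows $I\otimes(0:a)\neq 0$; hence the canonical map $I\otimes(0:a)\to I(0:a)$ is not injective and $(0:a)$ is not flat. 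Your use of the equational (Lazard) criterion reaches the same contradiction more quickly and with less computation, exploiting exactly the same facts $(0:a)\subseteq D$ and $D^2=0$. Either argument works; yours is a bit slicker, the paper's makes the failure of flatness more visibly quantitative by identifying $aR\otimes(0:a)$ with $(0:a)$ itself.
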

\begin{proof} If $\mathfrak{m} =D$, then Proposition \ref{prop2.1} implies that
$\wgd (R) = \infty$. If $\mathfrak{m}\neq D$, then take any $x\in
\mathfrak{m}\setminus D$ and consider
the following resolution of $R/xR$:
\[
0\to (0:x)\to R\xrightarrow{x_m}R\xrightarrow{\pi}R/xR ,
\]
where $\pi$ is the natural projection and $x_m$ is multiplication by $x$. If $\wgd
(R)<3$, then $(0:x)$ must be flat. Thus
it suffices to show that $(0:x)$ is not flat. We will use the fact that if $M$ is a
flat $R$ module then $I\otimes M=IM$ for all ideals $I\subset R$. Set $I=xR$ and
$M=(0:x)$ and observe that $IM=0$. Hence it suffices to show that $I\otimes
(0:x)\neq 0$. Since $x\notin D$, Theorem \ref{thm2.3}$(iii)$ implies that
$(0:x)\subset D$. Define a homomorphism $\theta : I\otimes (0:x)\to (0:x)$ as
follows: if $a\in I$ and $b\in (0:x)$, then set $\theta(a\otimes b)=rb$, where $r\in
R$
is such that $a=xr$. If there is another $r'\in R$ such that $a=xr'$, then
$(r-r')\in (0:x)$ which implies that $(r-r')b=0$.
Taking into account the last remark, it is easy to check that $\theta$ is well
defined. Moreover the homomorphism
$\theta':(0:x)\to I\otimes (0:x)$ defined by $\theta'(c)=x\otimes c$ for all $c\in
(0:x)$ is an inverse of $\theta$. Hence we have an isomorphism
$\theta:I\otimes (0:x)\cong (0:x)$ which shows that $I\otimes (0:x)\neq 0$, proving
that $(0:x)$ is not flat.
\end{proof}

Let $R$ be a local Gaussian ring which admits the following property:
\begin{Property}\label{hypo1}
For all $x\in D\setminus 0$, $(0:x)$ is not cyclic modulo $D$. In other words there
is no $a\in R\setminus D$ such that $(0:x)=aR+D$.
\end{Property}

By Theorem \ref{thm2.2}, we know that if $I=(a_1,a_2,\cdots,a_n)$ is a finitely generated ideal of a local Gaussian ring, then $I^2=(a_i^2)$ for some $i$, where $1\leq i\leq n$. But for this particular $i$, $a_i$ could have the property that $a_i^4=0$. In the next lemma, we show that if $(R,\mathfrak{m})$ is a local Gaussian ring with some additional hypothesis, then the square of any finitely generated proper ideal of $R$ is contained in the square of an element, say $x\in \mathfrak{m}$ with $x$ having the property that $x^4\neq 0$. We also show that $\mathfrak{m}^2$ is flat.

\begin{lemma}\label{flat} Let $(R,\mathfrak{m})$ be a local Gaussian ring such that
each element of $\mathfrak{m}$ is a zero divisor. If $R$ admits Property
(\ref{hypo1}) and $\mathfrak{m}\neq D$ , then

\begin{itemize}
\item[(i)] $\mathfrak{m}=\mathfrak{m}^2+D$;
\item[(ii)] for any finitely generated ideal $J\subset \mathfrak{m}$ there exist
$x\in \mathfrak{m}$ such that $J^2\subset x^2R$ and $x^2\notin D$;
\item[(iii)] $\mathfrak{m}^2$ is flat.
\end{itemize}
\end{lemma}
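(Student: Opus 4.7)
The plan is to prove (i), (ii), and (iii) in order, with each building on the previous. Part (i) leverages Property~\ref{hypo1} and the arithmeticity of $R/D$ to decompose elements of $\mathfrak{m}$; part (ii) combines Theorem~\ref{thm2.2}(v) with (i) to locate $x$ with $x^2 \notin D$; and part (iii) reduces flatness to a relation-lifting check using (i) and (ii).

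For (i), take $a \in \mathfrak{m}\setminus D$ (the case $a \in D$ is trivial). Since $a$ is a zero divisor and $a \notin D$, Theorem~\ref{thm2.3}(iii) gives $0 \ne (0:a) \subseteq D$. Pick $0 \ne x \in (0:a)$; then $x \in D$, and $(0:x)$ contains $a$ (because $xa=0$) and $D$ (because $D^2=0$), so $(0:x) \supseteq aR + D$. Property~\ref{hypo1} forbids equality, so there is $b \in (0:x) \setminus (aR+D)$, necessarily in $\mathfrak{m}$. In the local arithmetical ring $R/D$ (Theorem~\ref{thm2.3}(i)), the ideals $\bar a R/D$ and $\bar b R/D$ are comparable, and since $\bar b \notin \bar a R/D$ we must have $\bar a R/D \subsetneq \bar b R/D$, giving $a = br+d$ with $r \in R$, $d \in D$. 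If $r$ were a unit, then $b = r^{-1}(a-d) \in aR+D$, contradicting the choice of $b$; hence $r \in \mathfrak{m}$, and $a = br+d \in \mathfrak{m}^2 + D$.

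For (ii), I first show some $x \in \mathfrak{m}$ has $x^2 \notin D$. If every $x \in \mathfrak{m}$ satisfied $x^2 \in D$, Theorem~\ref{thm2.2}(v) applied to pairs in $\mathfrak{m}$ would force $\mathfrak{m}^2 \subseteq D$, and then (i) yields
\[
\mathfrak{m}^2 = (\mathfrak{m}^2+D)^2 \subseteq \mathfrak{m}^4 + \mathfrak{m}^2 D + D^2 = 0
\]
(since $\mathfrak{m}^2 \subseteq D$ and $D^2 = 0$), so $\mathfrak{m} = D$, contradicting the hypothesis. Fix such $x$. For a finitely generated $J = (a_1,\dots,a_n) \subseteq \mathfrak{m}$, Theorem~\ref{thm2.2}(v) applied to $J + xR$ gives $(J+xR)^2 = (z^2)$ for some $z \in \{a_1,\dots,a_n,x\}$. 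If $z = a_j$, then $x^2 \in (a_j^2)$, which forces $a_j^2 \notin D$ (else $x^2 \in D$), so we take $a_j$; if $z = x$, we take $x$. Either way $J^2 \subseteq (x')^2 R$ with $(x')^2 \notin D$.

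For (iii), I first derive $\mathfrak{m}^2 = \mathfrak{m}^3$: from (i),
\[
\mathfrak{m}^2 = \mathfrak{m}(\mathfrak{m}^2+D) = \mathfrak{m}^3 + \mathfrak{m} D, \qquad \mathfrak{m} D = (\mathfrak{m}^2+D)D = \mathfrak{m}^2 D + D^2 = \mathfrak{m}^2 D \subseteq \mathfrak{m}^3.
\]
By (ii), $\mathfrak{m}^2$ is the directed union $\bigcup_{x^2 \notin D} x^2 R$. I verify flatness via the single-relation form of Lazard's criterion: for every $a \in R$ and $m \in \mathfrak{m}^2$ with $am = 0$, show $m \in (0:a)\mathfrak{m}^2$. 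Writing $m = x^2 s$ with $x^2 \notin D$ (by (ii)) and applying (i) to $s \in \mathfrak{m}$ to decompose $s = s_1 + d_1$ with $s_1 \in \mathfrak{m}^2$, $d_1 \in D$, I obtain $m = x^2 s_1 + x^2 d_1$. When $a \in D$, we have $D \subseteq (0:a)$, so $x^2 d_1 \in \mathfrak{m}^2 D = D\mathfrak{m}^2 \subseteq (0:a)\mathfrak{m}^2$, and $x^2 s_1 \in (0:a)\mathfrak{m}^2$ when $x^2 \in (0:a)$; when $a \notin D$, Theorem~\ref{thm2.3}(iii) gives $(0:a) \subseteq D$, and $am = 0$ forces $m \in D \cap \mathfrak{m}^2$, reducing to showing $D \cap \mathfrak{m}^2 = D\mathfrak{m}^2$. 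The main obstacle is precisely this last equality together with handling the analogous subcase when $x^2 \notin (0:a)$: one must combine Property~\ref{hypo1} applied to the nonzero elements of $D$ that appear in the decomposition with the linear order of ideals in $R/D$ to rule out an unbounded iteration, and instead produce in finitely many steps a representation $m = \sum c_j m'_j$ with $c_j \in (0:a)$ and $m'_j \in \mathfrak{m}^2$.
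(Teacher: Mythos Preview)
Your arguments for (i) and (ii) are correct and essentially match the paper's; the only cosmetic difference is that in (ii) you pass through $\mathfrak{m}^2=0$ to reach $\mathfrak{m}=D$, whereas the paper simply notes $\mathfrak{m}=\mathfrak{m}^2+D\subseteq D$ once $\mathfrak{m}^2\subseteq D$.

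Part (iii), however, is not a proof. There are two gaps. First, the ``single-relation form of Lazard's criterion'' you invoke---that $am=0$ forces $m\in(0:a)\mathfrak{m}^2$ for each $a\in R$---is only the vanishing of $\Tor_1(R/aR,\mathfrak{m}^2)$ for \emph{principal} ideals; flatness requires $I\otimes\mathfrak{m}^2\to\mathfrak{m}^2$ injective for arbitrary $I$, and you give no reduction from general $I$ to principal ones. Second, and more seriously, you do not even finish the principal case: you explicitly flag as ``the main obstacle'' the equality $D\cap\mathfrak{m}^2=D\mathfrak{m}^2$ and the subcase $x^2\notin(0:a)$, describe in words what ought to happen, and stop.

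The paper's proof of (iii) avoids both issues by working directly with an arbitrary ideal $I$. Using (ii), every element of $I\otimes\mathfrak{m}^2$ collapses to a single tensor $w=z\otimes x^2$ with $z\in I$ and $x^2\notin D$; this is the step that legitimately replaces a multi-term relation by a single one. Assuming $zx^2=0$, one has $z\in D$ by Theorem~\ref{thm2.3}(iii), and the argument splits on whether $zx=0$ or $zx\neq 0$. In each case Property~\ref{hypo1}, applied to $z$ (respectively $zx$), produces an element of the relevant annihilator lying outside $xR+D$; arithmeticity of $R/D$ then gives $x=cy+d'$ with $c\in\mathfrak{m}$, $d'\in D$, and expanding $z\otimes(cy+d')^2$ and using $D^2=0$ shows $w=0$ by a direct two-line computation. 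Your sketch points in this direction but never performs the calculation, and your detour through $\mathfrak{m}^2=\mathfrak{m}^3$ and the case split on $a\in D$ versus $a\notin D$ is not how the argument actually closes.
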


\begin{proof} (i): Let $a\in \mathfrak{m}\setminus D$. Since every element of
$\mathfrak{m}$ is a zero divisor, there exists $x\in D\setminus 0$ such that $ax=0$.
By Property (\ref{hypo1}), $(0:x)\neq aR+D$. So there exists some $b\in
\mathfrak{m}$ such that $b\in (0:x)$ and $b\notin aR+D$. Theorem \ref{thm2.3}$(i)$
implies that $R/D$ is a local arithmetical ring. So $a\in b R+D$ and hence $a=b r +
d$ for some $r \in R$ and $d\in D$. Moreover $b\notin aR+D$ which implies that $r$
is not a unit and hence $r\in \mathfrak{m}$. Thus $a\in \mathfrak{m}^2+D$.

(ii): First we will show that if $x^2\in D$ for all $x\in \mathfrak{m}$, then
$\mathfrak{m^2}\subset D$. Towards that end let $z\in \mathfrak{m^2}$. Such a $z$ is
of the form $z=\sum_{i=1}^n x_iy_i$, where $x_i,y_i\in m$ for all $1\leq i\leq n$.
Using Theorem \ref{thm2.2}$(iv)$, it follows that
$(x_i,y_i)^2=(x_i^2)\;\text{or}\;(y_i^2)$ for all $1\leq i\leq n$. This shows that
$x_iy_i\in D$ for all $1\leq i\leq n$. Recalling that $D$ is an ideal of $R$, it
follows that $z\in D$. Hence we have proved that $\mathfrak{m^2}\subset D$. By $(i)$,
this would imply that $\mathfrak{m}=D$, a contradiction. Thus there exists an $x\in
m$ such that $x^2\notin D$. By Theorem \ref{thm2.2}$(v)$, for any finitely generated
ideal $J$ we have $J^2=y^2R$ for some $y\in J$. If $y^2\notin D$, then we are done.
If $y^2\in D$, choose any $x\in \mathfrak{m}$ with $x^2\notin D$ and observe that
$y^2\in x^2R$. Thus $J^2\subset x^2 R$.

(iii): To prove that $\mathfrak{m}^2$ is flat over $R$, we show that for any ideal
$I\subset R$, the natural homomorphism $f: I\otimes \mathfrak{m}^2 \to
\mathfrak{m}^2$ is injective. Assume that $w\in I\otimes \mathfrak{m}^2$ is such
that $f(w)=0$. Set $w = \sum_{i=1}^k  z_i\otimes x_i y_i$, where $z_i\in I$ and
$x_i,y_i\in \mathfrak{m}$. By (ii), there exist $x\in \mathfrak{m}$ such that
$x^2\notin D$ and $x_iy_i\in x^2 R$ for all $1\leq i\leq n$. Put $x_iy_i=x^2r_i$,
where $r_i\in R$. Then $w=z\otimes x^2$, where $z =\sum_{i=1}^k z_ir_i \in I$.
Hence $f(z \otimes x^2)=0 \Leftrightarrow zx^2=0$. If $z=0$, then $w=0$ and the proof
is finished.
So assume that $z\neq 0$. Using Theorem \ref{thm2.3}$(iii)$, we obtain that
$(0:a)\subseteq D$ for all $a\in \mathfrak{m}\setminus D$. Since $x^2\in
\mathfrak{m}\setminus D$, it follows that $z\in D$ and either $zx = 0$ or $zx\neq
0$. If $zx = 0$, then $z\in D\setminus 0$ and $x\in (0:z)$.
It follows from Property (\ref{hypo1}) that $(0:z)\neq xR+D$. So there exists $y\in
\mathfrak{m}$ such that $y\in (0:z)$ and $y\notin xR+D$. By Theorem
\ref{thm2.3}$(i)$, we obtain that $R/D$ is a local arithmetical ring. Hence
$(y)\not\subset (x)$. So $x=cy+d'$, where $c\in \mathfrak{m}$ and $d'\in D$.
Computing $w$, we obtain
\[
w = z\otimes x^2 = z\otimes (cy+d')^2=z\otimes (c^2y^2+2cyd'+d'^2)=zy^2\otimes
c^2+zd'\otimes 2cy+z\otimes d'^2 .
\]
Noting that $d'^2,zd'\in D^2=0$ and that $zy^2=0$, we obtain $w=0$. If $zx\neq 0$,
we have $zx\in D\setminus 0$ and $x\in (0:zx)$. By (\ref{hypo1}), there exists
$h\in \mathfrak{m}$ such that $h\in (0:zx)$ and $h\notin xR+D$. Using the same
argument as above, there exists an $a \in \mathfrak{m}$ such that $x=ah+d''$.
Observing that $zd'',d''^2\in D^2$ we obtain that $w = z\otimes x^2=z\otimes
(ah+d'')^2=z\otimes a^2h^2$. Furthermore, by (i) we can write $a=b+d$ where $b\in
\mathfrak{m}^2$ and $d\in D$. Therefore
\begin{equation}\label{tensor}
w=z\otimes (ah^2(b+d))= z\otimes (ah^2b)+z\otimes (ah^2d) =(zah^2)\otimes
b+(zd)\otimes (ah^2).
\end{equation}
Substituting $0=zd\in D^2$ and $ah=x-d''$ in (\ref{tensor}) and recalling that $h\in
(0:zx)$, we obtain $w=(zxh)\otimes b-zhd''\otimes b=0$.
\end{proof}

\begin{lemma}\label{lemma1.2}
Let $(R,\mathfrak{m})$ be a local Gaussian ring such that
each element of $\mathfrak{m}$ is a zero divisor. If $R$ admits Property
(\ref{hypo1}), $\mathfrak{m}\neq D$ and $\wgd R<\infty$, then $\mathfrak{m}$ is flat.
\end{lemma}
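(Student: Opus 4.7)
The plan is to prove Lemma \ref{lemma1.2} by contradiction: assume $\mathfrak{m}$ is not flat, so that $\wgd_R(\mathfrak{m})=n$ for some finite $n\geq 1$, and derive a contradiction with $\wgd R<\infty$. By Lemma \ref{llemma} applied to $M=\mathfrak{m}$, there exists $a\in\mathfrak{m}\setminus D$ such that $\Tor_n(R/(aR+D),\mathfrak{m})\neq 0$. Applying the long exact sequence of $\Tor$ to $0\to\mathfrak{m}^2\to\mathfrak{m}\to\mathfrak{m}/\mathfrak{m}^2\to 0$ and using the flatness of $\mathfrak{m}^2$ from Lemma \ref{flat}(iii), we get an injection (an isomorphism when $n\geq 2$)
\[
\Tor_n(R/(aR+D),\mathfrak{m})\hookrightarrow\Tor_n(R/(aR+D),\mathfrak{m}/\mathfrak{m}^2).
\]
Since $\mathfrak{m}=\mathfrak{m}^2+D$ by Lemma \ref{flat}(i), the composition $D\to\mathfrak{m}\to\mathfrak{m}/\mathfrak{m}^2$ is surjective with kernel $D\cap\mathfrak{m}^2$, so the problem reduces to vanishing of a Tor group involving $D$ and its quotients.

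The strategy then mirrors the proof of Lemma \ref{flat}(iii), now applied at the level of $\mathfrak{m}$. Given a representative cycle, the decomposition $\mathfrak{m}=\mathfrak{m}^2+D$ splits each element of $\mathfrak{m}$ as $\mu+d$ with $\mu\in\mathfrak{m}^2$ and $d\in D$; the $\mathfrak{m}^2$-parts are controlled via the flatness of $\mathfrak{m}^2$ and Lemma \ref{flat}(ii), which furnishes an $x\in\mathfrak{m}$ with $x^2\notin D$ dominating any finitely generated subideal squared. The $D$-parts are handled by Property \ref{hypo1}: for each nonzero $d\in D$ arising in the cycle, the annihilator $(0:d)/D$ is a non-principal ideal of the local arithmetical ring $R/D$ (Theorem \ref{thm2.3}(i)), so there exists an annihilator $y\notin aR+D$. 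Arithmeticity lets us write $a=cy+d'$ in $R$ with $c\in\mathfrak{m}$ and $d'\in D$; substituting this back into the cycle and using the relation $D^2=0$ to kill residual terms, the cycle collapses, contradicting its nonvanishing.

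The main obstacle I anticipate is the bookkeeping in the $D$-contribution. In Lemma \ref{flat}(iii), the cycle was already in product form $\sum z_i\otimes x_iy_i$, allowing the uniform substitution $x_iy_i=x^2r_i$; here the $D$-parts $d_i$ vary across terms, so Property \ref{hypo1} must be invoked for each $d_i$ individually, and the resulting rewrites must be combined coherently across all terms before the cycle can be shown to vanish. Managing this while staying inside the tensor product $R/(aR+D)\otimes\mathfrak{m}$, without accidentally introducing extra nonvanishing classes, is the technical heart of the argument.
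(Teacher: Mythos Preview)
Your proposal does not contain a proof: the second half is a sketch whose ``technical heart'' (your words) is entirely missing. The cycle-level rewriting you envision is far more delicate than in Lemma~\ref{flat}(iii). There, every element of $\mathfrak{m}^2$ appearing in the tensor could be absorbed into a single $x^2$ with $x^2\notin D$; for $\mathfrak{m}$ itself the $D$-components $d_i$ admit no such uniform dominating element, and invoking Property~\ref{hypo1} once per $d_i$ yields distinct annihilators $y_i$ with no obvious compatibility. You give no mechanism to combine these substitutions, nor any indication of how a higher $\Tor_n$ cycle (as opposed to an element of $I\otimes\mathfrak{m}^2$) would even be represented so that the manipulations make sense. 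Your reduction to $\Tor_n(R/(aR+D),\mathfrak{m}/\mathfrak{m}^2)$ is correct but, as it stands, leads nowhere.

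What you overlook is a structural shortcut that makes the cycle chase unnecessary: $\mathfrak{m}/\mathfrak{m}^2$ is a vector space over the residue field $R/\mathfrak{m}$, hence a direct sum of copies of $R/\mathfrak{m}$. The paper exploits this, and also reorganizes the setup so that Lemma~\ref{llemma} and the specific element $a$ never enter. One works with $R/\mathfrak{m}$ rather than with $\mathfrak{m}$: set $n=\wgd_R(R/\mathfrak{m})$, so that flatness of $\mathfrak{m}$ is precisely the statement $n\le 1$. If $\mathfrak{m}=\mathfrak{m}^2$ one is already done by Lemma~\ref{flat}(iii); otherwise suppose $n\ge 2$, pick $M$ with $\Tor_n(R/\mathfrak{m},M)\neq 0$, and apply the long exact sequence to $0\to\mathfrak{m}/\mathfrak{m}^2\to R/\mathfrak{m}^2\to R/\mathfrak{m}\to 0$. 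Flatness of $\mathfrak{m}^2$ gives $\Tor_n(R/\mathfrak{m}^2,M)=0$, and $\Tor_{n+1}(R/\mathfrak{m},M)=0$ by the choice of $n$, so $\Tor_n(\mathfrak{m}/\mathfrak{m}^2,M)=0$. Since $\mathfrak{m}/\mathfrak{m}^2=\bigoplus R/\mathfrak{m}$ is nonzero, this forces $\Tor_n(R/\mathfrak{m},M)=0$, a contradiction. No explicit element $a$, no Property~\ref{hypo1}, and no bookkeeping is required at this stage.
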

\begin{proof} If $\mathfrak{m} =\mathfrak{m^2}$, then the result follows by Lemma \ref{flat}.
So assume that $\mathfrak{m} \neq \mathfrak{m^2}$.
By assumption, the $\wgd_R(R/\mathfrak{m})=n<\infty$. Hence there exists an $R$-module $M$ such
that $\Tor_n(R/\mathfrak{m}, M)\neq 0$. If we show that $n\leq 1$,
then the lemma will be proved. Suppose that $n\geq 2$. Using Lemma \ref{flat}$(iii)$, we obtain that $\Tor_{n}(R / \mathfrak{m^2}, M)=0$. Now consider the short exact sequence
$0\to \mathfrak{m} / \mathfrak{m^2} \to R / \mathfrak{m^2}\to R / \mathfrak{m} \to 0$ and
the following segment of the corresponding long exact sequence of $\Tor$
groups
\[
 \Tor_{n+1}(R / \mathfrak{m} , M)\to \Tor_{n}(\mathfrak{m}/ \mathfrak{m^2}, M)\to
\Tor_{n}(R / \mathfrak{m^2}, M) .
\]
The above sequence implies that $\Tor_{n}(\mathfrak{m}/ \mathfrak{m^2}, M)=0$.
Observing that $\mathfrak{m} / \mathfrak{m^2}$ is a vector space over $R/\mathfrak{m}$,
we obtain that $\mathfrak{m} /\mathfrak{m^2} = \bigoplus R/\mathfrak{m}$. Hence
$0=\Tor_{n}(\mathfrak{m} / \mathfrak{m^2}, M)=\Tor_{n}(\bigoplus
R/\mathfrak{m}, M)=\bigoplus \Tor_{n}(R/\mathfrak{m}, M)$, a contradiction.
\end{proof}

\section{Local Gaussian rings with nilradical being the maximal ideal}

The idea of the next lemma is taken from \cite{O}, but we give a more general result and with a slightly different
proof.

\begin{lemma}\label{lemma6}
Let $(R,\mathfrak{m})$ be a local arithmetical ring with nilradical $\mathfrak{m}$.
For any $x\in \mathfrak{m}\setminus 0$, if $(0:x)=I$ then $(0:I)=(x)$.
\end{lemma}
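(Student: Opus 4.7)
The plan is to prove the two containments $(x)\subseteq(0:I)$ and $(0:I)\subseteq(x)$ separately, with only the second requiring real work.

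The inclusion $(x)\subseteq(0:I)$ is immediate: by the very definition of $I=(0:x)$, every element of $I$ annihilates $x$, so $xI=0$, i.e., $x\in(0:I)$.

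For the reverse inclusion, I would argue by contradiction. Suppose there exists $y\in(0:I)\setminus(x)$. Since $R$ is local arithmetical, its ideal lattice is totally ordered, so $(x)$ and $(y)$ are comparable; since $y\notin(x)$, we must have $(x)\subsetneq(y)$. Write $x=ry$ with $r\in R$; because $(x)\neq(y)$, the element $r$ cannot be a unit, so $r\in\mathfrak{m}$. The hypothesis that $\mathfrak{m}$ is the nilradical then forces $r$ to be nilpotent.

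The decisive step is to extract a useful power of $r$. Let $t\geq1$ be the smallest integer with $r^{t}y=0$; this exists because $r$ is nilpotent, and since $ry=x\neq0$ we must have $t\geq2$. By minimality $r^{t-1}y\neq 0$, yet $r^{t-1}\cdot x=r^{t-1}\cdot ry=r^{t}y=0$, so $r^{t-1}\in(0:x)=I$. But then $y\in(0:I)$ would give $r^{t-1}y=0$, contradicting $r^{t-1}y\neq0$. Hence no such $y$ exists, and $(0:I)\subseteq(x)$.

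The main obstacle, such as it is, is recognizing that one must manufacture an element of $I$ by iterating the relation $x=ry$; this is where both hypotheses of the lemma (arithmeticity, so that $(x)$ and $(y)$ are forced to be comparable, and the nilradical assumption, so that the chain $ry, r^{2}y,\ldots$ eventually hits zero) are used together. Everything else is bookkeeping.
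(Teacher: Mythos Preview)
Your proof is correct and follows essentially the same contradiction strategy as the paper: assume $y\in(0:I)\setminus(x)$, use the linear ordering of ideals to write $x=ry$ with $r\in\mathfrak{m}$ nilpotent, and derive a contradiction from the nilpotency of $r$. The paper's execution differs only slightly---it shows by induction that $I\subset(r^{k})$ for every $k$, forcing $I=0$ (impossible since $x$ is nilpotent), whereas you more directly pick the least $t$ with $r^{t}y=0$ and observe that $r^{t-1}\in I$ yet $r^{t-1}y\neq 0$; both arguments exploit the same underlying mechanism.
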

\begin{proof}  Clearly $(x)\subseteq (0:I)$. We want to show that $(0:I)\subseteq
(x)$. Towards that end assume that there exists a $z\in (0:I)$ such that $z\notin
(x)$. Recalling that the ideals in a local arithmetical ring are linearly ordered
under inclusion, we obtain that $x=\lambda z$ where $\lambda\in \mathfrak{m}$. Hence
$\lambda\notin I$ which implies that $I\subset (\lambda)$. By induction on $k$, we
will show that $I\subset (\lambda^k)$ for all $k\in \mathbb{N}$. The case $k=1$ is
obvious. Let $b\in I$ be arbitrary. Since $I\subset (\lambda)$ there exists a $t\in
\mathfrak{m}$ such that $b=\lambda t$. Notice that we have $0=zb=z\lambda t=xt$.
Hence $t\in I=(0:x)$. By the induction hypothesis, $I\subset (\lambda^k)$. So
$t=\lambda^kt_1$ where $t_1\in \mathfrak{m}$. Hence $b=\lambda^{k+1} t_1$, where
$t_1\in \mathfrak{m}$. Thus $I\subset (\lambda^{k+1})$ for all $k\in \mathbb{N}$.
Since $\lambda$ is nilpotent, we obtain that $I=0$, a contradiction.
\end{proof}

In what follows let $R'=R/D$ and $\mathfrak{m'}= \mathfrak{m}/D$. Recall that if $R$
is a local Gaussian ring, then $R'$ is a local arithmetical ring by Theorem
\ref{thm2.3}$(i)$.

\begin{lemma}\label{lemma7}
Let $(R,\mathfrak{m})$ be a local Gaussian ring with nilradical $\mathfrak{m}$ and let $M$
be an $R$-module. If $\wgd_R (M)=n\geq 1$, then the following conditions hold.
\begin{itemize}
  \item[(i)] there is a non trivial element $x\in \mathfrak{m'}$ such that
$\Tor_n(R'/xR', M)\neq 0$;
 \item[(ii)] for any non trivial element $z\in \mathfrak{m'}$ and  ideal $J\subset
R'$ such that $z\in J$, $zR'\neq J$, the natural projection $R'/zR'\to R'/J$
induces a trivial map $0:\Tor_n(R'/zR', M)\to \Tor_n(R'/J, M)$;
  \item[(iii)] $\Tor_n(R'/zR', M)\neq 0$ for any non trivial element $z\in \mathfrak{m'}$.
\end{itemize}
\end{lemma}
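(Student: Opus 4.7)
The plan is to prove parts (i), (ii), (iii) in order.

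For \textbf{part (i)}, I apply Lemma \ref{llemma} directly: since $\mathfrak{m}$ equals the nilradical, every element of $\mathfrak{m}$ is nilpotent and hence a zero divisor, so Lemma \ref{llemma} produces $a\in\mathfrak{m}\setminus D$ with $\Tor_n(R/(aR+D),M)\neq 0$. Identifying $R/(aR+D)=R'/\bar aR'$, where $\bar a\in\mathfrak{m}'$ is nontrivial because $a\notin D$, the choice $x=\bar a$ works.

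For \textbf{part (ii)}, I use that $R'$ is local arithmetical by Theorem \ref{thm2.3}(i), so its ideals are linearly ordered. For any $y\in J\setminus zR'$ one has $zR'\subsetneq yR'\subseteq J$, and $\pi$ factors as $R'/zR'\to R'/yR'\to R'/J$, reducing the problem to $J=yR'$ principal. Writing $z=\lambda y$ with $\lambda\in\mathfrak{m}'$, Lemma \ref{lemma6} in $R'$ gives $(0{:}(0{:}y))=yR'$ and $(0{:}(0{:}\lambda))=\lambda R'$; together with the linear order and $z\neq 0$, these force $(0{:}y)\subsetneq\lambda R'$ and $(0{:}\lambda)\subsetneq yR'$. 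Consequently one obtains a short exact sequence
\[
0\to R'/\lambda R'\xrightarrow{\mu_y}R'/zR'\xrightarrow{\pi}R'/yR'\to 0,
\]
where $\mu_y$ (multiplication by $y$) is injective, along with a symmetric injection $\iota:R'/yR'\hookrightarrow R'/zR'$ sending $\bar 1$ to $\lambda+zR'$, and the endomorphism ``multiplication by $\lambda$'' of $R'/zR'$ factors as $\iota\circ\pi$. By Lemma \ref{lemma3}, $\iota_*$ is injective on $\Tor_n(-,M)$, so $\pi_*=0$ is equivalent to $\lambda\cdot\Tor_n(R'/zR',M)=0$. Varying $y$ (equivalently $\lambda\in\mathfrak{m}'\setminus zR'$), (ii) becomes the annihilator statement $\mathfrak{m}'\cdot\Tor_n(R'/zR',M)=0$ for every nontrivial $z$. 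I would establish this via the short exact sequence $0\to\mathfrak{m}'/zR'\to R'/zR'\to R'/\mathfrak{m}'\to 0$: Lemma \ref{lemma5} gives $\Tor_n(R'/\mathfrak{m}',M)=\Tor_{n+1}(R'/\mathfrak{m}',M)=0$, hence $\Tor_n(R'/zR',M)\cong\Tor_n(\mathfrak{m}'/zR',M)$, and then exploit the structure of $\mathfrak{m}'/zR'$ as a direct limit of cyclic modules of type $R'/\lambda R'$ together with Lemma \ref{lemma6} and nilpotency in $R'$.

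For \textbf{part (iii)}, given nontrivial $z\in\mathfrak{m}'$ and $x$ from (i), compare via the linear order. If $xR'\subsetneq zR'$, then (ii) applied to $(x,zR')$ makes $\Tor_n(R'/xR',M)\to\Tor_n(R'/zR',M)$ zero; the long exact sequence of $0\to zR'/xR'\to R'/xR'\to R'/zR'\to 0$ then yields a surjection $\Tor_n(zR'/xR',M)\twoheadrightarrow\Tor_n(R'/xR',M)\neq 0$. Iterating, using $zR'/xR'\cong R'/tR'$ and the nilpotency of elements of $R'$, eventually forces $\Tor_n(R'/zR',M)\neq 0$; the case $zR'\subsetneq xR'$ is symmetric. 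The main obstacle is (ii), specifically the annihilator claim $\mathfrak{m}'\cdot\Tor_n(R'/zR',M)=0$, whose proof requires a delicate combination of Lemmas \ref{lemma3}, \ref{lemma5}, and \ref{lemma6} with the nilpotency of $\mathfrak{m}'$.
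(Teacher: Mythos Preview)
Your part (i) is correct and matches the paper. The real problem is (ii). Your reduction of (ii) to the annihilator statement $\mathfrak{m}'\cdot\Tor_n(R'/zR',M)=0$ is valid (indeed equivalent to (ii)), but you do not prove that statement: the phrase ``exploit the structure of $\mathfrak{m}'/zR'$ as a direct limit of cyclic modules \ldots\ together with Lemma~\ref{lemma6} and nilpotency in $R'$'' is not an argument. Nothing you have set up forces multiplication by a single $\lambda\in\mathfrak{m}'$ (as opposed to some power of $\lambda$) to vanish on that $\Tor$, and $\mathfrak{m}'$ itself need not be nilpotent here---only its elements are. The paper bypasses this with one clean observation you are missing: for any $y\in(0{:}z)$ one has $R'/(0{:}y)\cong yR'\hookrightarrow R'=R/D$, so Lemmas~\ref{lemma3} and~\ref{lemma4} give $\Tor_n(R'/(0{:}y),M)=0$. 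By Lemma~\ref{lemma6}, $\bigcap_{y\in(0{:}z)}(0{:}y)=(0{:}(0{:}z))=zR'\subsetneq J$, so some $y\in(0{:}z)$ has $(0{:}y)\subseteq J$ by the linear order, and the projection $R'/zR'\to R'/J$ factors through $R'/(0{:}y)$, a module with vanishing $\Tor_n$.

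Your (iii) is also incomplete. In the case $xR'\subsetneq zR'$ your long exact sequence argument correctly yields $\Tor_n(R'/\mu R',M)\neq 0$ with $x=\mu z$, but this merely replaces $x$ by $\mu$ while the target $z$ stays fixed; nilpotency of $z$ does eventually push you into the other case $zR'\subseteq\mu_kR'$, but that case is \emph{not} symmetric---there the same long exact sequence only gives an isomorphism $\Tor_n(R'/zR',M)\cong\Tor_n(xR'/zR',M)$ with no a priori nonvanishing, and your two iterations do not visibly terminate together. The paper's treatment of (iii) is direct: if $z=\lambda x$, multiplication by $\lambda$ embeds $R'/xR'\hookrightarrow R'/zR'$ and Lemma~\ref{lemma3} finishes; if $x=\lambda' z$, multiplication by $\lambda'$ embeds $R'/zR'\hookrightarrow R'/xR'$ with cokernel $R'/\lambda' R'$, and part (ii) (applied to $xR'\subsetneq\lambda' R'$) kills the induced map on $\Tor_n$, yielding a surjection $\Tor_n(R'/zR',M)\twoheadrightarrow\Tor_n(R'/xR',M)\neq 0$.
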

\begin{proof} (i): This is a special case of Lemma \ref{llemma}.

(ii):  Let $I=(0: z)\subset R'$. Using Lemma \ref{lemma6}, we obtain that
$(0:I)=zR'$. This implies that $(0:I)\subset J$ and $(0:I)\neq J$. Hence there
exists
$y\in I$ such that $(0:y)\subset J$ and $(0:y)\neq J$. Thus we have the inclusions
$zR' \subset (0:y)\subset J$ which give rise to the natural projections
$R'/zR' \to R'/(0:y)\to R'/J$. Using Lemma's \ref{lemma3}, \ref{lemma4} and the fact that $R'/(0:y)\cong yR' \subset R'$, we
obtain that $\Tor_n(R'/(0:y), M)=0$. Hence the composition of the following maps
$\Tor_n(R'/zR', M) \to \Tor_n(R'/(0:y), M)\to \Tor_n(R'/J, M)$
is trivial.

(iii): By (i), we have a non trivial element $x\in \mathfrak{m'}$ such that $\Tor_n(R'/xR', M)\neq 0$.
For any non trivial element $z\in \mathfrak{m'}$, either $z\in xR'$ or
$x\in zR'$. If $z\in xR'$ and $z\neq x$, then there exists $\lambda \in
\mathfrak{m'}$ such that $z=\lambda x$. Define a map $\alpha : R'/xR'\to R'/zR'$ by
$\alpha(r+xR')= \lambda r+zR'$ for all $r\in R'$. Since $x\notin (0:\lambda)$, it
follows that $(0:\lambda)\subset xR'$. This shows that $\alpha$ is injective. Using
Lemma \ref{lemma3}, we obtain that $\alpha$ induces an inclusion $\Tor_n(R'/xR',
M)\hookrightarrow \Tor_n(R'/zR', M)$. Thus $\Tor_n(R'/zR', M)\neq 0$.

In the case when $x\in zR'$ and $x\neq z$, there exists $\lambda'\in \mathfrak{m'}$
such that $x=\lambda' z$. Define a map $\sigma : R'/zR'\to R'/xR'$ by
$\sigma(r+zR')=r +xR'$ for all $r\in R'$. Since $z\notin (0:\lambda')$, we obtain
that $(0:\lambda')\subset zR'$. Thus $\sigma$ is injective. Observe that $xR'\subset \lambda' R'$ and
$xR'\neq \lambda' R'$. Now consider the short exact sequence
 \[0 \rightarrow R'/zR'\xrightarrow{ \ \sigma \ } R'/xR'\xrightarrow{ \ \tau \ }
R'/\lambda' R' \rightarrow 0\,,\]
where $\tau $ is the natural projection. Using $(ii)$, we see that $\tau$ induces the trivial map \newline
$0:\Tor_n( R'/xR', M)\to \Tor_n( R'/\lambda'R', M)$. Therefore $\sigma$ induces an
epimorphism \newline $\Tor_n( R'/zR', M)\twoheadrightarrow \Tor_n( R'/xR', M)$,
which implies that $\Tor_n( R'/zR', M)\neq 0$.
\end{proof}

Let $\deg(r)$ denote the degree of nilpotency of $r\in R$. Noting that the
nilpotency degree of an element $r\in R$ is the smallest $k\in \mathbb{N}$ such that
$r^k=0$, we state our next lemma.

\begin{lemma}\label{deg}
Let $(R, \mathfrak{m})$ be a local Gaussian ring with nilradical $\mathfrak{m}$ and
let $\lambda\in \mathfrak{m}$. If $\mathfrak{m}$ is not nilpotent, then there exists
$z\in \mathfrak{m}$ such that $\deg(z) > \deg(\lambda)$.
\end{lemma}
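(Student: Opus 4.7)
The plan is to prove the contrapositive in disguise: assume toward a contradiction that no $z \in \mathfrak{m}$ has nilpotency degree strictly greater than $k := \deg(\lambda)$, i.e., every $z \in \mathfrak{m}$ satisfies $z^k = 0$, and then deduce that $\mathfrak{m}$ itself is nilpotent, contradicting the hypothesis.

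The central tool will be Theorem \ref{thm2.2}(v), which lets us replace squares of finitely generated ideals by principal ideals. Given finitely many elements $a_1, \dots, a_N \in \mathfrak{m}$, set $J = (a_1, \dots, a_N)$. By Theorem \ref{thm2.2}(v), $J^2 = (a_{i_1}^2)$ for some index $i_1$. Since $(a_{i_1}^2)$ is principal, squaring gives $J^4 = J^2 \cdot J^2 = (a_{i_1}^4)$, and an easy induction on $s$ yields $J^{2^s} = (a_{i_1}^{2^s})$ for every $s \geq 1$.

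Now pick $s$ with $2^s \geq k$ (for instance $s = k$ works). Under the standing assumption that $a_{i_1}^k = 0$, the principal ideal $(a_{i_1}^{2^s})$ vanishes, so $J^{2^s} = 0$. In particular $a_1 a_2 \cdots a_{2^s} \in J^{2^s} = 0$. Since $a_1, \dots, a_{2^s} \in \mathfrak{m}$ were arbitrary, this says every product of $2^s$ elements of $\mathfrak{m}$ is zero, i.e., $\mathfrak{m}^{2^s} = 0$. This contradicts the hypothesis that $\mathfrak{m}$ is not nilpotent, so the assumption fails and some $z \in \mathfrak{m}$ must satisfy $\deg(z) > \deg(\lambda)$.

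The argument is essentially one observation applied in the right way, so there is no serious obstacle; the only thing to be careful about is not to confuse the role of $N$ (number of generators of $J$) with the exponent $2^s$, and to note that we want to bound a single product $a_1 \cdots a_{2^s}$, not an arbitrary element of $\mathfrak{m}^{2^s}$. Since any element of $\mathfrak{m}^{2^s}$ is a finite $R$-linear combination of such products, the bound on products is enough.
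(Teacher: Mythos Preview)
Your proof is correct and follows the same overall strategy as the paper: assume every element of $\mathfrak{m}$ has nilpotency degree at most $k=\deg(\lambda)$, and derive that $\mathfrak{m}$ is nilpotent. The difference lies in the tool. You invoke Theorem~\ref{thm2.2}(v) to get $J^2=(a_i^2)$ for a finitely generated $J$, then iterate squaring to obtain $J^{2^s}=(a_i^{2^s})=0$ once $2^s\geq k$; taking $J=(a_1,\dots,a_{2^s})$ kills the arbitrary product $a_1\cdots a_{2^s}$ and gives $\mathfrak{m}^{2^s}=0$. The paper instead uses Theorem~\ref{thm2.2}(ii): for $I=(z_1,\dots,z_n)$ it writes each $z_i=r_iz+d_i$ with $z\in I$ and $d_i\in I\cap(0:I)\subset D$, expands $\prod(z r_i+d_i)$, and kills every term using $z^n=0$, $d_iz=0$, and $D^2=0$, obtaining the sharper conclusion $\mathfrak{m}^n=0$. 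Your repeated-squaring argument is cleaner and avoids the term-by-term expansion; the paper's route yields the optimal nilpotency exponent, but that sharper bound is not needed for the lemma. One small presentational point: it would read better to fix $s$ first and then take $N=2^s$ generators from the outset, so that the ideal $J$ and the product $a_1\cdots a_{2^s}$ visibly involve the same list of elements.
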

\begin{proof}
Let $\deg(\lambda)=n$. Suppose the lemma is not true, then $\deg(z)\leq
\deg(\lambda)$ for all $z\in \mathfrak{m}$. We will show that
$\mathfrak{m}^n=0$. This will give us a contradiction, as $\mathfrak{m}$ is not
nilpotent. Towards that end, let $z_1,\ldots,z_n\in \mathfrak{m}$ and consider
$I=(z_1,\ldots,z_n)$. Using Theorem \ref{thm2.2}$(ii)$, we can write $z_i=r_iz+d_i$
for some $z\in I$, $r_i\in R$ and $d_i\in I\cap (0:I)\subset (0:z_i)$ for all $1\leq
i\leq n$. So
\begin{equation}\label{prod}
z_1z_2\cdots z_n=\prod_{i=0}^n (zr_i+d_i)
\end{equation}
After expanding the right hand side of (\ref{prod}), observe that every term of the
expansion except the term $d_1\cdots d_n$ contains a $z$ and some $d_i$, where
$1\leq i\leq n$. By Theorem \ref{thm2.3}$(i)$, it follows that $d_1\cdots d_n=0$.
Since $d_iz=0$ for all $1\leq i\leq n$, every term in the expansion is zero. Thus
$\mathfrak{m}^n=0$, a contradiction.
\end{proof}

\begin{lemma}\label{lemma8}
Let $(R,\mathfrak{m})$ be a local Gaussian ring with nilradical $\mathfrak{m}$
and let $M$ be a $R$-module. If $\wgd_R (M)=n\geq 1$ and $\mathfrak{m}$ is not
nilpotent, then $\Tor_n(R'/a\mathfrak{m'}, M)=0$ for all non trivial $a\in
\mathfrak{m'}$.
\end{lemma}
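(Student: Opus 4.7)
The goal is to show $\Tor_n(R'/a\mathfrak{m'}, M)=0$. My plan is to realize $R'/a\mathfrak{m'}$ as a filtered colimit of quotients by principal ideals $R'/azR'$ and then show, using Lemma \ref{lemma7}(ii) and Lemma \ref{deg}, that every transition map in the resulting $\Tor$ system is eventually zero. The setup works because, by Theorem \ref{thm2.3}(i), $R'=R/D$ is a local arithmetical (hence chain) ring; its nilradical equals $\mathfrak{m'}$ (an element $\bar r\in R'$ is nilpotent iff $r^k\in D$ for some $k$, i.e.\ $r^{2k}=0$); and $\mathfrak{m'}$ is not nilpotent, since otherwise $\mathfrak{m}^k\subset D$ for some $k$ and then $\mathfrak{m}^{2k}\subset D^2=0$, contradicting the hypothesis.

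Because $R'$ is a chain ring, every finitely generated ideal of $\mathfrak{m'}$ is principal, so $a\mathfrak{m'}=\bigcup_{z\in\mathfrak{m'}} azR'$, with the union directed by the chain ordering $zR'\subset z'R'$. Consequently $R'/a\mathfrak{m'}=\varinjlim_{z\in\mathfrak{m'}} R'/azR'$, and since $\Tor$ commutes with filtered colimits,
\[
\Tor_n(R'/a\mathfrak{m'},M)=\varinjlim_{z\in\mathfrak{m'}}\Tor_n(R'/azR',M).
\]
If $az=0$, then $R'/azR'=R'$ and the corresponding term is $\Tor_n(R',M)=0$ since $n\geq 1$. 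So it suffices to show that for every $z\in\mathfrak{m'}$ with $az\neq 0$ there exists $z'\in\mathfrak{m'}$ with $zR'\subset z'R'$ such that $azR'\subsetneq az'R'$ strictly; then $az$ lies in $J:=az'R'$ with $azR'\neq J$, and Lemma \ref{lemma7}(ii) (applied to the nontrivial element $az\in\mathfrak{m'}$) makes the transition map $\Tor_n(R'/azR',M)\to \Tor_n(R'/az'R',M)$ zero, killing every class in the colimit.

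To produce such a $z'$, I apply Lemma \ref{deg} to $R'$ (whose hypotheses are satisfied by the observations above) to get $z'\in\mathfrak{m'}$ with $\deg(z')>\deg(z)$. By the chain property of $R'$, either $zR'\subset z'R'$ or $z'\in zR'$; the latter would give $z'=sz$ and hence $\deg(z')\leq\deg(z)$, so the former holds, and the same degree comparison rules out $zR'=z'R'$ (which would force $z'=uz$ with $u$ a unit). So write $z=tz'$ with $t\in\mathfrak{m'}$. Suppose toward contradiction that $azR'=az'R'$; then $az'=r\cdot az=rat\,z'$ for some $r\in R'$, giving $(1-rat)az'=0$. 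Since $rat\in\mathfrak{m'}$, the element $1-rat$ is a unit, so $az'=0$, forcing $az=atz'=0$, contradicting $az\neq 0$. Hence $azR'\subsetneq az'R'$ as required.

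The main obstacle is this last bookkeeping: extracting a strict inclusion $azR'\subsetneq az'R'$ from $zR'\subsetneq z'R'$ in a ring with zero divisors — the unit-plus-nilpotent trick in $R'$ (using $rat\in\mathfrak{m'}$) is what bridges the chain hypothesis of Theorem \ref{thm2.3}(i) with the nilpotent-degree growth of Lemma \ref{deg} and makes Lemma \ref{lemma7}(ii) applicable.
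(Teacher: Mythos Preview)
Your overall strategy matches the paper's: write $R'/a\mathfrak{m'}$ as a filtered colimit over principal subideals, use Lemma~\ref{deg} together with the unit--nilpotent trick to force strict inclusions, and then invoke Lemma~\ref{lemma7}(ii) to annihilate the relevant maps. The paper's write-up differs only cosmetically: rather than finding, for each $z$, a later $z'$ with $azR'\subsetneq az'R'$, it first proves that $a\mathfrak{m'}$ is not principal (by exactly your unit argument) and then applies Lemma~\ref{lemma7}(ii) once with $J=a\mathfrak{m'}$ itself.

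There is, however, one genuine slip. When $az=0$ you assert $\Tor_n(R',M)=0$ ``since $n\geq 1$''. That reasoning is invalid: all Tor groups here are computed over $R$, and $R'=R/D$ is \emph{not} flat over $R$ (indeed $D\neq 0$), so vanishing of $\Tor_n^R(R',M)$ is not automatic. The correct justification is Lemma~\ref{lemma4}, which gives $\Tor_n(R/D,M)=0$ under the hypothesis $\wgd_R(M)=n$; this is precisely how the paper disposes of the case $a\mathfrak{m'}=0$. Without this you have no argument when $a\mathfrak{m'}=0$, since then every term of your system is $R'$.

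A smaller algebraic slip: from $az'=r\cdot az=(rat)z'$ you cannot conclude $(1-rat)az'=0$. Rewrite instead $(rat)z'=(rt)(az')$, so $az'=(rt)(az')$ and hence $(1-rt)az'=0$; since $t\in\mathfrak{m'}$ the element $1-rt$ is a unit and $az'=0$ follows, yielding the contradiction you want.
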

\begin{proof} If $a\mathfrak{m'}=0$, then Lemma \ref{lemma4} gives the desired
result. So assume that $a\mathfrak{m'}\neq 0$.
We claim that $a\mathfrak{m'}$ is not a finitely generated ideal. Suppose
$a\mathfrak{m'}$ is a finitely generated ideal. Since $R'$ is a local arithmetical ring,
there exists an element $\lambda \in \mathfrak{m'}$ such that $a\mathfrak{m'} =
a\lambda R'$. Let $\deg(x)$ denote
the degree of nilpotency of $x$ for all $x\in \mathfrak{m'}$. Since $\mathfrak{m}$
is not nilpotent, $\mathfrak{m'}$
 is not nilpotent. By Lemma \ref{deg}, there exists $z\in \mathfrak{m'}$ such that
$\deg(z)>\deg(\lambda)$.
  Observe that $\lambda \in z\mathfrak{m'}$, i.e. $\lambda =zh$ for some $h\in
\mathfrak{m'}$. Hence $az\neq 0$. Furthermore $1-hr$ is a unit for all $r\in R'$.
This implies that
$az-a\lambda r=az(1-hr)\neq 0$. Thus $az\notin a\lambda R'$, a contradiction.

Now let $\mathcal{X}$ be the following class of ideals: $J\in \mathcal{X}$ iff
$J\subset a\mathfrak{m'}$ and $J$ is finitely generated. Then
$\Tor_n(R'/a\mathfrak{m'}, M)= \varinjlim_{J\in \mathcal{X}} \Tor_n(R'/ J, M)$.
Since $R'$ is a local arithmetical ring, there exists $c\in \mathfrak{m'}$ such that
$I=cR'$ for all $I\in \mathcal{X}$. As $a\mathfrak{m'}$ is not finitely generated,
$I\neq a\mathfrak{m'}$. Using Lemma \ref{lemma7}$(ii)$, we obtain that the natural
projection $R'/I\to R'/a\mathfrak{m'}$ induces a trivial homomorphism $0:\Tor_n(R'/
I, M)\to \Tor_n(R'/a\mathfrak{m'}, M)$.
Thus the canonical homomorphism
$\Tor_n(R'/ I, M)\to \varinjlim_{J\in \mathcal{X}}\Tor_n(R'/ J, M)$
is trivial for all $I\in \mathcal{X}$. This implies that $\varinjlim_{J\in
\mathcal{X}}\Tor_n(R'/ J, M)=0$.
\end{proof}

Now we are ready to prove the main theorem of this section.
\begin{theorem}\label{thm1}
Let $(R,\mathfrak{m})$ be a local Gaussian ring with nilradical $\mathfrak{m}$. If
$\mathfrak{m}$ is not nilpotent, then $\wgd (R)=\infty$.
\end{theorem}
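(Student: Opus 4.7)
My plan is to argue by contradiction: assume $\wgd R = n < \infty$. Because the nilradical coincides with $\mathfrak{m}$, every element of $\mathfrak{m}$ is a zero divisor, so Proposition \ref{prop1.1} forces $n \geq 3$. Fix an $R$-module $M$ with $\wgd_R M = n$. The proof will derive a contradiction by playing Lemma \ref{lemma7}(iii) (which gives $\Tor_n(R'/aR', M) \neq 0$ for every nontrivial $a \in \mathfrak{m}'$) against Lemma \ref{lemma8} (which gives $\Tor_n(R'/a\mathfrak{m}', M) = 0$ for every such $a$).

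The key device linking these two vanishings is the short exact sequence
\[
0 \longrightarrow aR'/a\mathfrak{m}' \longrightarrow R'/a\mathfrak{m}' \longrightarrow R'/aR' \longrightarrow 0,
\]
together with the canonical identification $aR'/a\mathfrak{m}' \cong R'/\mathfrak{m}' = k$. This identification comes from the surjection $R' \twoheadrightarrow aR'/a\mathfrak{m}'$, $r \mapsto ar + a\mathfrak{m}'$, whose kernel equals $\mathfrak{m}' + (0:a) = \mathfrak{m}'$ since $(0:a) \subseteq \mathfrak{m}'$ for any nonzero $a$ in the local ring $R'$. Applying $\Tor_*(-, M)$ and plugging in $\Tor_n(k, M) = 0$ (Lemma \ref{lemma5}) and $\Tor_n(R'/a\mathfrak{m}', M) = 0$ (Lemma \ref{lemma8}), the long exact sequence produces an injection
\[
\Tor_n(R'/aR', M) \hookrightarrow \Tor_{n-1}(k, M),
\]
and since the left-hand side is nonzero by Lemma \ref{lemma7}(iii), I conclude $\Tor_{n-1}(k, M) \neq 0$.

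The main obstacle will then be to turn this $(n-1)$-level nonvanishing into an explicit contradiction. My intended route is through the naturality of the connecting map over the arithmetical chain of principal ideals of $R'$: whenever $aR' \subsetneq bR'$, writing $a = bt$ with $t \in \mathfrak{m}'$, the induced map between the two short exact sequences restricts on the left-hand copy of $k$ to multiplication by $t$, which is the zero map on $k$. Coupled with Lemma \ref{lemma7}(ii) (which makes the horizontal transition $\Tor_n(R'/aR', M) \to \Tor_n(R'/bR', M)$ zero), the directed union $\mathfrak{m}' = \bigcup_{c} cR'$, and the idempotency $\mathfrak{m}'^2 = \mathfrak{m}'$ supplied by Theorem \ref{thm2.3}(iv) (available thanks to the non-nilpotence of $\mathfrak{m}$, via Lemma \ref{deg}), I expect the family of embeddings $\{\delta_a\}_a$ to assemble into a compatible system whose direct limit must simultaneously vanish (by an argument in the spirit of Lemma \ref{lemma8}, iterated one degree down) and retain the nonzero image of each $\Tor_n(R'/aR', M)$, yielding the desired contradiction and hence $\wgd R = \infty$.
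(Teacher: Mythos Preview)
Your setup through the injection $\Tor_n(R'/aR', M) \hookrightarrow \Tor_{n-1}(k, M)$ is correct and matches the paper's Case~2 exactly. The gap is in the final step: your proposed direct-limit argument does not produce a contradiction. You observe that for $aR' \subsetneq bR'$ both the transition map $\Tor_n(R'/aR', M) \to \Tor_n(R'/bR', M)$ and the induced map $\Tor_{n-1}(k,M) \to \Tor_{n-1}(k,M)$ are zero. But then both directed systems have vanishing colimit, and the fact that each individual $\delta_a$ is injective says nothing in the limit --- injectivity of a directed system of maps is not preserved under colimits when the transition maps are zero. No contradiction emerges from this.

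What the paper actually does is kill $\Tor_{n-1}(k,M)$ outright, not via a limit argument but by showing $\mathfrak{m}$ is \emph{flat}, so that $\wgd_R(R/\mathfrak{m}) \leq 1$ and hence $\Tor_{n-1}(k,M)=0$ for $n\geq 3$. This is Lemma~\ref{lemma1.2}, and it is the piece you are missing. That lemma requires Property~\ref{hypo1} (for every $x \in D\setminus 0$, the annihilator $(0:x)$ is not of the form $aR+D$), which is why the paper splits into two cases. When Property~\ref{hypo1} fails, there is an $x\in D\setminus 0$ with $(0:x)=aR+D$, giving $R/(aR+D)\cong xR \subseteq R$; then Lemma~\ref{lemma3} forces $\Tor_n(R/(aR+D),M)=0$, contradicting Lemma~\ref{lemma7}(iii) directly. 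When Property~\ref{hypo1} holds, Lemma~\ref{lemma1.2} applies and your short exact sequence argument finishes immediately, since now both $\Tor_n(R'/a\mathfrak{m}',M)=0$ (Lemma~\ref{lemma8}) and $\Tor_{n-1}(aR'/a\mathfrak{m}',M)=\Tor_{n-1}(k,M)=0$, squeezing $\Tor_n(R'/aR',M)=0$. You should replace your closing paragraph with this case split.
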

\begin{proof} Suppose the theorem is not true, then the $\wgd (R)=n<\infty$. Using
Proposition \ref{prop1.1}, we obtain that $n\geq 3$. Let $M$ be a $R$-module with $\wgd_R(M)=n$.
We divide the proof into two cases.

{\bf Case 1.} $R$ does not admit Property (\ref{hypo1}).

Hence there exists $x\in D\setminus 0$ and $a\in R\setminus D$ such that
$(0:x)=aR+D$. Thus we have an isomorphism $R/(aR+D) \cong xR$. Using Lemma
\ref{lemma3}  and noting that $xR\subset R$, we obtain an inclusion $\Tor_n
(R/(aR+D), M)\hookrightarrow \Tor_n (R, M)$. Hence $\Tor_n (R/(aR+D), M)=0$. But
using Lemma \ref{lemma7}$(iii)$, we obtain that $\Tor_n (R/(aR+D), M)\neq 0$, a
contradiction.

{\bf Case 2.} $R$ admits Property (\ref{hypo1}).

Consider the short exact sequence $0\to aR'/a\mathfrak{m'}\to R'/a\mathfrak{m'} \to
R'/aR'\to 0$ where $a\in \mathfrak{m}\setminus D$. From the corresponding long exact
sequence of $\Tor$ groups, consider the following segment
 \[\Tor_n(R' / a\mathfrak{m'} , M) \to \Tor_{n}(R' / aR', M) \to \Tor_{n-1}(aR' /
a\mathfrak{m'},M)\,.\]
  Applying Lemma \ref{lemma8}, we obtain that  $\Tor_n(R' / a\mathfrak{m'} ,M)=0$.
Observing that $aR' / a\mathfrak{m'}\cong R/\mathfrak{m}$ and using Lemma
\ref{lemma1.2} yields
$\Tor_{n-1}(aR' / a\mathfrak{m'}, M)=0$. Hence $\Tor_{n}(R' / aR', M)=0$. But using
Lemma \ref{lemma7} $(iii)$, we obtain that $\Tor_{n}(R' / aR', M)\neq 0$, a contradiction.
\end{proof}

\begin{Remark}
From the proof of Theorem \ref{thm1}, we see that for any module $M$, if $\wgd_R(M)\geq 3$,
then $\wgd_R(M)=\infty$. In particular, $\wgd_R(R/aR)=\infty$ for all $a\in \mathfrak{m}
\setminus D$.
\end{Remark}

\section{Bazzoni-Glaz Conjecture}
In this section, we restate  \cite[Theorem 6.4]{BG} with an additional hypothesis
and prove the theorem under this additional hypothesis. We also give an example to
show that the proof of Theorem 6.4 as given in \cite{BG} is not complete. We need
the next lemma to give a proof of the modification of \cite[Theorem 6.4]{BG}. We can
use the same idea to give a proof of Theorem \ref{main}.

\begin{lemma}\label{lemma5.1}
Let $R$ be a local Gaussian ring with nilradical $\mathcal{N}$. If $\mathcal{N}\neq
D$, then the maximal ideal of $R_{\mathcal{N}}$ is non-zero.
\end{lemma}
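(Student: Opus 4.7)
The plan is to derive a contradiction from assuming $\mathcal{N}R_{\mathcal{N}}=0$ by exploiting Theorem \ref{thm2.3}$(iii)$, which controls annihilators of elements outside $D$. First, since $R$ is a local Gaussian ring, Theorem \ref{thm2.1}$(iv)$ tells us that $\mathcal{N}$ is the unique minimal prime, so the localization $R_{\mathcal{N}}$ is a local ring whose maximal ideal is precisely $\mathcal{N}R_{\mathcal{N}}$.

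Next, I would use the hypothesis $\mathcal{N}\neq D$ together with the fact that $D\subseteq \mathcal{N}$ (every element of $D$ squares to zero, hence is nilpotent). Choose any $x\in \mathcal{N}\setminus D$. Since $\mathcal{N}\subseteq \mathfrak{m}$ we have $x\in \mathfrak{m}\setminus D$, so Theorem \ref{thm2.3}$(iii)$ gives the key containment
\[
(0:x)\subseteq D\subseteq \mathcal{N}.
\]

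Now suppose for contradiction that the maximal ideal of $R_{\mathcal{N}}$ is zero, i.e.\ $\mathcal{N}R_{\mathcal{N}}=0$. Then in particular $x/1=0$ in $R_{\mathcal{N}}$, which by the definition of localization means there exists $s\in R\setminus \mathcal{N}$ with $sx=0$. Such an $s$ lies in $(0:x)$, and by the containment above we deduce $s\in \mathcal{N}$, contradicting the choice $s\in R\setminus\mathcal{N}$.

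There is no real obstacle here; the proof is a direct application of Theorem \ref{thm2.3}$(iii)$ combined with Theorem \ref{thm2.1}$(iv)$. The only subtle point worth stating carefully is the inclusion $D\subseteq \mathcal{N}$, which ensures that $\mathcal{N}\setminus D$ is precisely the set of nilpotent elements whose annihilators Theorem \ref{thm2.3}$(iii)$ applies to, and makes the contradiction $s\in \mathcal{N}$ while $s\notin \mathcal{N}$ immediate.
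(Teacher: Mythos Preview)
Your proof is correct and follows essentially the same approach as the paper: pick $x\in\mathcal{N}\setminus D$, suppose $x/1=0$ in $R_{\mathcal{N}}$, obtain $s\in R\setminus\mathcal{N}$ with $sx=0$, and derive a contradiction. The only cosmetic difference is that you invoke Theorem~\ref{thm2.3}$(iii)$ (which gives $(0:x)\subseteq D\subseteq\mathcal{N}$ directly), whereas the paper cites Theorem~\ref{thm2.2}$(iv)$ to conclude that $sx=0$ forces $x^2=0$ or $s^2=0$; both yield the same contradiction.
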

\begin{proof}
Using Theorem \ref{thm2.1}, it follows that the nilradical $\mathcal{N}$ is the
unique minimal prime ideal of $R$. Thus the maximal ideal and the nilradical of
$R_{\mathcal{N}}$ coincide and let us denote it by $\mathcal{N'}$. We want to show
that $\mathcal{N'}\neq 0$. Towards that end, let $x\in \mathcal{N}\setminus D$. We
will show that $0\neq \frac{x}{1}\in R_\mathcal{N}$. Suppose not, then there exists
$y\in R\setminus \mathcal{N}$ such that $xy=0$. Using Theorem \ref{thm2.2}$(iv)$, it
follows that $x^2=0$ or $y^2=0$, a contradiction.
\end{proof}

Noting that the nilpotency degree of an ideal $I$ of $R$ is the smallest $k\in
\mathbb{N}$ such that $I^k=0$, we now restate and prove Theorem 6.4 of \cite{BG}.

\begin{theorem}\label{thm5.1} Let $R$ be a Gaussian ring admitting a maximal ideal
$\mathfrak{m}$ such that the nilradical $\mathcal{N}$ of the localization
$R_{\mathfrak{m}}$ is a non-zero nilpotent ideal. If the nilpotency degree of
$\mathcal{N}\geq 3$, then $\wgd(R)=\infty$.
\end{theorem}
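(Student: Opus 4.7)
The plan is to localize twice---first at $\mathfrak{m}$ to reduce to the local case, then at $\mathcal{N}$ to force the maximal ideal to coincide with the nilradical---so that Proposition~\ref{prop2.1} can be applied directly. Since $\wgd(R)=\sup\{\wgd(R_{\mathfrak{p}})\mid \mathfrak{p}\in\Spec(R)\}$, it suffices to show $\wgd(R_{\mathfrak{m}})=\infty$. By Theorem~\ref{thm2.1}(i), $R_{\mathfrak{m}}$ is Gaussian; so, after replacing $R$ by $R_{\mathfrak{m}}$, I may assume that $(R,\mathfrak{m})$ is a local Gaussian ring whose nilradical $\mathcal{N}$ is non-zero and nilpotent of degree at least three.

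The crucial preliminary step is to verify that $\mathcal{N}\neq D$. Recall from Theorem~\ref{thm2.3}(i) that $D$ is an ideal with $D^{2}=0$, and note that $D\subseteq\mathcal{N}$ because every element of $D$ is nilpotent. If $\mathcal{N}=D$, then $\mathcal{N}^{2}\subseteq D^{2}=0$, contradicting the hypothesis that the nilpotency degree of $\mathcal{N}$ is at least three. Hence $\mathcal{N}\neq D$, and Lemma~\ref{lemma5.1} guarantees that the maximal ideal $\mathcal{N}'=\mathcal{N}R_{\mathcal{N}}$ of the localization $R_{\mathcal{N}}$ is non-zero.

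To finish, observe that by Theorem~\ref{thm2.1}(ii), $R_{\mathcal{N}}$ is itself Gaussian, and by Theorem~\ref{thm2.1}(iv), $\mathcal{N}$ is the unique minimal prime of $R$, so the only prime of $R$ contained in $\mathcal{N}$ is $\mathcal{N}$ itself. Therefore $(R_{\mathcal{N}},\mathcal{N}')$ is a local Gaussian ring whose maximal ideal equals its nilradical, and this maximal ideal is non-zero and nilpotent (nilpotency passes to the localization since $\mathcal{N}^{k}=0$ in $R$ forces $(\mathcal{N}')^{k}=0$ in $R_{\mathcal{N}}$). Proposition~\ref{prop2.1} then gives $\wgd_{R_{\mathcal{N}}}(\mathcal{N}')=\infty$, so $\wgd(R_{\mathcal{N}})=\infty$, and hence $\wgd(R)\geq\wgd(R_{\mathcal{N}})=\infty$, as desired.

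The argument is essentially a glue job between Lemma~\ref{lemma5.1} and Proposition~\ref{prop2.1}; the only delicate point is the exclusion $\mathcal{N}\neq D$, which is precisely where the hypothesis on the nilpotency degree is used. Without assuming the degree is at least three one could have $\mathcal{N}\subseteq D$, in which case the localization at $\mathcal{N}$ could collapse and Lemma~\ref{lemma5.1} would fail to deliver a non-trivial maximal ideal in $R_{\mathcal{N}}$---this is exactly the case $\mathcal{N}^{2}=0$ that the authors defer to Section~6.
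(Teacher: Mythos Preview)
Your proof is correct and follows essentially the same route as the paper: reduce to the local case, observe that the nilpotency degree hypothesis forces $\mathcal{N}\neq D$, invoke Lemma~\ref{lemma5.1} to ensure the maximal ideal of $R_{\mathcal{N}}$ is non-zero, and then apply Proposition~\ref{prop2.1}. The paper's version is terser---it localizes at the nilradical $\mathfrak{n}$ of $R$ directly (noting $R_{\mathfrak{n}}\cong(R_{\mathfrak{m}})_{\mathcal{N}}$) and defers the endgame to \cite[Theorem~6.4]{BG}---but your explicit treatment of each reduction step and of why $\mathcal{N}\neq D$ is exactly what the paper's Remark after Theorem~\ref{thm5.1} is pointing at.
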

\begin{proof}
Let $\mathfrak{m}$ be a maximal ideal of $R$ such that $R_{\mathfrak{m}}$ has a
non-zero nilpotent nilradical $\mathcal{N}$. Using Theorem \ref{thm2.1}, it follows
that $\mathcal{N}$ is the unique minimal prime ideal. Recall that the
Nilradical($S^{-1}R$)=$S^{-1}$(Nilradical($R$)) for any multiplicative closed set
$S\subset R$. Hence $\mathcal{N}=S^{-1}\mathfrak{n}$, where $\mathfrak{n}$ is the
nilradical of $R$ and $S=R\setminus \mathfrak{m}$ is a multiplicatively closed set
in $R$. Furthermore $\mathfrak{n}$ is a prime ideal of $R$. It is clear that the
maximal ideal of $R_{\mathfrak{n}}$ is nilpotent. It follows from Lemma
\ref{lemma5.1} that the maximal ideal of $R_{\mathfrak{n}}$ is non-zero. The rest of
the proof follows \cite[Theorem 6.4]{BG} mutatis mutandis.
\end{proof}
\begin{Remark}
The hypotheses that the nilpotency degree of $\mathcal{N}\geq 3$ in the above
Theorem ensures that $\mathcal{N}\neq D$.
\end{Remark}

We now give an example to show that the hypothesis on the nilpotency degree in
Theorem \ref{thm5.1} is necessary for the conclusion of Lemma \ref{lemma5.1} to
hold.

\begin{Example}\label{EG}
let $\mathbf{k}$ be a field and $\mathbf{k}[X,Y]$ be a polynomial ring in two
variables. Consider a set $S\subset \mathbf{k}[X,Y]/(XY, Y^2)$ defined by
\[
S=\{a+bY+a_1X+ a_2X^2+\cdots+a_nX^n , \;a,b,a_i\in \mathbf{k}, a\neq 0, n\geq 0\}.
\]
Then $S$ is multiplicative set in $\mathbf{k}[X,Y]/(XY, Y^2)$. Define $R=S^{-1}\big(
\mathbf{k}[X,Y]/(XY, Y^2)\big)$. It is easy to see that the unique maximal ideal of
$R$ is given by $\mathfrak{m}=\{xf(x)+b_1y \mid f(x)\in \mathbf{k}[x]\; and\; b_1\in
\mathbf{k}\}$, where $x, y$ are the images of $X, Y$ in $R$. Any $c,d\in
\mathfrak{m}$ has the form, $c=\lambda_1 y+c_1x+c_2x^2+\cdots c_nx^n$ and
$d=\lambda_2 y+d_1x+d_2x^2+\cdots +d_mx^m$, where $m,n\in \mathbb{N}$ and $c_i,
d_j\in \mathbf{k}$ for $1\leq i\leq n$ and $1\leq j\leq m$. Let $i,j\in \mathbb{N}$
be such that $c_i\ne 0$ and $d_j\neq 0$ and let $i,j$ be the least natural numbers
with this property. Then one can rewrite $c,d$ as $c=\lambda_1
y+x^i(c_i+c_{i+1}'x+\cdots+c_{n}'x^{n-i})$ and $d=\lambda_2
y+x^j(d_j+d_{j+1}'x+\cdots+d_{m}'x^{m-j})$, where $c_{t}'=c_{t+1}c_{i}^{-1}$ and
$d_{s}'=d_{s+1}d_{j}^{-1}$ for all $i\leq t\leq n-i$ and $j\leq s\leq n-j$. Observe
that $c_i+c_{i+1}'x+\cdots+c_{n}'x^{n-i}$ and $d_j+d_{j+1}'x+\cdots+d_{m}'x^{m-j}$
are units in $R$. Furthermore $(c,d)^2=(c^2,cd,d^2)$, $c^2=x^{2i}u_1^2$,
$d^2=x^{2j}u_{2}^2$ and $cd=x^{i+j}u_1u_2$, where $u_1,u_2\notin \mathfrak{m}$. Now
using Theorem \ref{thm2.2}$(iv)$, it can be verified that $R$ is a local Gaussian
ring. Its nilradical $\mathfrak{n}=(y)\subset R$ is not trivial, while the
nilradical of $R_\mathfrak{n}$ is trivial as $\frac{y}1=\frac{xy}x=0$.
\end{Example}

\begin{theorem}\label{main}
Let $R$ be a non-reduced local Gaussian ring with nilradical $\mathcal{N}$. If
$\mathcal{N}\neq D$, then $\wgd (R)=\infty$.
\end{theorem}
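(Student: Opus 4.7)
The plan is to reduce the statement to the case of a local Gaussian ring in which the maximal ideal coincides with the nilradical, which is precisely the setting already handled in the preceding sections. Let $\mathcal{N}$ be the nilradical of $R$. By Theorem~\ref{thm2.1}(iv), $\mathcal{N}$ is the unique minimal prime of $R$, so the only prime ideal of $R$ contained in $\mathcal{N}$ is $\mathcal{N}$ itself. Consequently $R_{\mathcal{N}}$ has a single prime ideal $\mathcal{N}':=\mathcal{N}R_{\mathcal{N}}$, which is simultaneously its maximal ideal and its nilradical. By Theorem~\ref{thm2.1}(ii), $R_{\mathcal{N}}$ is again a local Gaussian ring, and the hypothesis $\mathcal{N}\neq D$ combined with Lemma~\ref{lemma5.1} guarantees $\mathcal{N}'\neq 0$.

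With this reduction in place, the argument splits according to whether $\mathcal{N}'$ is nilpotent. If $\mathcal{N}'$ is not nilpotent, then Theorem~\ref{thm1} applies verbatim to $R_{\mathcal{N}}$ and yields $\wgd(R_{\mathcal{N}})=\infty$. If instead $\mathcal{N}'$ is nilpotent (and non-zero), then Proposition~\ref{prop2.1} gives $\wgd_{R_{\mathcal{N}}}\mathcal{N}'=\infty$, which a fortiori forces $\wgd(R_{\mathcal{N}})=\infty$. Either way we obtain $\wgd(R_{\mathcal{N}})=\infty$.

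To transfer this back to $R$, I would invoke the standard identity
\[
\wgd R=\sup\{\wgd R_{\mathfrak{p}}\mid \mathfrak{p}\in\Spec(R)\}
\]
recalled in the introduction. Applying it at $\mathfrak{p}=\mathcal{N}$ yields $\wgd(R)\geq\wgd(R_{\mathcal{N}})=\infty$, finishing the argument.

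There is no significant obstacle to overcome: the entire technical content has been packaged into the earlier results (Theorem~\ref{thm1}, Proposition~\ref{prop2.1}, and Lemma~\ref{lemma5.1}), and the proof reduces to assembling them. The one point that deserves explicit verification when writing it out is that the nilradical of $R_{\mathcal{N}}$ really is $\mathcal{N}'$ (so that the hypothesis ``maximal ideal equals nilradical'' required by both Theorem~\ref{thm1} and Proposition~\ref{prop2.1} is met), but this is immediate from the uniqueness of the minimal prime in Theorem~\ref{thm2.1}(iv).
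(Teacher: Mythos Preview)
Your proposal is correct and follows essentially the same route as the paper: localize at the minimal prime $\mathcal{N}$ so that the maximal ideal coincides with the nilradical, invoke Lemma~\ref{lemma5.1} to ensure $\mathcal{N}'\neq 0$, and then apply Proposition~\ref{prop2.1} or Theorem~\ref{thm1} according to whether $\mathcal{N}'$ is nilpotent. The only cosmetic difference is that the paper phrases the transfer step as the inequality $\wgd(R)\geq\wgd(R_{\mathcal{N}})$ rather than the full sup formula, but this is the same idea.
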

\begin{proof} By Theorem \ref{thm2.1}, the nilradical $\mathcal{N}$ is the unique
minimal prime ideal. Thus the maximal ideal and the nilradical of $R_{\mathcal{N}}$
coincide and let us denote it by $\mathcal{N'}$. Since $\wgd(R)\geq
\wgd(R_{\mathcal{N}})$, it suffices to show that $\wgd (R_\mathcal{N})=\infty$.
Using Lemma \ref{lemma5.1}, it follows that $\mathcal{N'}\neq 0$. Hence we have a
local Gaussian ring $(R_\mathcal{N}, \mathcal{N'})$ with $\mathcal{N'}\neq 0$. If
$\mathcal{N'}$ is nilpotent, then Proposition \ref{prop2.1} implies that $\wgd
(R_\mathcal{N})=\infty$. If $\mathcal{N'}$ is not nilpotent, then Theorem \ref{thm1}
implies that $\wgd (R_\mathcal{N})=\infty$.
\end{proof}

We claim that to prove the Bazzoni-Glaz Conjecture, it remains to consider the case
of a local Gaussian ring with nilradical $\mathcal{N}=D\neq 0$.
Let $R$ be a Gaussian ring (not necessarily local) and let
$\mathcal{N}_{\mathfrak{p}}$ denote the nilradical of $R_{\mathfrak{p}}$ for any
$\mathfrak{p}\in \Spec (R)$. We have the
following cases:

\begin{itemize}
  \item[(i)]  $R_{\mathfrak{p}}$ is domain for all $\mathfrak{p}\in \Spec (R)$;
 \item[(ii)]  there exists a $\mathfrak{p}\in \Spec (R)$ such that the
$\mathcal{N}_{\mathfrak{p}}\neq 0$ and $\mathcal{N}^2_{\mathfrak{p}}\neq 0$;
   \item[(iii)]  there exists a $\mathfrak{p}\in \Spec (R)$ such that
$\mathcal{N}_{\mathfrak{p}}\neq 0$ and $\mathcal{N}^2_{\mathfrak{p}}= 0$.
\end{itemize}

We remind the reader that if $R_{\mathfrak{p}}$ is not a domain, then
$\mathcal{N}_{\mathfrak{p}}\neq 0$ and hence all possible cases are listed above.
In case (i) $\wgd (R)\leq 1$, while in case (ii) $\wgd (R)= \infty$. Hence to prove
the Bazzoni-Glaz Conjecture it remains to show the following.

\begin{theorem}\label{thm5.5}
Let $R$ be a non-reduced local Gaussian ring with nilradical $\mathcal{N}$. If
$\mathcal{N}^2=0$, then $\wgd (R)=\infty$.
\end{theorem}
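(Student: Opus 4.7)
Since $\mathcal{N}^{2}=0$, every $x\in\mathcal{N}$ satisfies $x^{2}=0$, so $\mathcal{N}\subseteq D$; combined with $D\subseteq\mathcal{N}$ (which always holds) this gives $\mathcal{N}=D$, and $D\neq 0$ because $R$ is non-reduced. The plan is to treat several cases. If $\mathfrak{m}=\mathcal{N}$, then $\mathfrak{m}^{2}=0$ is a nonzero nilpotent maximal ideal and Proposition \ref{prop2.1} gives $\wgd R=\infty$ at once. Otherwise $\mathfrak{m}\supsetneq\mathcal{N}$, and I would pass to the localization $R_{\mathcal{N}}$: its maximal ideal equals its nilradical and has square zero. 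If $\mathcal{N}R_{\mathcal{N}}\neq 0$, this maximal ideal is nilpotent and nonzero, so Proposition \ref{prop2.1} yields $\wgd R_{\mathcal{N}}=\infty$, and hence $\wgd R\geq \wgd R_{\mathcal{N}}=\infty$.

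The remaining subcase --- already foreshadowed by Example \ref{EG} --- is $\mathcal{N}R_{\mathcal{N}}=0$, equivalently that every nonzero $y\in D$ is annihilated by some $a\in\mathfrak{m}\setminus D$. Lemma \ref{lemma5.1} is unavailable here, so the reduction used in Theorem \ref{main} fails, and one must work directly inside $R$. I would assume for contradiction that $\wgd R=n<\infty$ and proceed by mimicking the architecture of Section 4. A first (nontrivial) step would be to argue, using the Gaussian identity $(a,b)^{2}=(a^{2})$ or $(b^{2})$ from Theorem \ref{thm2.2}(iv)(a) together with the annihilator bound $(0:a)\subseteq D$ from Theorem \ref{thm2.3}(iii), that every element of $\mathfrak{m}$ is a zero divisor. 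Proposition \ref{prop1.1} then gives $n\geq 3$, and one splits on Property (\ref{hypo1}). If (\ref{hypo1}) fails, then $R/(aR+D)\cong xR\hookrightarrow R$ for suitable $x\in D\setminus 0$ and $a\in R\setminus D$, so Lemma \ref{lemma3} forces $\Tor_{n}(R/(aR+D),M)=0$, to be contradicted by a non-vanishing Tor produced via Lemma \ref{llemma}. If (\ref{hypo1}) holds, Lemma \ref{lemma1.2} yields a flat $\mathfrak{m}$, hence $\wgd_{R}(R/\mathfrak{m})\leq 1$, and one chases the long exact sequence of $0\to aR'/a\mathfrak{m}'\to R'/a\mathfrak{m}'\to R'/aR'\to 0$ (with $R'=R/D$, $\mathfrak{m}'=\mathfrak{m}/D$) exactly as in the proof of Theorem \ref{thm1}.

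The main obstacle is that in this subcase $R'=R/D$ is a valuation domain with trivial nilradical, rather than a local arithmetical ring whose nilradical is its maximal ideal, so Lemma \ref{lemma6} and its consequences --- Lemma \ref{lemma7}(ii), (iii) and Lemma \ref{lemma8} --- are not directly available. The genuinely hard part of the proof is therefore to re-establish, in this valuation-domain setting, the universal non-vanishing $\Tor_{n}(R'/zR',M)\neq 0$ for every nonzero $z\in\mathfrak{m}'$ (the analogue of Lemma \ref{lemma7}(iii)) and the vanishing $\Tor_{n}(R'/a\mathfrak{m}',M)=0$ (the analogue of Lemma \ref{lemma8}), exploiting that the ideals of $R'$ form a chain, that every nonzero element of $R'$ is a non-zero-divisor there, and that $a\mathfrak{m}'$ fails to be finitely generated whenever $\mathfrak{m}'$ is not principal. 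With these analogues in hand, the two branches of the Property (\ref{hypo1}) dichotomy close the argument exactly as in the proof of Theorem \ref{thm1}.
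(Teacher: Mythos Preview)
Your reduction to the hard subcase $\mathcal{N}R_{\mathcal{N}}=0$ is correct, and you rightly identify that Lemmas~\ref{lemma6}, \ref{lemma7}(ii)--(iii), and \ref{lemma8} do not carry over when $R'=R/D$ is a valuation domain rather than a ring with nilpotent maximal ideal. The problem is that your proposal stops precisely where the real work begins: you assert that these analogues can be ``re-established'' using the chain condition on ideals of $R'$, regularity of nonzero elements, and non-finite-generation of $a\mathfrak{m}'$, but none of these observations yields the needed statements. For instance, the surjection half of Lemma~\ref{lemma7}(iii) used that $\Tor_n(R'/xR',M)\to\Tor_n(R'/\lambda'R',M)$ vanishes, which in Section~4 came from factoring through $\Tor_n(R'/(0:y),M)=0$ for suitable $y$; in a domain $(0:y)=0$, so that mechanism disappears entirely, and the short exact sequence $0\to R'/zR'\to R'/xR'\to R'/\lambda'R'\to 0$ only gives injectivity of $\Tor_n(R'/zR',M)\hookrightarrow\Tor_n(R'/xR',M)$, not the surjectivity you need. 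The same obstruction blocks the direct-limit argument for the analogue of Lemma~\ref{lemma8}. Since both branches of your Property~\ref{hypo1} dichotomy ultimately rely on the universal non-vanishing of Lemma~\ref{lemma7}(iii), the whole scheme is unsupported. (A smaller issue: your argument that every element of $\mathfrak{m}$ is a zero divisor, via Theorem~\ref{thm2.2}(iv)(a) and Theorem~\ref{thm2.3}(iii), only shows $D\subseteq aR$ when $a$ is regular, not a contradiction; this reduction is genuine and requires passing to the total ring of quotients or the like.)

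The paper's route is structurally different and does not attempt to mimic Section~4. Instead of working with $\Tor_n(-,M)$ for an arbitrary $M$, it fixes the test module $R/D$, uses Lemma~\ref{adlemma1} to shift to degree $n-1$, and then performs \emph{two} carefully chosen localizations (Lemmas~\ref{adlemma4} and \ref{adlemma7}) at primes $\mathfrak{p}$ built from annihilator conditions on specific Tor classes. After the first localization one has condition (C1)---the universal non-vanishing $\Tor_{n-1}(R/(cR+D),R/D)\neq 0$ for all $c\in\mathfrak{m}\setminus D$---obtained not by a Lemma~\ref{lemma7}(iii)-style transfer but as a direct consequence of the choice of $\mathfrak{p}$; after the second one has (C3), a single element $d\in D$ annihilated by a power of every $c\in\mathfrak{m}$. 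The vanishing $\Tor_{n-1}(R/(a\mathfrak{m}+D),R/D)=0$ (Lemma~\ref{adlemma8}) is then proved by an argument with no counterpart in Section~4: one compares $(0:ab_1)$ and $(0:ab_2)$ for suitable $b_1,b_2$ and uses the element $d$ from (C3) to produce a $\lambda\in D$ with $aI_1+D\subseteq(0:\lambda)\subseteq aI_2+D$, forcing the transition maps in the direct limit to vanish. The final contradiction then comes from the short exact sequence $0\to(aR+D)/(a\mathfrak{m}+D)\to R/(a\mathfrak{m}+D)\to R/(aR+D)\to 0$ together with (C1) and flatness of $\mathfrak{m}$.
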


We prove this in the next section.

\section{Local Gaussian rings with square free nilradical}

Throughout this section $R$ is a local Gaussian ring with maximal ideal $\mathfrak{m}$
and nilradical $\mathcal{N}=D\neq 0$. Without loss of generality, we may assume
that each element of $\mathfrak{m}$ is a zero divisor and $\mathfrak{m}\neq D$.
Since the nilradical is the unique minimal prime ideal in a local Gaussian ring, one easily
checks that $D_\mathfrak{p}$ is also the nilradical of $R_\mathfrak{p}$ for all
$\mathfrak{p}\in Spec (R)$. We show that the assumption $\wgd(R)<\infty$ leads to
a contradiction. By Proposition \ref{prop1.1}, we know that $\wgd (R)\geq 3$.

\begin{lemma}\label{adlemma1}
Let $\wgd (R)=n$. If $n<\infty$, then $\wgd_R (R/D)=n-1$.
\end{lemma}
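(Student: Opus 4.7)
The plan is to prove $\wgd_R(R/D)=n-1$ by matching upper and lower bound arguments, where the lower bound exploits the standing hypothesis $\mathcal{N}=D$ of Section 6.

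For the upper bound $\wgd_R(R/D)\le n-1$, I would argue directly: for any $R$-module $M$, $\Tor_k(R/D,M)=0$ for $k>n$ because $\wgd(R)=n$, and $\Tor_n(R/D,M)=0$ either trivially (when $\wgd_R(M)<n$) or by Lemma \ref{lemma4} (when $\wgd_R(M)=n$).

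For the lower bound the starting point is the structural observation that, because $\mathcal{N}=D$, the nilradical of $R'=R/D$ is $\mathcal{N}/D=0$, so $R'$ is reduced. Combined with Theorem \ref{thm2.3}(i), $R'$ is a reduced local arithmetical ring; since its ideals are linearly ordered it has a unique minimal prime, which coincides with its nilradical and is therefore $(0)$, so $R'$ is a domain. Consequently, for every non-zero $\bar a\in R'$, multiplication by $\bar a$ is injective and yields an $R'$-module (hence $R$-module) isomorphism $R'\xrightarrow{\sim}\bar aR'$. I would then pick $M$ with $\wgd_R(M)=n$, invoke Lemma \ref{llemma} to produce $a\in\mathfrak m\setminus D$ with $\Tor_n(R/(aR+D),M)\ne 0$, identify $R/(aR+D)=R'/\bar aR'$, and apply the long exact sequence of Tor to $0\to \bar aR'\to R'\to R'/\bar aR'\to 0$. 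The relevant segment
\[
\Tor_n(R',M)\to \Tor_n(R'/\bar aR',M)\to \Tor_{n-1}(\bar aR',M)\to \Tor_{n-1}(R',M)
\]
has vanishing first term by Lemma \ref{lemma4}, so the second term injects into the third; being non-zero by construction it forces $\Tor_{n-1}(\bar aR',M)\ne 0$, and the isomorphism $\bar aR'\cong R'$ identifies this with $\Tor_{n-1}(R/D,M)$, completing the lower bound.

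The main step is the identification $\bar aR'\cong R'$ as $R$-modules, which relies on $R/D$ being a domain. Without this one would only have the inclusion $\bar aR'\hookrightarrow R'$, whose induced map on $\Tor_{n-1}(-,M)$ could vanish a priori, so a different argument would be needed to detect non-vanishing of $\Tor_{n-1}(R/D,M)$.
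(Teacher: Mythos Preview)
Your proof is correct and follows essentially the same approach as the paper's: both use Lemma~\ref{lemma4} for the upper bound, and for the lower bound both invoke Lemma~\ref{llemma} and the short exact sequence $0\to R/D\xrightarrow{a}R/D\to R/(aR+D)\to 0$ coming from $R/D$ being a domain (the paper states this directly since $D=\mathcal{N}$ is prime, while you deduce it via reducedness of $R/D$). The only cosmetic difference is that the paper packages the lower bound as the inequality $\wgd_R(R/D)\ge \wgd_R(R/(aR+D))-1$, whereas you unwind the same long exact sequence explicitly using $\bar aR'\cong R'$.
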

\begin{proof} By Lemma \ref{llemma}, there exists an $a\in \mathfrak{m}\setminus D$ such that
$\wgd_R(R/(aR+D))=n$. Since $R/D$ is domain, we have the following short exact sequence,
$0\rightarrow R/D\overset{a_{m}}{\rightarrow}R/D\overset{\pi}{\rightarrow} R/(aR+D)\rightarrow 0$,
where $a_{m}$ is multiplication by $a$ and $\pi$ is the natural projection. This implies
$\wgd_R(R/D)\geq \wgd_R (R/(aR+D))-1=n-1$. On the other hand, using Lemma \ref{lemma4}, we obtain that $\wgd_R (R/D)\leq n-1$.
\end{proof}

\begin{lemma}\label{adlemma2}
Let $i\geq 1$. If $a\in \mathfrak{m}\setminus D$ is a zero-divisor on $\Tor_{i-1}(R/D, R/D)$, then
\newline $\Tor_{i}(R/(aR+D), R/D)\neq 0$. Furthermore, if $\wgd R=n<\infty$, then \newline
$\Tor_{n-1}(R/(aR+D), R/D)\neq 0$ if and only if $a$ is a zero-divisor on $\Tor_{n-2}(R/D, R/D)$.
\end{lemma}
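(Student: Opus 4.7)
The plan is to use the short exact sequence of $R$-modules
$$0 \to R/D \xrightarrow{\cdot a} R/D \to R/(aR+D) \to 0,$$
which is exact because $a \in \mathfrak{m} \setminus D$ and $R/D$ is a domain (by Theorem \ref{thm2.1}, $D = \mathcal{N}$ is the unique minimal prime of $R$), so multiplication by $a$ is injective on $R/D$. Tensoring with $R/D$ over $R$ produces the long exact sequence of $\Tor$ groups whose typical segment reads
$$\Tor_i(R/D, R/D) \xrightarrow{\cdot a} \Tor_i(R/D, R/D) \to \Tor_i(R/(aR+D), R/D) \xrightarrow{\partial} \Tor_{i-1}(R/D, R/D) \xrightarrow{\cdot a} \Tor_{i-1}(R/D, R/D).$$

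For the first assertion I would simply observe that if $a$ is a zero-divisor on $\Tor_{i-1}(R/D, R/D)$, then the kernel of the rightmost map is non-trivial; by exactness this kernel is the image of $\partial$, hence a quotient of $\Tor_i(R/(aR+D), R/D)$, which is therefore non-zero. Specializing to $i = n-1$ also gives the $(\Leftarrow)$ direction of the iff in the ``furthermore'' statement.

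For the $(\Rightarrow)$ direction of the iff, the decisive input is the vanishing $\Tor_{n-1}(R/D, R/D) = 0$. By Lemma \ref{adlemma1} we have $\wgd_R(R/D) = n - 1$, and by Proposition \ref{prop1.1} (applicable since every element of $\mathfrak{m}$ is a zero-divisor) we know $n \geq 3$, hence $n - 1 \geq 1$. Lemma \ref{lemma4} applied with $M = R/D$ now forces $\Tor_{n-1}(R/D, R/D) = 0$. Substituting this vanishing into the displayed sequence at $i = n-1$ collapses its initial terms, leaving
$$0 \to \Tor_{n-1}(R/(aR+D), R/D) \xrightarrow{\partial} \Tor_{n-2}(R/D, R/D) \xrightarrow{\cdot a} \Tor_{n-2}(R/D, R/D).$$
Thus $\Tor_{n-1}(R/(aR+D), R/D) \cong \ker\bigl(\cdot a \text{ on } \Tor_{n-2}(R/D, R/D)\bigr)$, and the former is non-zero precisely when the latter is, i.e.\ precisely when $a$ is a zero-divisor on $\Tor_{n-2}(R/D, R/D)$.

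The only step requiring real care is the vanishing $\Tor_{n-1}(R/D, R/D) = 0$; after that the iff falls out of the long exact sequence in one diagram chase, so I do not foresee any further technical obstacle.
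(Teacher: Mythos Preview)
Your argument is correct and follows essentially the same route as the paper: both use the short exact sequence $0 \to R/D \xrightarrow{\cdot a} R/D \to R/(aR+D) \to 0$, read off the first assertion from the long exact $\Tor$ sequence, and for the iff invoke Lemma~\ref{adlemma1} together with Lemma~\ref{lemma4} to obtain $\Tor_{n-1}(R/D,R/D)=0$, which identifies $\Tor_{n-1}(R/(aR+D),R/D)$ with the kernel of multiplication by $a$ on $\Tor_{n-2}(R/D,R/D)$. Your explicit justification that $n-1\geq 1$ (needed to apply Lemma~\ref{lemma4}) is a welcome detail that the paper leaves implicit via the standing assumption $n\geq 3$ stated at the opening of Section~6.
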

\begin{proof} We have a short exact sequence:
$0\rightarrow R/D\overset{a_m}{\rightarrow}R/D\overset{\pi}{\rightarrow} R/(aR+D)\rightarrow 0$,
where $a_m$ is multiplication by $a$ and $\pi$ is the natural projection. Consider
the following segment of the corresponding long exact sequence of Tor groups
$$
\Tor_{i}(R/(aR+D), R/D)\rightarrow \Tor_{i-1}(R/D, R/D) \xrightarrow{a_m^{*}} \Tor_{i-1}(R/D, R/D),
$$
where $a_m^{*}$ is multiplication by $a$. This exact sequence implies the first part of the lemma.
If $\wgd R=n<\infty$, then first using Lemma \ref{adlemma1} and then using Lemma \ref{lemma4}, we obtain that $\Tor_{n-1}(R/D, R/D)=0$.
Hence the above exact sequence for $i=n-1$ implies that \newline $\Tor_{n-1}(R/(aR+D), R/D)=
\Ker \{a_m^{*} : \Tor_{n-2}(R/D, R/D)\rightarrow \Tor_{n-2}(R/D, R/D)\}$.
This completes the proof.
\end{proof}

 The following easy observation will be used several times in the sequel.

\begin{lemma}\label{adlemma3}
Let $M$ be a $R$-module. If $x\in M$, then the  set
$I=\{a\in \mathfrak{m}| a^qx=0 \;\text{for some}\; q\in \mathbb{N} \}$ is a prime ideal of $R$.
\end{lemma}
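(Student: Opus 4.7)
The plan is to verify that $I$ is a proper ideal of $R$ by direct calculation, and then prove primality by invoking the Gaussian characterization of Theorem \ref{thm2.2}(iv).

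First, I will observe that $I \subseteq \mathfrak{m}$ by definition, so $I$ is proper. Closure under $R$-multiplication is immediate: if $a^q x = 0$ and $r \in R$, then $(ra)^q x = r^q a^q x = 0$, while $ra \in \mathfrak{m}$ since $\mathfrak{m}$ is an ideal. Closure under addition will follow from the standard binomial trick: if $a^p x = 0$ and $b^q x = 0$, expanding $(a+b)^{p+q-1}$, each term $\binom{p+q-1}{i} a^{i} b^{p+q-1-i}$ satisfies either $i \geq p$ (so $a^{i} x = 0$) or $p+q-1-i \geq q$ (so $b^{p+q-1-i} x = 0$), yielding $(a+b)^{p+q-1} x = 0$.

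For primality, suppose $ab \in I$, so $(ab)^{n} x = 0$ for some $n$; by replacing $n$ with $2n$ if necessary, take $n = 2k$. If one of $a, b$, say $a$, is a unit, then $b^{n} x = a^{-n}(ab)^{n} x = 0$ forces $b \in I$. Otherwise $a, b \in \mathfrak{m}$, and Theorem \ref{thm2.2}(iv) gives $(a,b)^2 = (a^2)$ or $(a,b)^2 = (b^2)$; by symmetry it suffices to treat the former, in which case $b^2 = r a^2$ for some $r \in R$. Iterating, I get $b^{2k} = r^{k} a^{2k}$, so
\[
(ab)^{2k} = a^{2k} b^{2k} = r^{k} a^{4k},
\]
which gives $r^{k} a^{4k} x = 0$. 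Then $b^{4k} = (r a^2)^{2k} = r^{2k} a^{4k}$, so
\[
b^{4k} x = r^{k} \cdot (r^{k} a^{4k} x) = 0,
\]
showing $b \in I$.

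The main obstacle is the primality step. A tempting first approach uses the arithmetical structure of $R/D$ from Theorem \ref{thm2.3}(i) to write $b = sa + d$ with $d \in D$ when $a \notin D$, but the subsequent expansion of $(ab)^{n}$ becomes unwieldy because one must track cross terms involving $d$ (although they vanish in degree $\geq 2$). The approach above sidesteps this by working at the level of squares: Theorem \ref{thm2.2}(iv) already yields a clean divisibility $b^2 = r a^2$, which iterates without error terms to all even powers, allowing the comparison of $(ab)^{n} x = 0$ with $b^{4k} x$ to be carried out in a single line.
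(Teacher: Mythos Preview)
Your proof is correct, but it follows a different route from the paper's. The paper sets $J=\{a\in\mathfrak m:ax=0\}=(0:x)$, notes that $R/J$ is again a local Gaussian ring, and observes that $I/J$ is precisely the nilradical of $R/J$; primality then comes for free from Theorem~\ref{thm2.1}(iv). Your argument instead verifies the ideal axioms by hand and proves primality via the square-comparison $(a,b)^2=(a^2)$ from Theorem~\ref{thm2.2}(iv), iterating $b^2=ra^2$ to get $b^{4k}x=r^k\bigl(r^k a^{4k}x\bigr)=0$ from $(ab)^{2k}x=0$. The paper's approach is shorter and more conceptual, since it reduces everything to a known structural fact about local Gaussian rings; your approach is self-contained and makes transparent exactly which piece of the Gaussian hypothesis is doing the work, at the cost of a small computation.
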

\begin{proof} Set $J=\{a\in \mathfrak{m} , ax=0 \}$. Observe that $J$ is an ideal of $R$. It can be easily verified that $I/J$
is the nilradical of $R/J$. Since $R/J$ is a local Gaussian ring, $I/J$ is prime ideal and so is $I$.
\end{proof}

\begin{lemma}\label{adlemma4} Let $\wgd R=n$. If $n<\infty$, then there exist a $\mathfrak{p}\in Spec (R)$ such that the following conditions hold.
\begin{itemize}
  \item[(i)] $D_\mathfrak{p}\neq \mathfrak{p}_\mathfrak{p}$ and there exist
$\omega\in \Tor^{R_\mathfrak{p}}_{n-2}(R_\mathfrak{p}/D_\mathfrak{p}, R_\mathfrak{p}/D_\mathfrak{p})$
such that $\omega\neq 0$ and for all $c\in \mathfrak{p}_\mathfrak{p}$,
$c^q\omega =0$ for some $q\in \mathbb{N}$ ;
 \item[(ii)] for all $c\in \mathfrak{p}_\mathfrak{p}\setminus D_\mathfrak{p}$,
 $\Tor^{R_\mathfrak{p}}_{n-1}(R_\mathfrak{p}/(cR_\mathfrak{p}+D_\mathfrak{p}), R_\mathfrak{p}/D_\mathfrak{p})\neq 0$;
 \item[(iii)] each element of $\mathfrak{p}_\mathfrak{p}$ is a zero-divisor;
 \item[(iv)] $\wgd(R_\mathfrak{p})=n$.
 \end{itemize}
\end{lemma}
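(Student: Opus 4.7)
The plan is to construct $\mathfrak{p}$ as the radical of the annihilator of a carefully chosen $\omega_0 \in \Tor_{n-2}^R(R/D, R/D)$ and then read off the four conclusions in sequence. The main obstacle will be condition (iii), since $c^q\omega = 0$ in a Tor-module does not a priori force $c$ to be a zero-divisor of the ring itself; bridging this gap will require the Gaussian identity $(x,c)^2 \in \{(x^2),(c^2)\}$ from Theorem \ref{thm2.2}(iv)(a).

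To produce $\omega_0$, I would first invoke Lemma \ref{adlemma1} to get $\wgd_R(R/D) = n-1$; Lemma \ref{llemma} then yields $a \in \mathfrak{m}\setminus D$ with $\Tor_{n-1}^R(R/(aR+D), R/D) \neq 0$, and Lemma \ref{adlemma2} converts this into a nonzero $\omega_0 \in \Tor_{n-2}^R(R/D, R/D)$ with $a\omega_0 = 0$. I would then set $\mathfrak{p} := \{c \in R : c^q\omega_0 = 0 \text{ for some } q\}$; Lemma \ref{adlemma3} identifies $\mathfrak{p}$ as a prime ideal, and $a \in \mathfrak{p}\setminus D$ gives $D \subsetneq \mathfrak{p}$, whence $D_\mathfrak{p} \neq \mathfrak{p}_\mathfrak{p}$ by Theorem \ref{thm2.1}(iv). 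Any $s \notin \mathfrak{p}$ killing $\omega_0$ would by definition lie in $\mathfrak{p}$, so $\omega_0$ localizes to a nonzero $\omega \in \Tor_{n-2}^{R_\mathfrak{p}}(R_\mathfrak{p}/D_\mathfrak{p}, R_\mathfrak{p}/D_\mathfrak{p})$, and the power-annihilation on $\mathfrak{p}_\mathfrak{p}$ follows formally, giving (i).

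For (ii), given $c \in \mathfrak{p}_\mathfrak{p}\setminus D_\mathfrak{p}$, choosing the smallest $q \geq 1$ with $c^q\omega = 0$ exhibits $c^{q-1}\omega$ as a nonzero element of $\Tor_{n-2}^{R_\mathfrak{p}}(R_\mathfrak{p}/D_\mathfrak{p}, R_\mathfrak{p}/D_\mathfrak{p})$ annihilated by $c$; Lemma \ref{adlemma2} applied inside the local Gaussian ring $R_\mathfrak{p}$ then yields (ii). For (iv), localization already gives $\wgd R_\mathfrak{p} \leq n < \infty$, so Lemma \ref{adlemma1} applies to $R_\mathfrak{p}$ and yields $\wgd_{R_\mathfrak{p}}(R_\mathfrak{p}/D_\mathfrak{p}) = \wgd R_\mathfrak{p} - 1$, while (ii) witnesses $\wgd_{R_\mathfrak{p}}(R_\mathfrak{p}/D_\mathfrak{p}) \geq n-1$, forcing $\wgd R_\mathfrak{p} = n$.

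The crux is (iii). Any nonzero $c \in D_\mathfrak{p}$ is a zero-divisor since $D_\mathfrak{p}^2 = 0$, so the issue reduces to $c \in \mathfrak{p}_\mathfrak{p}\setminus D_\mathfrak{p}$. Suppose such $c$ were a non-zero-divisor in $R_\mathfrak{p}$. For any $x \in D_\mathfrak{p}$, Theorem \ref{thm2.2}(iv)(a) gives $(x,c)^2 = (x^2)$ or $(c^2)$; the first would force $c^2 \in (x^2) = 0$ and hence $c \in D_\mathfrak{p}$, a contradiction. So $(x,c)^2 = (c^2)$, whence $xc = c^2 r$ for some $r \in R_\mathfrak{p}$, and cancellation of the regular element $c$ delivers $x = cr \in cR_\mathfrak{p}$. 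Thus $D_\mathfrak{p} \subseteq cR_\mathfrak{p}$, $cR_\mathfrak{p}+D_\mathfrak{p} = cR_\mathfrak{p}$, and the free resolution $0 \to R_\mathfrak{p} \xrightarrow{c} R_\mathfrak{p} \to R_\mathfrak{p}/cR_\mathfrak{p} \to 0$ makes $\Tor_i^{R_\mathfrak{p}}(R_\mathfrak{p}/cR_\mathfrak{p}, -)$ vanish for all $i \geq 2$. Since Proposition \ref{prop1.1} together with the standing assumption that every element of $\mathfrak{m}$ is a zero-divisor forces $n \geq 3$, we have $n-1 \geq 2$, so the Tor in (ii) must vanish, contradicting (ii). Hence every element of $\mathfrak{p}_\mathfrak{p}$ is a zero-divisor of $R_\mathfrak{p}$.
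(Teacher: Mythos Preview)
Your proof is correct and follows essentially the same route as the paper: the same construction of $\mathfrak p$ via Lemma~\ref{adlemma3} from an $\omega_0$ produced by Lemmas~\ref{adlemma1}, \ref{llemma}, \ref{adlemma2}, and the same contradiction strategy for (iii) and (iv). The only cosmetic difference is in (iii): to obtain $D_{\mathfrak p}\subseteq cR_{\mathfrak p}$ for a hypothetical regular $c$, the paper quotes Theorem~\ref{thm2.3}(ii) (namely $D\subseteq Rc+(0:c)=Rc$), whereas you derive it from the Gaussian identity of Theorem~\ref{thm2.2}(iv)(a) together with cancellation of $c$; both arguments are equally short and valid.
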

\begin{proof} (i): By Lemma's \ref{llemma} and \ref{adlemma1}, there exists an  $a\in \mathfrak{m}\setminus D$
such that $\Tor_{n-1}(R/(aR+D), R/D)\neq 0$. Using Lemma \ref{adlemma2}, there exist a $\omega\in \Tor_{n-2}(R/D, R/D)$
such that $\omega\neq 0$ and $a\omega =0$. Set
$\mathfrak{p}= \{b\in \mathfrak{m} , b^q \omega=0 \;\text{for some}\; q\in \mathbb{N} \}$.
By Lemma \ref{adlemma3}, it follows that $\mathfrak{p}\in Spec (R)$. Moreover $a\in \mathfrak{p}$ and since $D$ is a prime ideal, $\frac{a}{1}\notin D_\mathfrak{p}$.
Hence $D_\mathfrak{p}\neq \mathfrak{p}_\mathfrak{p}$.
We claim that $\omega=\frac{\omega}{1}\in \Tor_{n-2}(R/D, R/D)_\mathfrak{p} =
\Tor^{R_\mathfrak{p}}_{n-2}(R_\mathfrak{p}/D_\mathfrak{p}, R_\mathfrak{p}/D_\mathfrak{p})$
is not trivial. If this is not the case, then there exist $b\in R\setminus \mathfrak{p}$ such that $b \omega =0$,
a contradiction. By definition, for any $c\in \mathfrak{p}_\mathfrak{p}$, there exist
$q\in \mathbb{N}$ such that $c^q\omega =0$.

(ii): By (i), $c\in \mathfrak{p}_\mathfrak{p}\setminus D_\mathfrak{p}$ is a zero-divisor on
$\Tor^{R_\mathfrak{p}}_{n-2}(R_\mathfrak{p}/D_\mathfrak{p}, R_\mathfrak{p}/D_\mathfrak{p})$.
Hence Lemma \ref{adlemma2} implies (ii).

(iii): If this is not the case, then there exists $c\in \mathfrak{p}_\mathfrak{p}\setminus D_\mathfrak{p}$
which is regular in $R_\mathfrak{p}$. Using Theorems \ref{thm2.1}(ii) and \ref{thm2.3}(ii), we obtain that
$D_\mathfrak{p} \subset cR_\mathfrak{p}+(0:c)=cR_\mathfrak{p}$. This implies that $cR_\mathfrak{p} =
cR_\mathfrak{p} + D_\mathfrak{p}$.
By (ii), $\Tor^{R_\mathfrak{p}}_{n-1}(R_\mathfrak{p}/cR_\mathfrak{p}, R_\mathfrak{p}/D_\mathfrak{p})\neq 0$.
On the other hand, we have the following free resolution of $R_\mathfrak{p}/cR_\mathfrak{p}$:
$0\rightarrow R_\mathfrak{p}\overset{c_m}{\rightarrow}R_\mathfrak{p}\overset{\pi}{\rightarrow}
R_\mathfrak{p}/cR_\mathfrak{p}\rightarrow 0$, where $c_m$ is multiplication by $c$ and
$\pi$ is the natural projection. This implies that $\wgd_{R_\mathfrak{p}}(R_\mathfrak{p}/cR_\mathfrak{p})\leq 1$,
but this is a contradiction since $n\geq 3$.

(iv): It suffices to check that $\wgd (R_\mathfrak{p})\geq n$. By (ii),
$\wgd_{R_\mathfrak{p}}(R_\mathfrak{p}/D_\mathfrak{p})\geq n-1$. Using Lemma \ref{adlemma1}, we obtain that
$\wgd (R_\mathfrak{p})\geq n$.
\end{proof}

Thus, if the Bazzoni-Glaz Conjecture is not true, then Lemma \ref{adlemma4} enables us to assume
the existence of a local Gaussian ring $(R, \mathfrak{m})$ with $\wgd(R)=n<\infty$,
satisfying the following conditions:

(C1) $D\neq \mathfrak{m}$ and $\Tor_{n-1}(R/(aR+D), R/D)\neq 0$ for all $a\in \mathfrak{m}\setminus D$.

\noindent Moreover, since $n\geq 3$, $\Tor_{n-2}(R/D, R/D)=\Tor_{n-3}(D, R/D)$ and
we also can assume that $R$ satisfies the following:

(C2) there exists an $\omega\in \Tor_{n-3}(D, R/D)$ such that $\omega\neq 0$ and for all
$a\in \mathfrak{m}$, $a^q\omega =0$ for some $q\in \mathbb{N}$.

\begin{lemma}\label{adlemma5} Let $\wgd (R)=n < \infty$. If $R$ satisfies (C1), then the following conditions hold.
\begin{itemize}
\item[(i)] $\mathfrak{m}$ is flat;
\item[(ii)] if $I=\mathfrak{m}, D$ or $(0:c)$ for any $c\in \mathfrak{m}$, then $I=I\mathfrak{m}$.
 Moreover, if $x\in I$, then there exist $x'\in I$ and $a\in \mathfrak{m}$ such that $x=ax'$;
 \item[(iii)] $\mathfrak{m}$ is not finitely generated modulo $D$.
 \end{itemize}
\end{lemma}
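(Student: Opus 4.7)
The plan is to deduce (i), then (iii), then (ii), with condition (C1) driving each step. For (i), I would first verify that $R$ admits Property~(\ref{hypo1}) and then invoke Lemma~\ref{lemma1.2}. Suppose for contradiction that Property~(\ref{hypo1}) fails, so that $(0:x)=aR+D$ for some $x\in D\setminus 0$ and $a\in\mathfrak{m}\setminus D$. Multiplication by $x$ gives $R/(aR+D)\cong xR\subset R$. Since $\wgd_R(R/D)=n-1$ by Lemma~\ref{adlemma1}, Lemma~\ref{lemma3} produces an inclusion $\Tor_{n-1}(R/(aR+D),R/D)\hookrightarrow\Tor_{n-1}(R,R/D)=0$, contradicting (C1). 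Hence Property~(\ref{hypo1}) holds and Lemma~\ref{lemma1.2} applies (its other hypotheses being guaranteed by the standing assumptions at the start of this section). This proves (i).

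For (iii), flatness of $\mathfrak{m}$ yields the length-one flat resolution $0\to\mathfrak{m}\to R\to R/\mathfrak{m}\to 0$, whence $\wgd_R(R/\mathfrak{m})\leq 1$, and since $n-1\geq 2$ we get $\Tor_{n-1}(R/\mathfrak{m},R/D)=0$. If $\mathfrak{m}/D$ were principal, then $\mathfrak{m}=aR+D$ for some $a\in\mathfrak{m}\setminus D$ (using $\mathfrak{m}\neq D$), and (C1) would force $\Tor_{n-1}(R/\mathfrak{m},R/D)\neq 0$, a contradiction. By Theorem~\ref{thm2.3}(i) together with $\mathcal{N}=D$ in this section, $R/D$ is a reduced local arithmetical ring, hence a valuation domain, in which finitely generated ideals are principal; thus $\mathfrak{m}/D$ is not finitely generated.

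For (ii), the structural input is that in the valuation domain $R/D$ any non-principal ideal $\bar J\subseteq\mathfrak{m}/D$ satisfies $\bar J=\bar J\cdot(\mathfrak{m}/D)$, and in fact each $\bar y\in\bar J$ factors as $\bar y=\bar z\,\bar u$ with $\bar z\in\bar J$ and $\bar u\in\mathfrak{m}/D$. I would treat the three ideals in turn. For $I=\mathfrak{m}$, (iii) supplies the factorization modulo $D$; lifting and applying Theorem~\ref{thm2.2}(v) to a finitely generated subideal containing both lifted factors and the arising $D$-error produces $x\in(c^2)$ for some $c\in\mathfrak{m}$, so $x=c\cdot(cr)$ provides the required $a,x'\in\mathfrak{m}$. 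For $I=D$, pick $x\in D\setminus 0$; since $x\in D\subseteq(0:x)$ and Property~(\ref{hypo1}) makes $(0:x)/D$ non-principal, the valuation-domain factorization applied inside $(0:x)/D$ yields $x=zu+d_1$ with $z\in(0:x)$, $u\in\mathfrak{m}$, and $d_1\in D$; using $D^2=0$ and Theorem~\ref{thm2.2}(iv)(b) to redistribute the nilpotent error, one arranges this as $x=ax'$ with $a\in\mathfrak{m}$ and $x'\in D$. The case $I=(0:c)$ is parallel, using $(0:c)\subseteq D$ by Theorem~\ref{thm2.3}(iii). The equality $I=I\mathfrak{m}$ follows immediately from the moreover clause.

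The main obstacle is in this last step for $I=D$ and $I=(0:c)$: the valuation-domain factorization lifts only up to a $D$-error term, and showing that this error can be absorbed into a single product $a\cdot x'$ with $x'$ still in the original ideal requires a careful bookkeeping using $D^2=0$ together with the Gaussian identities in Theorem~\ref{thm2.2}(iv)(a),(b) and (v). I expect this is where the detailed combinatorics of the proof lie, and where one must iteratively rewrite the error term using Property~(\ref{hypo1}) until it is absorbed into the desired product.
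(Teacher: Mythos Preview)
Your argument for (i) matches the paper's exactly, and your proof of (iii) is a valid alternative to the paper's (the paper instead uses $\mathfrak{m}=\mathfrak{m}^2$ to derive $a\in D$ directly, but your use of (C1) against $\Tor_{n-1}(R/\mathfrak{m},R/D)=0$ is cleaner).

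The real problem is (ii). Your lifting strategy via the valuation domain $R/D$ cannot work for $I=D$ or $I=(0:c)$ with $c\notin D$: every element $x$ of these ideals lies in $D$, so its image in $R/D$ is \emph{zero}, and the ``factorization'' $\bar x=\bar z\,\bar u$ is vacuous. Whatever $z,u$ you lift to, you recover only $x=zu+d_1$ with $zu\in D$ arbitrary and $d_1=x-zu$; nothing forces $d_1$ into $\mathfrak{m}I$. The ``detailed combinatorics'' you anticipate are not there to be found, because the valuation-domain input carries no information about elements of $D$. Even for $I=\mathfrak{m}$ your sketch is incomplete: from $x=zu+d$ with $d\in D$ it is not clear that $x\in(z,u,d)^2$, since $d$ need not lie in that square.

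The paper's route is entirely different and avoids all of this. From (i) one has $\wgd_R(R/\mathfrak{m})=1$, and then Lemmas~\ref{lemma5}, \ref{lemma4}, and \ref{lemma3} (applied with $M=R/\mathfrak{m}$) give $\Tor_1(R/\mathfrak{m},R/\mathfrak{m})=0$, $\Tor_1(R/D,R/\mathfrak{m})=0$, and $\Tor_1(R/(0:c),R/\mathfrak{m})\hookrightarrow\Tor_1(R,R/\mathfrak{m})=0$ respectively. Since $\Tor_1(R/I,R/\mathfrak{m})=I/I\mathfrak{m}$, this yields $I=I\mathfrak{m}$ immediately. The ``moreover'' clause then follows uniformly: write $x=\sum x_ia_i$ with $x_i\in I$, $a_i\in\mathfrak{m}$, let $J=(x_1,\dots,x_s,a_1,\dots,a_s)$, use Theorem~\ref{thm2.2}(ii) to get $J=aR+(0:J)$, write $a_i=ar_i+\lambda_i$ with $\lambda_i\in(0:J)$, and observe $x_i\lambda_i=0$, so $x=a\sum x_ir_i$.
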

\begin{proof} (i): As a result of Lemma \ref{lemma1.2}, it suffices to check that $R$ admits Property \ref{hypo1}.
If this is not the case, then there exists a $d\in D\setminus 0$ and $a\in \mathfrak{m}\setminus D$
such that $(0:d)=aR+D$. This implies that $R/(aR+D) \cong dR$. First using Lemma \ref{adlemma1} and then using Lemma \ref{lemma3}, we obtain that $\Tor_{n-1}(dR, R/D)= 0=\Tor_{n-1}(R/(aR+D), R/D)$, contradicting (C1).

(ii): By (i), we have that $\wgd_R(R/\mathfrak{m})=1$. Using Lemma's \ref{lemma5}, \ref{lemma4}
and \ref{lemma3} respectively with $I=\mathfrak{m}, D$ and $(0:c)$, we obtain that $\Tor_1(R/I,R/\mathfrak{m})=0$. Thus $I=I\mathfrak{m}$ because $\Tor_1(R/I,R/\mathfrak{m})=I/I\mathfrak{m}$. Consider any $x\in I$. We have $x=x_1a_1+\cdots+x_sa_s$, where $x_1,\cdots, x_s\in I$ and $a_1,\cdots,a_s\in \mathfrak{m}$.
Let $J$ be an ideal generated by $x_1,\cdots, x_s,a_1,\cdots,a_s$. By Theorem \ref{thm2.2}$(ii)$, there exists an
$a\in J$ such that $J=aR+(0:J)$. Set $a_i=ar_i+\lambda_i$, where $r_i\in R$ and
$\lambda_i\in (0:J)$ for all $1\leq i\leq s$. Then $x=x_1ar_1+\cdots +x_sar_s=ax'$ where
$x'=x_1r_1+\cdots +x_sr_s\in I$.

(iii): If this is not the case, then there exists an $a\in \mathfrak{m}\setminus D$ such that
$\mathfrak{m}=aR+D$. Flatness of $\mathfrak{m}$ implies that $\mathfrak{m}=\mathfrak{m}^2$. So $\mathfrak{m}=a^2 R + D$.
Hence $a = a^2r+d$, where $r\in R$ and $d\in D$. Thus $a(1-ar)\in D$, which implies that $a\in D$,
a contradiction.
\end{proof}

\begin{lemma}\label{adlemma6} Let $\wgd(R)=n<\infty$. If $R$ satisfies (C1) and (C2), then there exist
$\overline{b}\in \mathfrak{m}\setminus D$ with the following property:
for all $b\in \mathfrak{m}$ with $\overline{b}\in bR+D$, there exists $\omega_b\in \Tor_{n-3}(D, R/D)$
such that $b\omega_b\neq 0$ and $b^q\omega_b =0$ for some $q\in \mathbb{N}$.
\end{lemma}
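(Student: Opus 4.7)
The plan is to use the element $\omega_0$ furnished by (C2) together with the fact that, by Theorem \ref{thm2.3}$(i)$, $R':=R/D$ is a local arithmetical ring, so its ideals are totally ordered under inclusion. Throughout, $T:=\Tor_{n-3}(D,R/D)$ is naturally an $R'$-module via the $R'$-action arising from $D^2=0$. Let $I_0\subseteq \mathfrak{m}'$ denote the annihilator of $\omega_0$ in $R'$; by (C2), $\sqrt{I_0}=\mathfrak{m}'$. I would split into the cases $I_0\subsetneq \mathfrak{m}'$ and $I_0=\mathfrak{m}'$.

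If $I_0\subsetneq \mathfrak{m}'$, I would pick $\overline{b}\in \mathfrak{m}$ whose image $\overline{b}'$ lies in $\mathfrak{m}'\setminus I_0$; automatically $\overline{b}\in \mathfrak{m}\setminus D$. For any $b\in \mathfrak{m}$ with $\overline{b}\in bR+D$, the inclusion $(b')\supseteq (\overline{b}')$ in $R'$ combined with $\overline{b}'\notin I_0$ and the chain property of ideals forces $b'\notin I_0$, so $b\omega_0=b'\omega_0\neq 0$. Condition (C2) gives $b^{q}\omega_0=0$ for some $q$, so $\omega_b:=\omega_0$ works.

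If $I_0=\mathfrak{m}'$, then $\mathfrak{m}'\omega_0=0$, and $\omega_0$ sits in the socle $\{\omega\in T:\mathfrak{m}'\omega=0\}$. The delicate step here is to arrange $\omega_0\in \mathfrak{m}'T$, possibly after replacing $\omega_0$ by another element of $T_{\mathrm{nil}}:=\{\omega\in T:\sqrt{(0:\omega)}=\mathfrak{m}'\}$. Granting this, any finite expression $\omega_0=\sum_{i=1}^{k}b_i'\omega_i$ with $b_i'\in \mathfrak{m}'$ and $\omega_i\in T$ collapses to a single product $\omega_0=\overline{b}'\omega_0'$, where $\overline{b}'$ generates the largest principal ideal in the totally-ordered chain $\{(b_i')\}$ (and $\omega_0'\in T$ is the corresponding combination). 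Since $\omega_0\neq 0$, $\overline{b}'\neq 0$, and a lift $\overline{b}\in \mathfrak{m}\setminus D$ has the required property: for $b\in \mathfrak{m}$ with $\overline{b}\in bR+D$, writing $\overline{b}'=b'r$ in $R'$ and setting $\omega_b:=r\omega_0'$ yields $b\omega_b=\overline{b}'\omega_0'=\omega_0\neq 0$, while $b^{2}\omega_b=b\omega_0=0$ since $\mathfrak{m}'\omega_0=0$.

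The main obstacle is precisely the delicate step in the second case — arranging $\omega_0\in \mathfrak{m}'T$. If the first case cannot be reached for any $\omega_0\in T_{\mathrm{nil}}$, then $\mathfrak{m}'T_{\mathrm{nil}}=0$, so $T_{\mathrm{nil}}$ sits inside the socle, and one must still produce a nonzero element of $T_{\mathrm{nil}}\cap \mathfrak{m}'T$. I expect this last sub-argument to combine (C1), which guarantees $T[b]\neq 0$ for every $b\in \mathfrak{m}\setminus D$, with the non-finite generation of $\mathfrak{m}/D$ from Lemma \ref{adlemma5}$(iii)$, to rule out the pathological configuration in which $T_{\mathrm{nil}}$ would be an entirely ``top'' piece of $T$ disjoint from $\mathfrak{m}'T$.
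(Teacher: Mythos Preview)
Your Case~1 is clean and correct. The real issue is Case~2, and you have correctly identified it: you need $\omega_0\in\mathfrak{m}'T$ (or some nonzero element of $T_{\mathrm{nil}}\cap\mathfrak{m}'T$), and you do not prove this. Your hope that (C1) together with Lemma~\ref{adlemma5}$(iii)$ will rule out the bad configuration is only a hope; as stated it is not an argument, and I do not see how to complete it at the level of the abstract $R'$-module $T$.

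The paper's proof avoids this obstacle by refusing to work abstractly with $T$. It fixes a free resolution of $R/D$, so that $T=\Tor_{n-3}(D,R/D)$ is realised as $\Ker\overline{\sigma}\subseteq K\otimes D$ for a syzygy module $K$. One writes $\omega=\sum x_i\otimes d_i$ and uses Lemma~\ref{adlemma5}$(ii)$ (namely $D=D\mathfrak{m}$) to factor $d_i=a d_i'$ for a common $a\in\mathfrak{m}\setminus D$, obtaining $\omega=a\omega'$ \emph{at the chain level}. The point is that $\omega'\in K\otimes D$ need not lie in $T$; its image $\overline{\sigma}(\omega')$ has finitely many nonzero entries $\lambda_1,\dots,\lambda_t\in D$, all annihilated by $a$. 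A second application of Lemma~\ref{adlemma5}$(ii)$, this time to the ideals $(0:\lambda_j)$, produces $c$ with $c\lambda_j=0$ for all $j$ and $a=c\overline{b}$ for some $\overline{b}\in\mathfrak{m}\setminus D$. Then $\overline{\omega}:=c\omega'$ \emph{does} lie in $T$ and satisfies $\overline{b}\,\overline{\omega}=\omega$. From here your own endgame applies: for $b$ with $\overline{b}\in bR+D$, write $\overline{b}=br+d$ and set $\omega_b=r\overline{\omega}$. (When $n=3$ this collapses to $T=D$ and the first use of Lemma~\ref{adlemma5}$(ii)$ already suffices, which is essentially your Case~2 with the ``delicate step'' for free.)

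So the missing idea is precisely the passage through the chain-level lift $\omega'$ and the second invocation of Lemma~\ref{adlemma5}$(ii)$ on annihilator ideals; without it, the factorisation $\omega=\overline{b}\,\overline{\omega}$ inside $T$ is not available.
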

\begin{proof} If $n=3$, then $\Tor_{n-3}(D, R/D)=D\otimes (R/D)=D$. Thus we are given
$\omega\in D\setminus 0$ with the following property: for all $a\in \mathfrak{m}$
there exist $q\in \mathbb{N}$ such that $a^q\omega =0$. By Lemma \ref{adlemma5}$(ii)$,
there exists $\overline{b}\in \mathfrak{m}$ and $\overline{\omega}\in D$ such that
$\omega=\overline{b}\overline{\omega}$. Take any $b\in \mathfrak{m}$
such that $\overline{b}\in bR+D$. Set $\overline{b}=br+d$ where $r\in R$ and $d\in D$ and
let $\omega_b = r\overline{\omega}$. Then $b\omega_b=br\overline{\omega}=(br+d)\overline{\omega}=
\overline{b}\overline{\omega}=\omega\neq 0$. Moreover, according to (C2), there exists
$q\in \mathbb{N}$ such that $b^q\omega =0$, implying $b^{q+1}\omega_b=0$.
Hence the lemma is proved for $n=3$.

Set $n\geq 4$. Consider a free resolution of $R/D$:
$\cdots\xrightarrow{\partial_2}R^{X_1}\xrightarrow{\partial_1}R^{X_0}\xrightarrow{\partial_0} R/D$,
where $X_i$ are sets. Set $K =\Ker \partial_{n-4}$. Let $\sigma : K \rightarrow R^{X_{n-4}}$
be the natural inclusion and $\overline{\sigma}:K\otimes D \rightarrow D^{X_{n-4}}$
be the natural homomorphism obtained after tensoring by $D$.
Then $\Tor_{n-3}(D, R/D)=\Ker \overline{\sigma}$. Let $\omega$ be as in (C2).
Set $\omega = x_1\otimes d_1+\cdots+x_s\otimes d_s\in \Ker \overline{\sigma}$,
where $x_i\in K$ and $d_i\in D$ for all $1\leq i\leq s$. By Lemma \ref{adlemma5}$(ii)$,
there exists $a_i\in \mathfrak{m}\setminus D$ such that $d_i\in a_iD$ for all $1\leq i\leq s$.
Since $R/D$ is a local arithmetical ring, there exists an $a\in \{a_1,\cdots, a_s\}$ such that
$a_i\in aR+D$ for all $1\leq i\leq s$. Hence $d_i\in aD$ for all $1\leq i\leq s$.
Set $d_i=ad_i'$, where $d_i'\in D$ for all $1\leq i\leq s$ and
set $\omega'=x_1\otimes d_1'+\cdots+x_s\otimes d_s'$.
Observe that $a\omega'=\omega$, which implies that $a\overline{\sigma}(\omega')=0$. We have
$\overline{\sigma}(\omega')\in D^{X_{n-4}}$. Let $\lambda_1, \cdots, \lambda_t\in D$
be the finitely many non zero entries of $\overline{\sigma}(\omega')$.
Then $a\in (0:\lambda_i)$ for all $1\leq i\leq t$. By Lemma \ref{adlemma5}$(ii)$,
there exist $c_i\in (0:\lambda_i)$ such that $a\in c_i\mathfrak{m}$ for all
$1\leq i\leq t$. Since $R/D$ is a local arithmetical ring, there exist a
$c\in \{c_1,\cdots, c_t\}$ such that $c\in c_iR+D$ for all $1\leq i\leq t$.
Thus $c\in (0:\lambda_i)$ for all $1\leq i\leq t$ and $a\in c\mathfrak{m}$.
Set $\overline{\omega}=c\omega'$ and $a=c\overline{b}$, where $\overline{b}\in \mathfrak{m}\setminus D$.
Observe that $\overline{\omega}\in \Ker \overline{\sigma}$ and $\overline{b}\overline{\omega}=\omega\neq 0$.
Choose any $b\in \mathfrak{m}$ such that $\overline{b}\in bR+D$. Set $\overline{b}=br+d$
where $r\in R$ and $d\in D$ and set $\omega_b = r\overline{\omega}$. Then
$b\omega_b=br\overline{\omega}=(br+d)\overline{\omega}=\overline{b}\overline{\omega}=
\omega\neq 0$. Moreover, according to (C2), there exists $q\in \mathbb{N}$
such that $b^q\omega =0$, implying that $b^{q+1}\omega_b=0$.
\end{proof}

\begin{lemma}\label{adlemma7} Let $\wgd(R)=n<\infty$. If $R$ satisfies (C1) and (C2), then there exists a
$\mathfrak{p}\in Spec(R)$ such that the following conditions hold.

\begin{itemize}
\item[(i)] $D_\mathfrak{p}\neq \mathfrak{p}_\mathfrak{p}$ and each element of $\mathfrak{p}_\mathfrak{p}$
is a zero divisor. Furthermore, there exist $d\in D$ such that $\frac{d}{1}\in D_\mathfrak{p}$ is not
trivial and for any $c\in \mathfrak{p}_\mathfrak{p}$, $\frac{d}{1}c^q=0$ for some $q\in \mathbb{N}$;
\item[(ii)] for all $c\in \mathfrak{p}_\mathfrak{p}\setminus D_\mathfrak{p}$,
$\Tor^{R_\mathfrak{p}}_{n-1}(R_\mathfrak{p}/(cR_\mathfrak{p}+D_\mathfrak{p}), R_\mathfrak{p}/D_\mathfrak{p})\neq 0$;
\item[(iii)] $\wgd(R_\mathfrak{p})=n$.
\end{itemize}
\end{lemma}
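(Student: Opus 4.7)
The plan is to construct the prime $\mathfrak{p}$ together with the distinguished element $d\in D$ by combining (C1), (C2), and Lemma \ref{adlemma6}. The construction parallels that of Lemma \ref{adlemma4}, but now uses the refined element supplied by Lemma \ref{adlemma6} together with a passage from $\Tor_{n-3}(D,R/D)$ to $D$ itself.

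First I would apply Lemma \ref{adlemma6} with $b=\overline{b}$ to extract $\overline{b}\in\mathfrak{m}\setminus D$ and $\omega\in\Tor_{n-3}(D,R/D)$ with $\overline{b}\omega\neq 0$, $\overline{b}^{q}\omega=0$ for some $q\geq 2$, and---inherited from (C2) via the $R$-linearity of the construction---the property that for every $a\in\mathfrak{m}$ some power $a^{k}$ annihilates $\omega$.

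The next step is to extract $d\in D$ and define $\mathfrak{p}$. For $n=3$ this is immediate: under the standing assumption $D^{2}=0$ of Section~6, the identification $\Tor_{0}(D,R/D)=D\otimes_{R}R/D=D/D^{2}=D$ makes $\omega$ literally an element of $D$, so I would set $d=\omega$ and $\mathfrak{p}=\mathfrak{m}$, whereupon (i) reduces to (C2), (ii) reduces to (C1), and (iii) is the hypothesis $\wgd(R)=n$. For $n\geq 4$ I would work with an explicit representative $\omega=\sum_{i}x_{i}\otimes d_{i}'$ in the flat resolution used to compute $\Tor_{n-3}$, with $d_{i}'\in D$, and then choose $d$ to be one of the $d_{i}'$ (or a suitable $R/D$-linear combination thereof) such that the set $\mathfrak{p}=\{a\in\mathfrak{m}:a^{k}d=0\text{ for some }k\in\mathbb{N}\}$ contains $\overline{b}$ while keeping $d/1$ nonzero in $R_{\mathfrak{p}}$. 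Lemma \ref{adlemma3} will then give that $\mathfrak{p}$ is prime, and the facts $\overline{b}\in\mathfrak{p}$, $\overline{b}\notin D$ (combined with primality of $D$ from Theorem \ref{thm2.1}(iv)) will yield $D_{\mathfrak{p}}\neq\mathfrak{p}_{\mathfrak{p}}$.

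Once $\mathfrak{p}$ and $d$ are in hand, the remaining verifications parallel those of Lemma \ref{adlemma4}. For (ii) I would invoke Lemma \ref{adlemma2} inside $R_{\mathfrak{p}}$, using the long-exact-sequence identification $\Tor^{R_{\mathfrak{p}}}_{n-2}(R_{\mathfrak{p}}/D_{\mathfrak{p}},R_{\mathfrak{p}}/D_{\mathfrak{p}})\cong\Tor^{R_{\mathfrak{p}}}_{n-3}(D_{\mathfrak{p}},R_{\mathfrak{p}}/D_{\mathfrak{p}})$ together with the nontriviality of $\omega/1$ and the fact that each $c\in\mathfrak{p}_{\mathfrak{p}}\setminus D_{\mathfrak{p}}$ acts on it as a zero divisor by construction. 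The statement that every element of $\mathfrak{p}_{\mathfrak{p}}$ is a zero divisor follows exactly as in Lemma \ref{adlemma4}(iii) using Theorem \ref{thm2.3}(ii). Property (iii), $\wgd(R_{\mathfrak{p}})=n$, will then be immediate from (ii) combined with Lemma \ref{adlemma1} applied inside $R_{\mathfrak{p}}$, the reverse inequality $\wgd(R_{\mathfrak{p}})\leq n$ being automatic from localization. The hard part will be the case $n\geq 4$ of the extraction step: producing $d\in D$ from the tensor representation of $\omega$ so that $d$ simultaneously inherits the nilpotent-annihilator property from $\omega$ and remains nonzero in $R_{\mathfrak{p}}$ requires careful bookkeeping of the coefficients $d_{i}'$ and repeated use of the comparability of ideals modulo $D$ afforded by the arithmetical structure of $R/D$ and Lemma \ref{adlemma5}(ii).
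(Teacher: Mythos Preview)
Your $n=3$ case is fine (and in fact simpler than the paper's uniform treatment), but for $n\geq 4$ the proposal has a genuine gap at exactly the point you flag as ``the hard part'': extracting an element $d\in D$ from the tensor representative of $\omega\in\Tor_{n-3}(D,R/D)$ so that $d$ simultaneously (a) has $\overline{b}$ in its associated prime $\mathfrak{p}$ and (b) survives in $D_{\mathfrak{p}}$. You give no mechanism for this, and there is no obvious one---the coefficients $d_i'$ of $\omega$ need not individually inherit the annihilation property of $\omega$, nor is it clear that any combination of them does. A related issue is that you conflate the element $\omega$ of (C2) with the element $\omega_{\overline b}$ produced by Lemma~\ref{adlemma6}; the latter satisfies $\overline b\,\omega_{\overline b}\neq 0$ but is \emph{not} shown to inherit the ``every $a\in\mathfrak m$ eventually kills it'' property of (C2), so your plan to verify (ii) using a single element $\omega/1$ and its nonvanishing in $R_{\mathfrak p}$ is not justified as written.

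The paper sidesteps the extraction problem entirely. It takes $d$ to be \emph{any} nonzero element of $(0:\overline{b})$---which exists by the standing hypothesis that every element of $\mathfrak{m}$ is a zero divisor---and defines $\mathfrak{p}=\{a\in\mathfrak m: a^q d=0\text{ for some }q\}$. This immediately gives (i), since $\overline{b}\in\mathfrak{p}$ by construction and $d/1\neq 0$ by the usual localization argument. For (ii) the paper does \emph{not} rely on a single $\omega$: given $c=b/1$ with $b\in\mathfrak{p}\setminus D$, it splits on whether $\overline{b}\in bR+D$ or $b\in\overline{b}R+D$ (one of which holds since $R/D$ is arithmetical). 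In the first case it uses the element $\omega_b$ supplied by Lemma~\ref{adlemma6}, and shows $\omega_b/1\neq 0$ via the key observation that if $a\notin\mathfrak{p}$ killed $\omega_b$ then $b\in aR+D$ (comparability plus $b\in\mathfrak p$, $a\notin\mathfrak p$) would force $b\omega_b=0$, contradicting Lemma~\ref{adlemma6}. The second case reduces to the first applied to $\overline{b}$ itself. The moral is that the roles of $d$ (defining $\mathfrak{p}$ and witnessing (i)) and of the Tor-elements $\omega_b$ (witnessing (ii)) are completely decoupled; trying to make one element serve both purposes is what creates your difficulty.
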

\begin{proof}(i): Choose $\overline{b}\in \mathfrak{m}\setminus D$ as in Lemma \ref{adlemma6}.
Fix a non trivial element $d\in (0:\overline{b})$.  Set
$\mathfrak{p}= \{a\in \mathfrak{m} , a^q d=0 \;\text{for some}\; q\in \mathbb{N} \}$.
Using Lemma \ref{adlemma3}, it follows that $\mathfrak{p}\in Spec (R)$ and note that $\overline{b}\in \mathfrak{p}$.
Since $D$ is a prime ideal, $\frac{\overline{b}}{1}\notin D_\mathfrak{p}$. Hence
$D_\mathfrak{p}\neq \mathfrak{p}_\mathfrak{p}$. To prove the remaining part of (i), it suffices
to check that $\frac{d}{1}\neq 0$. If $\frac{d}{1}= 0$, then there exists an
$a\in R\setminus \mathfrak{p}$ such that $ad=0$, a contradiction.

(ii): Using Lemma \ref{adlemma2}, it suffices to check that any element
$c\in \mathfrak{p}_\mathfrak{p}\setminus D_\mathfrak{p}$ is a zero-divisor on
$\Tor^{R_\mathfrak{p}}_{n-2}(R_\mathfrak{p}/D_\mathfrak{p}, R_\mathfrak{p}/D_\mathfrak{p})$.
This is the same as showing that $c$ is a zero-divisor on
$\Tor^{R_\mathfrak{p}}_{n-3}(D_\mathfrak{p}, R_\mathfrak{p}/D_\mathfrak{p})$.
It suffices to consider the case $c=\frac{b}{1}$ where $b\in \mathfrak{p}\setminus D$.
Since $R/D$ is a local arithmetical ring, either $\overline{b}\in bR+D$ or $b\in \overline{b}R+D$, where $\overline{b}$ is defined as in (i).

{\bf Case 1.} $\overline{b}\in bR+D$.

By Lemma \ref{adlemma6}, there exists $\omega_b\in \Tor_{n-3}(D, R/D)$ such that
$b\omega_b\neq 0$ and $b^q\omega_b =0$ for some $q\in \mathbb{N}$. We claim that
$\frac{\omega_b}{1}\in \Tor_{n-3}(D, R/D)_\mathfrak{p} =\Tor^{R_\mathfrak{p}}_{n-3}(D_\mathfrak{p}, R_\mathfrak{p}/D_\mathfrak{p})$
is not trivial. If this is not the case, then there exists an $a\in R\setminus \mathfrak{p}$ such that
$a\omega_b=0$. Since $b\in \mathfrak{p}$, we obtain that $b\in aR+D$. Thus $b\omega_b=0$,
a contradiction. Hence we have a nontrivial element
$\frac{\omega_b}{1}\in \Tor^{R_\mathfrak{p}}_{n-3}(D_\mathfrak{p}, R_\mathfrak{p}/D_\mathfrak{p})$
such that $\frac{\omega_b}{1}b^q =0$ for some $q\in \mathbb{N}$. This shows that $b$
is a zero-divisor on $\Tor^{R_\mathfrak{p}}_{n-3}(D_\mathfrak{p}, R_\mathfrak{p}/D_\mathfrak{p})$.

{\bf Case 2.} $b\in \overline{b}R+D$.

Set $b=\overline{b}r+\lambda$, where $r\in R$ and $\lambda\in D$. As in the previous case, there exists
$\overline{\omega}\in \Tor^{R_\mathfrak{p}}_{n-3}(D_\mathfrak{p}, R_\mathfrak{p}/D_\mathfrak{p})$
such that $\overline{\omega}\neq 0$ and $\overline{b}\overline{\omega}=0$. Notice that
$b \overline{\omega} = \overline{b}r \overline{\omega}=0$, proving that $b$ is a zero-divisor on
$\Tor^{R_\mathfrak{p}}_{n-3}(D_\mathfrak{p}, R_\mathfrak{p}/D_\mathfrak{p})$.

(iii): It suffices to check that $\wgd (R_\mathfrak{p})\geq n$. By (ii),
$\wgd_{R_\mathfrak{p}}(R_\mathfrak{p}/D_\mathfrak{p})\geq n-1$. Using Lemma \ref{adlemma1}, we obtain that
$\wgd (R_\mathfrak{p})\geq n$.
\end{proof}

Thus, if the Bazzoni-Glaz conjecture is not true, then using Lemma \ref{adlemma7}, it is possible
to assume the existence of a local Gaussian ring $(R, \mathfrak{m})$ such that $\wgd (R)=n<\infty$
and satisfying the following conditions:

(C1) $D\neq \mathfrak{m}$ and for all $c\in \mathfrak{m}\setminus D$, $\Tor_{n-1}(R/(cR+D), R/D)\neq 0$;

(C3) there exists $d\in D$ such that $d\neq 0$ and for any $c\in \mathfrak{m}$,
$c^q d =0$ for some $q\in \mathbb{N}$.

We need one more Lemma.

\begin{lemma}\label{adlemma8} Let $\wgd (R)=n<\infty$. If $R$ satisfies (C1) and (C3), then there exists an
$a\in \mathfrak{m}\setminus D$ such that $\Tor_{n-1}(R/(a\mathfrak{m}+D), R/D)= 0$.
\end{lemma}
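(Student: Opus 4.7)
The plan is to imitate the direct-limit argument of Lemma \ref{lemma8}, now adapted to the setting where the nilradical is $D$ rather than $\mathfrak{m}$. Set $R' = R/D$; by Theorem \ref{thm2.3}(i) this is a valuation domain with maximal ideal $\mathfrak{m}' = \mathfrak{m}/D$, and $R/(a\mathfrak{m}+D) = R'/a'\mathfrak{m}'$ with $a' = a+D$. By Lemma \ref{adlemma5}(iii), $\mathfrak{m}'$ is not finitely generated; the same kind of argument shows $a'\mathfrak{m}'$ fails to be finitely generated for any $a\in \mathfrak{m}\setminus D$, and because $R'$ is a valuation domain, $a'\mathfrak{m}'$ is the directed union of its principal subideals $a'b'R'$ as $b'$ ranges over $\mathfrak{m}'$.

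First I would use (C3) to pick a good $a$. For a fixed $c \in \mathfrak{m}\setminus D$ let $q$ be minimal with $c^q d = 0$; since $D$ is prime, $c^q\in \mathfrak{m}\setminus D$, so $a := c^q$ lies in $\mathfrak{m}\setminus D$ and satisfies $ad = 0$. The intention is that this annihilation, together with the uniform ``power-annihilates-$d$'' property of (C3), is what will drive the vanishing in the direct limit.

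Next, since $\Tor$ commutes with direct limits,
\[
\Tor_{n-1}(R/(a\mathfrak{m}+D), R/D)=\varinjlim_{b'\in \mathfrak{m}'}\Tor_{n-1}(R/(abR+D), R/D),
\]
and by Lemma \ref{adlemma2} together with the vanishing $T_{n-1}=\Tor_{n-1}(R/D,R/D)=0$ (from Lemma \ref{adlemma1} and Lemma \ref{lemma4}), each term in this limit is identified with $(0:_{T_{n-2}}ab)$. For each $b \in \mathfrak{m}\setminus D$ I would locate $b'\in \mathfrak{m}$ with $b = b'r$, $r \in \mathfrak{m}$ (possible because $\mathfrak{m}=\mathfrak{m}^2$ by Lemma \ref{adlemma5}), and analyse the transition map via the short exact sequence
\[
0\to R/(rR+D)\to R/(abR+D)\to R/(ab'R+D)\to 0,
\]
aiming to show that $(0:_{T_{n-2}} ab)\to (0:_{T_{n-2}} ab')$ is zero, equivalently that the connecting map $\Tor_{n-1}(R/(rR+D),R/D)\to \Tor_{n-1}(R/(abR+D),R/D)$ is surjective.

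The main obstacle is this surjectivity. In the pattern of Lemma \ref{lemma7}(ii) it would follow from a double-annihilator identity of the form $(0:(0:z))=zR'$, which was established in Lemma \ref{lemma6} by using nilpotency. Here $R'$ is a domain, so elements of $\mathfrak{m}'$ are not nilpotent, and the needed input must come from (C3) instead: the distinguished element $d$ and the chosen $a$ satisfying $ad = 0$ should serve as the replacement for nilpotency, together with (C3)'s guarantee that each element of $\mathfrak{m}$ has some power in $(0:d)$, to produce the surjectivity in the $R$-level rather than in $R'$. Once that surjectivity is secured the direct-limit argument carries over from Lemma \ref{lemma8} almost verbatim and yields the desired vanishing; substituting this vanishing into the long exact sequence coming from $0\to R/\mathfrak{m}\to R/(a\mathfrak{m}+D)\to R/(aR+D)\to 0$ will then contradict (C1) and thereby complete the proof of Theorem \ref{thm5.5}.
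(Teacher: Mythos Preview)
Your overall strategy---write $\Tor_{n-1}(R/(a\mathfrak{m}+D),R/D)$ as a directed limit over principal subideals and show the transition maps are eventually zero---is exactly what the paper does. But the step you flag as ``the main obstacle'' really is the entire content of the lemma, and you have not supplied it. Two concrete issues:

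\medskip
\textbf{(1) Your choice of $a$ points the wrong way.} You take $a=c^{q}$ with $ad=0$, i.e.\ $d$ is \emph{annihilated} by $a$. The paper instead uses Lemma~\ref{adlemma5}(ii) twice to write $d=cd'$ and $c=ab$ with $a,b\in\mathfrak{m}\setminus D$, so that $d$ is \emph{divisible} by $a$ (indeed $d\in abR$). This divisibility is precisely what allows (C3) to be applied inside the ideal $ab_2R$ that appears later; with $ad=0$ there is no reason for $d$ to lie in the relevant principal ideals, and the mechanism below collapses.

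\medskip
\textbf{(2) The vanishing of the transition maps is obtained by a different device.} Given $I_1=b_1R+D\subsetneq I_2=b_2R+D$ (both containing $aR+bR+D$), write $b_1=xb_2+d_1$ with $d_1\in(0:b_2)\subseteq(0:a)$, so $ab_1=xab_2$. The paper then proves the strict inclusion $(0:ab_2)\subsetneq(0:ab_1)$: if equality held, the five lemma would force multiplication by $x$ to give an isomorphism $ab_2R\xrightarrow{\ \sim\ }ab_1R$; but $d=d'ab\in ab_2R$ (this uses the divisibility choice of $a$), and by (C3) some minimal power $x^{q}$ kills $d$, so $x^{q-1}d\neq 0$ lies in the kernel---a contradiction. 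The strict inclusion produces $\lambda\in D$ with
\[
aI_1+D\ \subset\ (0:\lambda)\ \subset\ aI_2+D,
\]
and since $R/(0:\lambda)\cong \lambda R\hookrightarrow R$, Lemma~\ref{lemma3} together with $\wgd_R(R/D)=n-1$ (Lemma~\ref{adlemma1}) gives $\Tor_{n-1}(R/(0:\lambda),R/D)=0$. Thus the transition map $\Tor_{n-1}(R/(aI_1+D),R/D)\to\Tor_{n-1}(R/(aI_2+D),R/D)$ factors through $0$, and the direct limit vanishes. Your proposed route via surjectivity of $\Tor_{n-1}(R/(rR+D),R/D)\to\Tor_{n-1}(R/(abR+D),R/D)$ amounts (under the identification with annihilators in $T_{n-2}$) to asking that multiplication by $r$ kill all of $(0:_{T_{n-2}}ab)$, which is strictly stronger and for which (C3) with your $a$ gives no leverage.
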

\begin{proof} Choose $d$ as in (C3). By Lemma \ref{adlemma5}$(ii)$, there exists $d'\in D$ and
$c\in\mathfrak{m}\setminus D$ such that $d=cd'$. Using the same lemma, there exists $a,b\in \mathfrak{m}\setminus D$
such that $c=ab$. Let $\mathcal{X}$ be the following class of ideals: $I\in \mathcal{X}$ iff
$aR+bR+D\subset I\subset \mathfrak{m}$ and $I/D$ is finitely generated. Then
$\Tor_{n-1}(R/(a\mathfrak{m}+D), R/D)=\lim_{I\in \mathcal{X}}\Tor_{n-1}(R/(aI+D), R/D)$.
Take any $I_1\in \mathcal{X}$. To prove the lemma it is sufficient to show that the canonical map
\newline $\Tor_{n-1}(R/(aI_1+D), R/D)\rightarrow \lim_{I\in \mathcal{X}}\Tor_{n-1}(R/(aI+D), R/D)$
is trivial. Since $I_1$ is finitely generated modulo $D$, there exist $b_1\in \mathfrak{m}\setminus D$
such that $I_1=b_1R+D$. By Lemma \ref{adlemma5}$(iii)$, $\mathfrak{m}$ is not finitely generated
modulo $D$. Thus there exists $I_2\in \mathcal{X}$ such that $I_1\subset I_2$ and $I_1\neq I_2$.
Set $I_2=b_2R+D$, where $b_2\in \mathfrak{m}\setminus D$. Using Theorem \ref{thm2.3}$(ii)$,
there exist $x\in \mathfrak{m}$ and $d_1\in (0:b_2)$ such that $b_1=xb_2+d_1$. Notice that
$a\in b_2R+D$, implying that $(0:b_2)\subset(0:a)$. Hence $a d_1=0$ and $ab_1=xab_2$.
Consider the following commutative diagram with exact rows:
$$
\xymatrix {
0 \ar[r] & (0:ab_2) \ar[d]^{inclusion}\ar[r] & R \ar[d]^{1_R}\ar[r]^{f_2} &ab_2R \ar[d]^{f_3}\ar[r] & 0\;\;\\
0 \ar[r] & (0:ab_1) \ar[r] & R \ar[r]^{f_1} & ab_1R \ar[r] & 0 \;,}
$$
where $f_1,f_2$ and $f_3$ are defined as multiplications by $ab_1, ab_2$ and $x$, respectively.

Claim: The natural inclusion $(0:ab_2)\subset (0:ab_1)$ is not onto.

If this is not the case, then the first two vertical homomorphisms of the above diagram are
isomorphisms. The short five lemma would imply that $f_3:ab_2R \rightarrow ab_1R$ is also an isomorphism. Since
$b\in I_2$, $b=b_2r+d_2$, where $r\in R$ and $d_2\in D$. So
$d=d'c=d'ab=d'a(b_2r+d_2)=d'ab_2r\in ab_2R$. This implies that $x^id\in ab_2R$ for all $i\geq 0$.
By (C3), there exists $q\in \mathbb{N}$ such that $x^qd=0$ and $x^{q-1}d\neq 0$. Then
$f_3(x^{q-1}d)=x^{q}d=0$. Hence $f_3$ is not injective, a contradiction.

Observing that $ab_1\in \mathfrak{m}\setminus D$, our claim implies that there exist a $\lambda \in D$ such that $\lambda \in (0:ab_1)$ and
$\lambda \notin (0:ab_2)$. This implies that
$ab_1R+D \subset (0:\lambda) \subset ab_2R+D$. Hence $aI_1+D \subset (0:\lambda) \subset aI_2+D$.
Thus we have natural projections $R/(aI_1+D) \rightarrow R/(0:\lambda) \rightarrow R/(aI_2+D)$.
Using Lemma's \ref{lemma3}, \ref{adlemma1} and the fact that $R/(0:\lambda)\cong \lambda R \subset R$, we obtain that $\Tor_{n-1}(R/(0:\lambda) ,R/D)=0$. Hence the natural map $\Tor_{n-1}(R/(aI_1+D) ,R/D)\rightarrow\Tor_{n-1}(R/(aI_2+D) ,R/D)$
is trivial, as it is the composition of the following natural maps
$$
\Tor_{n-1}(R/(aI_1+D) ,R/D)\rightarrow \Tor_{n-1}(R/(0:\lambda) ,R/D) \rightarrow\Tor_{n-1}(R/(aI_2+D) ,R/D).
$$
This implies that $\Tor_{n-1}(R/(aI_1+D), R/D)\rightarrow \lim_{I\in \mathcal{X}}\Tor_{n-1}(R/(aI+D), R/D)$
is also trivial.
\end{proof}

{\bf Proof of Theorem \ref{thm5.5}.} If the theorem is not true, then there exists
a local Gaussian ring $R$ which satisfies (C1) and (C2). Choose $a\in \mathfrak{m}\setminus D$
 prescribed by Lemma \ref{adlemma8}. Consider the following short exact sequence:
$$
0\rightarrow (aR+D)/(a\mathfrak{m}+D)\rightarrow R/(a\mathfrak{m}+D)\rightarrow R/(aR+D)\rightarrow 0.
$$
From the corresponding long exact sequence of Tor groups, consider the segment:
$$
\Tor_{n-1}(R/(a\mathfrak{m}+D), R/D)\rightarrow \Tor_{n-1}(R/(aR+D), R/D)\rightarrow
$$
$$
\rightarrow\Tor_{n-2}((aR+D)/(a\mathfrak{m}+D), R/D).
$$
There is an isomorphism $(aR+D)/(a\mathfrak{m}+D)\cong R/\mathfrak{m}$. Moreover, Lemma
\ref{adlemma5}$(i)$ implies that $\wgd_R(R/\mathfrak{m})=1$. Applying Lemma \ref{lemma4} and noting that $n\geq 3$,
we obtain that
$$\Tor_{n-2}((aR+D)/(a\mathfrak{m}+D), R/D)=0.$$
Now using the above exact sequence we obtain that $\Tor_{n-1}(R/(aR+D), R/D)=0$, but this contradicts (C1).

\end{document}